\documentclass{amsart}

\usepackage{xcolor}
\usepackage{mathtools}
\usepackage{amsmath}
\usepackage{amssymb}
\usepackage{amsthm}
\usepackage{enumitem}
\usepackage[colorlinks, citecolor=blue, linkcolor=red]{hyperref}
\usepackage[noabbrev,capitalize]{cleveref}

\newtheorem{theorem}{Theorem}[section]
\newtheorem{proposition}[theorem]{Proposition}
\newtheorem{lemma}[theorem]{Lemma}
\newtheorem{corollary}[theorem]{Corollary}

\newtheorem{meta}[theorem]{\textbf{Meta-Theorem}}
\newtheorem{cor}[theorem]{\textbf{Corollary}}
\theoremstyle{remark}
\newtheorem{rem}[theorem]{\textbf{Remark}}
\newtheorem{exe}[theorem]{\textbf{Example}}

\newcommand{\KN}{\mathbin{\bigcirc\mspace{-15mu}\wedge\mspace{3mu}}}

\def\sideremark#1{\ifvmode\leavevmode\fi\vadjust{\vbox to0pt{\vss
			\hbox to 0pt{\hskip\hsize\hskip1em
				\vbox{\hsize3cm\tiny\raggedright\pretolerance10000
					\noindent #1\hfill}\hss}\vbox to8pt{\vfil}\vss}}}

\numberwithin{equation}{section}

\begin{document}
	
	\title[Four Dimensional Hypersurfaces in space forms]{Topological and rigidity results for four-dimensional hypersurfaces in space forms}
	
	\keywords{minimal hypersurfaces, Riemannian four-manifolds, Gauss--Bonnet formula, Weyl tensor}
	\subjclass[2020]{Primary 53C42; Secondary 53A07, 53B20, 53C24, 53C40}
	
	\maketitle

	\begin{center}
		\textsc{\textmd{ Davide Dameno \footnote{Dipartimento di Matematica ``Giuseppe Peano", Universit\`{a} degli Studi di Torino, Via Carlo Alberto 10, 10123 Torino, Italy.
					E-mail: davide.dameno@unito.it.}, Aaron J. Tyrrell \footnote{Department of Mathematics, University of Notre Dame, Notre Dame, IN 46556, USA. E-mail: atyrrell@nd.edu}.}}
	\end{center}
	
	\begin{abstract}
		Exploiting the special features of four-dimensional Riemannian geometry, we derive topological and rigidity results for hypersurfaces immersed in space forms of dimension 5. First, we provide a complete description of the Weyl tensor for four-dimensional hypersurfaces, by means of which we derive a new characterization result for isoparametric hypersurfaces; then, we prove sharp topological bounds on the Weyl functional for closed, minimal hypersurfaces, involving the Euler characteristic in the case of an ambient space with constant non-negative sectional curvature. Then, inspired by a famous conjecture by Chern and the so-called second pinching problem, we find estimates for the norm of the second fundamental form in terms of the Euler characteristic in the minimal, constant scalar curvature case, under a cross-sectional area assumption. Finally, we prove some rigidity results by means of integral inequalities on the derivatives of the second fundamental form, also dealing with special curvature conditions, such as half harmonic Weyl curvature and Bach-flatness. We also extend some of the local results to the case of a locally conformally flat 5-dimensional ambient space.
		
	\end{abstract}
		
		\maketitle

		\section{Introduction and main results}
		\label{sec:intro}
		Submanifold theory has been a pivotal research topic in many branches of mathematics and physics for several decades, as a natural extension of the theory of surfaces: in particular, the theory of hypersurfaces has been investigated and exploited in countless areas of differential geometry and geometric analysis. 
		
		In this paper, we are interested in isometrically immersed hypersurfaces carrying additional structures, both from a geometric and topological viewpoint. Over the past century, many authors have addressed the problem of characterizing and classifying immersed hypersurfaces whose induced metrics are endowed with special properties: for instance, extensive work has been done on hypersurfaces equipped with locally conformally flat metrics \cite{docarmodaj, docarmomercuri, NiskikawaMaeda1974} and Einstein metrics \cite{Fialkow1938, ryan} or their generalizations \cite{lawson}. 
		
		A fundamental subclass of hypersurfaces is given by the \emph{minimal} ones, i.e. critical points of the area functional for compactly supported variations, or, equivalently, hypersurfaces whose mean curvature $H$ identically vanishes. A famous problem regarding immersed minimal hypersurfaces is the so-called \emph{Chern conjecture} (we refer the reader to the useful surveys \cite{ge2010chern, scherfner2012review}): in its first formulation \cite{chern, cherndocarmokob}, motivated by a celebrated result due to Simons \cite{simons}, the conjecture asserts that, given a closed, minimal hypersurface $M^n$ isometrically immersed in the standard sphere $\mathbb{S}^{n+1}$, if the induced metric on $M$ has constant scalar curvature $R$, then the set of possible values of $R$ is discrete. As a simple consequence of Gauss equations, if the ambient space is a space form, the scalar curvature is constant if and only if the squared norm of the second fundamental form is constant. 
		
		The conjecture can be stated in a stronger version: indeed, up to now, the only known examples of minimal hypersurfaces with constant scalar curvature in spheres are \emph{isoparametric}, i.e their principal curvatures are constant functions. The theory of isoparametric hypersurfaces was first developed by Cartan \cite{cartan38, cartan, cartan40}: later, M\"{u}nzner \cite{munzner} showed that, if the ambient space is the standard sphere, then the number $m$ of distinct principal curvatures can only be $1,2,3,4$ or $6$, which implies that, if the hypersurface is minimal, $R=n(n-m)$ (see also
		\cite{pengterng} for further details).
		In the minimal case, for instance, if $m=2$, we obtain the so-called \emph{Clifford hypersurfaces}
		\[
		\mathbb{S}^k\left(\sqrt{\dfrac{k}{n}}\right)\times \mathbb{S}^{n-k}\left(\sqrt{\dfrac{n-k}{n}}\right),
		\]
		while, for $m=3$, we find \emph{Cartan's minimal hypersurfaces};
		for the sake of completeness, we refer the reader to the beautiful surveys 
		\cite{cecilryan, chi} and the references therein for a detailed discussion about the classification of isoparametric hypersurfaces in spheres.  
		With this in mind, the strong Chern conjecture can therefore be stated as follows: every closed, minimal hypersurface with constant scalar curvature in the standard sphere must be isoparametric. 
		Up to now, the strong version of the conjecture (and, therefore, the weak one) was fully proven for minimal hypersurfaces in $\mathbb{S}^4$ by Peng, Terng \cite{pengterngmathann} and Chang \cite{changJDG}, who later extended the result to the \emph{constant mean curvature} (CMC) case \cite{changCAG}. In higher dimensions, the conjecture is still open, although some partial results have been obtained under additional conditions (see, e.g., the survey \cite{scherfner2012review} and the recent work \cite{spruck}). 
		
		In this paper, we focus on hypersurfaces immersed into five-dimensional space forms: the strategy is to exploit the unique features of four-dimensional Riemannian Geometry in order to obtain rigidity and topological results for these hypersurfaces. One of these features is the splitting of the bundle of $2$-forms into two subbundles $\Lambda_+$ and $\Lambda_-$ induced by the Hodge operator, which implies the decomposition of the \emph{Weyl tensor} $\operatorname{W}$, the trace-free, conformally invariant part of the Riemann tensor, into a self-dual and an anti-self-dual part, denoted respectively by $\operatorname{W}^+$ and $\operatorname{W}^-$; this allows one to write the Riemann curvature operator in an elegant and useful block form, where $\operatorname{W}^{\pm}$ appear as linear, symmetric operators $\mathcal{W}^{\pm}$ on the subbundles $\Lambda_{\pm}$ (see e.g \cite{besse}). 
		We say that a Riemannian four-manifold $(M^4,g)$ is \emph{half conformally flat} if either $\operatorname{W}^+$ or $\operatorname{W}^-$ vanish
		identically: in particular, $(M^4,g)$ is \emph{self-dual} (respectively, \emph{anti-self-dual}) if $\operatorname{W}^-\equiv 0$ 
		(respectively, if $\operatorname{W}^+\equiv 0$).
		Furthermore, if the manifold is also closed, the Euler characteristic $\chi(M)$ can be
		expressed \emph{via} the celebrated \emph{Chern--Gauss--Bonnet formula}, which can be written as
		\[
		32\pi^2\chi(M)=\int_M\left(\lvert\operatorname{W}\rvert^2-2\lvert\mathring{\operatorname{Ric}}\rvert^2+\dfrac{R^2}{6}\right)dV_g,
		\]
		where $\mathring{\operatorname{Ric}}$ is the traceless Ricci tensor and $R$ is the scalar curvature;
		this equation leads to strong topological results, for instance in the Einstein case \cite{hitchin}. We point out that, of course, the previous integral is conformally invariant and, in fact, the last two terms of the integrand define the second elementary symmetric function of the eigenvalues of the \emph{Schouten tensor}, which appears in the definition of the so-called $Q$-\emph{curvature} \cite{branson2008origins, chang2008q, fefferman2001q}. 
		
		In the context of four-dimensional hypersurfaces, the Weyl tensor and the Chern--Gauss--Bonnet formula will be the main ingredients of this paper: indeed, since the ambient spaces we consider are space forms $N^5(c)$ with constant sectional curvature $c$, the key observation is that both the Weyl tensor and the Chern--Gauss--Bonnet formula can be expressed in purely extrinsic terms, meaning that they only depend on the second fundamental form $A$
		(see \eqref{weylconst}, \eqref{weylplusminushyper}, \eqref{cherngaussbonnethyper}). In particular, the Chern--Gauss--Bonnet formula has the form
		\[
		32\pi^2\chi(M)=
		\int_M\left[
		3S^2-
		6\lvert A^2\rvert^2-
		\dfrac{16}{3}H^2S+H^4+8H\operatorname{tr}(A^3)+
		4c(6c-S+H^2)\right]dV_g,
		\] 
		where $S:=\lvert A\rvert^2$, $\lvert A^2\rvert^2$ is the squared norm of $A\circ A$ and $\operatorname{tr}(A^3)$ is the trace of 
		$A\circ A\circ A$.
		These considerations allow us to directly analyse the Weyl tensor, both locally and globally, and to exploit the integral formula for the Euler characteristic in order to derive topological conclusions and classification results. In particular, we make use of the extrinsic expressions of the traceless Ricci tensor and the Weyl tensor, which imply two key sharp inequalities that can be summarized as follows
		\[
		\dfrac{1}{4}S^2\leq\lvert A^2\rvert^2\leq \dfrac{7}{12}S^2:
		\]
		we point out that the right-hand side is a particular case of a more general algebraic inequality holding for symmetric, trace-free matrices obtained by Case and the second author in \cite{case2023sharp}.

		The paper is organised as follows: in Section \ref{prelim}, we provide a full description of the curvature components of a four-dimensional hypersurface in a space form and, in particular, we derive the explicit expression for the splitting of the Weyl tensor in this setting. This allows us to prove some local statements concerning the hypersurface, which remain true under ambient conformal changes. Hence, we may assume that, for these local results, the ambient space is a locally conformally flat manifold instead of a space form: indeed, we perform all the computations in the case of
		an ambient space form, for the sake of simplicity, and then we will use the conformal invariance of the statements to extend them
		to the locally conformally flat case (for further details, we 
		refer the reader to the discussion before Theorem \ref{metathm}).
		The first result we prove is the following:
		
		\begin{theorem} \label{equalweylnormslcf}
			Let $(M^4,g)$ be an oriented, isometrically immersed hypersurface in a locally conformally flat manifold $(N^5,g_N)$. Then, at every $p\in M$,        
			\begin{equation*}
				\lvert\operatorname{W}^+\rvert^2=
				\lvert\operatorname{W}^-\rvert^2=
				\dfrac{1}{2}\lvert\operatorname{W}\rvert^2.
			\end{equation*}
		\end{theorem}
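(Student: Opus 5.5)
The idea is to reduce everything to an algebraic statement about the Weyl part of $\tfrac12 A\KN A$. Then I would exploit the fact that this tensor is diagonal on the basis of $2$-forms built from a principal frame.

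\textbf{Step 1 (extrinsic form of $\operatorname{W}$).} Since $(N^5,g_N)$ is locally conformally flat, its Weyl tensor vanishes. Hence its Riemann tensor is $\overline{P}\KN g_N$, with $\overline{P}$ the ambient Schouten tensor. The Gauss equation gives
\[
\operatorname{Rm}_M=\big(\overline{P}|_{TM}\big)\KN g+\tfrac12\,A\KN A .
\]
The first term is of the form (symmetric $2$-tensor)$\KN g$, so it is entirely absorbed in the Ricci/Schouten part of $\operatorname{Rm}_M$. Therefore $\operatorname{W}$ is the totally trace-free part of $\tfrac12 A\KN A$. This agrees with the space-form computation the paper carries out in Section~\ref{prelim}. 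The identity is pointwise, so no global assumption is needed.

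\textbf{Step 2 (diagonalization).} At $p\in M$, I choose an orthonormal frame $\{e_i\}$ diagonalizing $A$, with $A e_i=\lambda_i e_i$. Viewed as an operator on $\Lambda^2$, the tensor $\tfrac12 A\KN A$ acts diagonally on the basis $\{e_i\wedge e_j\}$ with eigenvalues $\lambda_i\lambda_j$. Its Ricci contraction is diagonal in the same frame, so every tensor $h\KN g$ obtained from it is also diagonal on $\{e_i\wedge e_j\}$. Consequently there are numbers $w_{ij}$ with
\[
\mathcal{W}(e_i\wedge e_j)=w_{ij}\,e_i\wedge e_j .
\]

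\textbf{Step 3 (using $\mathcal W\circ *=*\circ\mathcal W$).} In dimension four the Weyl operator commutes with the Hodge star, i.e.\ it preserves $\Lambda_\pm$. I use the bases $\omega^\pm_1=(e_{12}\pm e_{34})/\sqrt2$ of $\Lambda_\pm$, and analogously $\omega^\pm_2$ from $(13,42)$ and $\omega^\pm_3$ from $(14,23)$. One computes
\[
\mathcal W\omega_1^+=\tfrac{w_{12}+w_{34}}{2}\,\omega_1^+ +\tfrac{w_{12}-w_{34}}{2}\,\omega_1^- .
\]
Preservation of $\Lambda_+$ then forces $w_{12}=w_{34}$. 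In the same way $w_{13}=w_{24}$ and $w_{14}=w_{23}$. It follows that $\mathcal W^+$ and $\mathcal W^-$ are both diagonal with the same eigenvalues $w_{12},w_{13},w_{14}$. Hence $\lvert\operatorname{W}^+\rvert^2=\lvert\operatorname{W}^-\rvert^2$, and both equal $\tfrac12\lvert\operatorname{W}\rvert^2$ because $\lvert\operatorname{W}\rvert^2=\lvert\operatorname{W}^+\rvert^2+\lvert\operatorname{W}^-\rvert^2$.

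\textbf{Expected obstacle.} The only delicate point is Step 1: one must check that the tangential part of the ambient curvature contributes nothing to the Weyl tensor of $M$ when $N$ is merely locally conformally flat. This holds because the restriction of $\overline{P}\KN g_N$ to $TM$ is again of the form $h\KN g$. Alternatively, one can prove the statement for space forms and invoke conformal invariance: $\lvert\operatorname{W}^\pm\rvert^2$ both rescale by the same factor, and the statement is local.

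One must also make sure the norm conventions for $\lvert\operatorname{W}^\pm\rvert$ match those of $\lvert\operatorname{W}\rvert$ up to a common factor. Any common factor is harmless for the equality $\lvert\operatorname{W}^+\rvert^2=\lvert\operatorname{W}^-\rvert^2$.
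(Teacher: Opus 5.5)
Your proof is correct, but it takes a different route from the paper's. The paper proves the space-form case by brute force. It writes $W^{\pm}_{ijkl}$ extrinsically via the Levi-Civita symbol, as in \eqref{weylplusminushyper}. A long contraction using \eqref{symbolcontraction} and \eqref{volformcontractsymm} then produces the explicit formula \eqref{weylplusminusnormhyper}, $\lvert\operatorname{W}^{\pm}\rvert^2=\frac{7}{6}S^2+\frac16H^4-2\lvert A^2\rvert^2+2H\operatorname{tr}A^3-\frac43H^2S$, which turns out to be the same for both signs. The locally conformally flat case is then obtained in Meta-Theorem \ref{metathm}: the ambient metric is changed locally to a flat one, and the conformal invariance of $\operatorname{W}$ is used.

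Your argument is structural instead. In a principal frame, $\mathcal W$ is diagonal on $\{e_i\wedge e_j\}$. Commutation with $\star$ then forces $w_{12}=w_{34}$, $w_{13}=w_{24}$, $w_{14}=w_{23}$; for a diagonal $\mathcal W$ these relations also follow directly from the four trace conditions $\sum_{j\neq i}w_{ij}=0$. Hence $\mathcal W^+$ and $\mathcal W^-$ are isospectral at every point. This is stronger than equality of norms, and the paper only records it afterwards, in \eqref{weyleigen}, for use in Theorem \ref{eigentheorem}. Your Step 1 also treats a locally conformally flat ambient directly: the tangential part of $\bar P\KN g_N$ has no Weyl component, so no conformal change is needed.

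What your route does not give is the explicit extrinsic expression for $\lvert\operatorname{W}^{\pm}\rvert^2$. The paper needs that expression anyway for the extrinsic Chern--Gauss--Bonnet formula \eqref{cherngaussbonnethyper}, for the pinching \eqref{ineq}, and for all of Sections 3 and 4. For the theorem itself, your argument is shorter and explains why the equality holds: any algebraic Weyl tensor that is diagonal on the 2-forms of some orthonormal frame has $\lvert\operatorname{W}^+\rvert=\lvert\operatorname{W}^-\rvert$.
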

		In the closed case, we recall that, by Poincaré duality, 
		the cup product on $2$-forms defines the so-called \emph{intersection form} as 
		the symmetric, bilinear form
		\begin{align*}
			H^2(M,\mathbb{R})\times H^2(M,\mathbb{R})&\longrightarrow\mathbb{R}\\
			([\omega_1],[\omega_2])&\longmapsto \int_M\omega_1\wedge\omega_2.
		\end{align*}
		Hence, one can consider the \emph{signature} $\tau(M)$ of the associated quadratic form,
		whose absolute value $\lvert\tau(M)\rvert$ is a topological invariant (it is easy to show that 
		$\tau(M)$ changes sign under orientation reversing diffeomorphisms). The
		decomposition $\Lambda^2=\Lambda_+\oplus\Lambda_-$ given by the action of the
		Hodge operator also induces a splitting of $H^2(M,\mathbb{R})$ into
		$H_+^2(M,\mathbb{R})$ and $H_-^2(M,\mathbb{R})$, which are
		the spaces of self-dual and anti-self-dual harmonic $2$-forms,
		respectively: this implies that $b_2=b_2^++b_2^-$, where $b_2$ is the
		second Betti number and 
		$b_2^{\pm}$ is the dimension of $H_{\pm}^2(M,\mathbb{R})$ as 
		a real vector space, and a simple observation shows that
		$\tau(M)=b_2^+-b_2^-$.
		Furthermore, by the well-known Hirzebruch's signature formula, on every closed 
		Riemannian four-manifold $(M,g)$ we have
		\begin{equation} \label{hirze}
			48\pi^2\tau(M)=\int_M\lvert\operatorname{W}^+\rvert^2-\lvert\operatorname{W}^-\rvert^2dV_g.
		\end{equation}
		By these considerations, Theorem \ref{equalweylnormslcf} and Poincaré duality immediately imply the following:
		\begin{cor} \label{signaturehyperlcf}
			Let $(M^4,g)$ be a closed, oriented, isometrically immersed hypersurface in a locally conformally
			flat manifold $(N^5,g_N)$. Then, 
			$\tau(M)=0$. Moreover, $\chi(M)$ is an even integer.
		\end{cor}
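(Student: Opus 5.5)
The plan has two parts: the signature statement comes directly from Hirzebruch's formula, and the parity of $\chi(M)$ from Poincaré duality.

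\emph{Signature.} Since $M$ is closed and oriented, Hirzebruch's signature formula \eqref{hirze} applies. By Theorem \ref{equalweylnormslcf}, the pointwise identity $\lvert\operatorname{W}^+\rvert^2=\lvert\operatorname{W}^-\rvert^2$ holds at every $p\in M$, so the integrand in \eqref{hirze} vanishes identically. Hence $48\pi^2\tau(M)=0$, that is, $\tau(M)=0$. This step needs nothing beyond the theorem.

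\emph{Parity of $\chi(M)$.} We use $\tau(M)=b_2^+-b_2^-=0$. This gives $b_2=b_2^++b_2^-=2b_2^+$, so $b_2$ is even. By Poincaré duality on the closed oriented four-manifold, $b_0=b_4$ and $b_1=b_3$. Therefore
\[
\chi(M)=b_0-b_1+b_2-b_3+b_4=2b_0-2b_1+2b_2^+,
\]
which is even. If $M$ is not assumed connected, $b_0$ is simply the number of components, and the formula is unchanged.

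\emph{Main obstacle.} There is essentially none once Theorem \ref{equalweylnormslcf} is available; all the geometric content is in that theorem. The only points to check are that Hirzebruch's formula is used with the orientation given in the hypothesis, and that the identity $\tau=b_2^+-b_2^-$ comes from the Hodge decomposition of harmonic $2$-forms on a closed manifold. Both are standard.
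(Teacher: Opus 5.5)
Your proposal is correct and follows the paper's argument. The paper also gets $\tau(M)=0$ by inserting the pointwise identity $\lvert\operatorname{W}^+\rvert^2=\lvert\operatorname{W}^-\rvert^2$ of Theorem \ref{equalweylnormslcf} into Hirzebruch's formula \eqref{hirze}, and then obtains $\chi(M)=2(b_0-b_1+b_2^+)$ from $b_2=2b_2^+$ and Poincaré duality (Corollaries \ref{signaturehyper} and \ref{eulercor}, carried over to the locally conformally flat setting via Meta-Theorem \ref{metathm}).
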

		\begin{exe}\label{CP2}
			We know that the complex projective plane 
			$\mathbb{CP}^2$ admits a K\"{a}hler-Einstein metric (the well-known
			\emph{Fubini--Study metric} $g_{FS}$). With the standard orientation, 
			$(\mathbb{CP}^2,g_{FS})$ is a self-dual manifold: moreover, Kuiper's Theorem \cite{kuiper} implies that $\mathbb{CP}^2$ does not admit any locally conformally flat metric, which
			means that, by \eqref{hirze}, $\tau(\mathbb{CP}^2)>0$. Furthermore, $\chi(\mathbb{CP}^2)=3$:
			by Corollary \ref{signaturehyperlcf}, $\mathbb{CP}^2$ 
			cannot be isometrically immersed in any $5$-dimensional
			locally conformally flat manifold. Another well-known example of manifolds which cannot be isometrically immersed in
			any $5$-dimensional locally conformally flat space is provided by K3 surfaces: indeed, they are all diffeomorphic to one another
			and they satisfy $\chi(K3)=24$ and $\lvert\tau(K3)\rvert$=16 (we recall that any such manifold admits a Ricci-flat, anti-self-dual metric, with respect to the orientation induced by the hyperk\"{a}hler structure, whose existence is guaranteed by Yau's resolution of the Calabi conjecture).
		\end{exe}
		
		We point out that the vanishing of $\tau(M)$ can also be proven in a more general setting. As explained in Remark \ref{pontrjagin}, 
		$\tau(M)=\frac{1}{3}p_1(M)$, where $p_1(M)$ is the first Pontrjagin class of $TM$: under the hypothesis of orientability of the ambient space
		and the hypersurface, if the first Pontrjagin class of the tangent bundle of the ambient space vanishes then $\tau(M)$ must be zero,
		showing that Corollary \ref{signaturehyperlcf} holds in a more general setting. However, we highlight the fact that Theorem \ref{equalweylnormslcf} shows the pointwise vanishing of the smooth function $\lvert\operatorname{W}^+\rvert^2-\lvert\operatorname{W}^-\rvert^2$ for hypersurfaces immersed in locally
		conformally flat spaces: up to now, it is not known if a closed, smooth manifold with vanishing
		signature admits a Riemannian metric such that $\lvert\operatorname{W}^+\rvert^2-\lvert\operatorname{W}^-\rvert^2\equiv 0$ (see Remark
		\ref{remavez}).
		
		The analysis of the Weyl operators $\mathcal{W}^{\pm}$ allows us to relate the number $m$ of distinct eigenvalues of $A$ and the number $w$ of distinct eigenvalues of $\mathcal{W}^{\pm}$: furthermore, we find a characterization of the isoparametric condition in terms of the eigenvalues of $\mathcal{W}^{\pm}$. The study of the eigenvalues of the (anti-)self-dual Weyl operator 
		has been addressed for many years and it lead to many interesting
		results in the context of four-dimensional Riemannian geometry 
		(see e.g. \cite{catinomastrimrn, Derdzinski, lebrunconfkahl, PengWu}). In our setting, we provide the following result:
		\pagebreak
		\begin{theorem} \label{eigentheoremlcf}
			Let $(M^4,g)$ be an oriented
			hypersurface isometrically immersed in a locally conformally flat manifold $(N^5,g_N)$:
			\begin{enumerate}
				\item 
				if $m$ is the number
				of distinct principal curvatures 
				and $w$ is the number of distinct
				eigenvalues of $\mathcal{W}^+$, then,
				at $p\in M$
				\begin{itemize}
					\item $w=3$
					if and only if $m=4$;
					\item $w=2$ if and only if 
					$m=3$ or there are two distinct principal curvatures, both with multiplicity two (in this case, $m=2$);
					\item $w=1$ if and only if 
					at least three principal curvatures are equal (and, hence, $m\leq 2$). In this case,
					$\operatorname{W}^+\equiv 0$
					at $p$;
				\end{itemize}
				\item let $(N^5,g_N)$ be a space form with constant sectional curvature $c$. If the scalar curvature $R$ and
				the mean curvature of $(M^4,g)$ are
				constant, then $(M^4,g)$ is
				isoparametric if and only if
				the eigenvalues of 
				$\mathcal{W}^+$ are
				constant functions. 
			\end{enumerate}
			The same results hold for $\mathcal{W}^-$.
		\end{theorem}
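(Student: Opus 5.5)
Since the Weyl tensor is conformally invariant, and so are umbilicity and the multiplicities of principal curvatures (the traceless second fundamental form $\mathring A$ transforms by a scalar factor under ambient conformal changes), part (1) can be proven locally with the ambient space replaced by a space form $N^5(c)$. There the Gauss equation gives $\operatorname{Rm}=\tfrac12 A\KN A+\tfrac c2 g\KN g$. The Weyl tensor is the totally trace-free part of $\tfrac12 A\KN A$, so it depends only on $\mathring A$: explicitly, $\operatorname{W}=\tfrac12\mathring A\KN\mathring A$ minus its Ricci-type traces.

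At $p$ we choose an orthonormal frame $\{e_i\}$ diagonalizing $A$, with principal curvatures $\lambda_1,\dots,\lambda_4$. The only nonzero Weyl components are then $W_{ijij}$, and $\operatorname{W}$ acts diagonally on $e_i\wedge e_j$. We use the standard basis of $\Lambda_\pm$,
\[
\omega^\pm_1=\tfrac1{\sqrt2}(e^{12}\pm e^{34}),\quad \omega^\pm_2=\tfrac1{\sqrt2}(e^{13}\mp e^{24}),\quad \omega^\pm_3=\tfrac1{\sqrt2}(e^{14}\pm e^{23}).
\]
In this basis $\mathcal W^\pm$ is diagonal, with eigenvalues $\tfrac12(W_{1212}+W_{3434})$, $\tfrac12(W_{1313}+W_{2424})$ and $\tfrac12(W_{1414}+W_{2323})$. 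These are the same for $+$ and $-$, which is Theorem \ref{equalweylnormslcf}. Computing $W_{ijij}=\lambda_i\lambda_j-(\text{Ricci terms})+(\text{scalar term})$ and using that the eigenvalues sum to zero, the eigenvalues should take the form, up to a common additive constant and a positive factor,
\[
\mu_1\propto \lambda_1\lambda_2+\lambda_3\lambda_4,\qquad \mu_2\propto\lambda_1\lambda_3+\lambda_2\lambda_4,\qquad \mu_3\propto\lambda_1\lambda_4+\lambda_2\lambda_3 .
\]
The Ricci contributions $\lambda_i+\lambda_j$ summed over complementary pairs give $H$ in each case, so they only contribute to the common constant. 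I would verify this with the trace formula. Then
\[
\mu_1-\mu_2\propto(\lambda_1-\lambda_4)(\lambda_2-\lambda_3),\qquad \mu_1-\mu_3\propto(\lambda_1-\lambda_3)(\lambda_2-\lambda_4),\qquad \mu_2-\mu_3\propto(\lambda_1-\lambda_2)(\lambda_3-\lambda_4).
\]

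The case analysis is then pure combinatorics.
\begin{itemize}
\item $w=3$ holds iff all three products are nonzero, i.e.\ iff the $\lambda_i$ are pairwise distinct.
\item $w=1$ holds iff all three products vanish. Checking cases, this happens iff at least three of the $\lambda_i$ coincide; then $\mathcal W^+$ is trace-free with a single eigenvalue, so it vanishes.
\item $w=2$ is the remaining case: exactly one pair of equal $\lambda$'s ($m=3$), or the pattern $a,a,b,b$ ($m=2$ with both multiplicities two).
\end{itemize}
The only real obstacle in part (1) is getting the normalization of $\operatorname{W}$ and the signs right, so that the difference formulas above come out as products.

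For part (2), note first that the constants $R$ and $H$ force $S$ to be constant by the Gauss equation. If $M$ is isoparametric, the $\mu_i$ are polynomials in the $\lambda_i$ and hence constant. For the converse, the symmetric functions of the $\mu_i$ are constant. Their sum is zero, and $\sum\mu_i^2=\tfrac12|\operatorname{W}|^2$ is expressible via $S$, $|A^2|^2$ and $H$, so $|A^2|^2$, equivalently $\sigma_4$, is constant up to known quantities. The product $\mu_1\mu_2\mu_3$ should be a symmetric polynomial in the $\lambda_i$ involving $\sigma_3^2+\sigma_1^2\sigma_4-4\sigma_2\sigma_4$ (the resolvent cubic of the quartic). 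I would therefore recover $\sigma_1$, $\sigma_2$ and $\sigma_4$ from $H$, $S$ and $\sum\mu_i^2$, and then $\sigma_3^2$ from $\det\mathcal W^+$. This makes $\sigma_3$ constant on each connected component by continuity (where $\sigma_3$ changes sign it must pass through $0$, so it is constant). Hence the characteristic polynomial of $A$ is constant, and $M$ is isoparametric. The main obstacle here is the explicit resolvent-cubic identity and the continuity argument for the sign of $\sigma_3$. A cleaner route I would try first is to note that the $\mu_i$ are exactly the roots of the resolvent cubic of the quartic with roots $\lambda_i$, shifted by constants depending on $H$ and $S$, and that the quartic's roots are determined by the resolvent's roots together with $\sigma_1$ up to the sign of $\sigma_3$.
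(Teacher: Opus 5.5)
Your part (1) is the paper's argument. The paper computes $W^\pm_{ijij}=\frac14(\lambda_i+\lambda_j)(\lambda_i+\lambda_j-H)+\frac1{12}(H^2-S)$ in a principal frame and reduces $W^+_{1212}=W^+_{1313}$ to $(\lambda_2-\lambda_3)(\lambda_1-\lambda_4)=0$. This is your factorization, since $2W^+_{1212}=\frac12(\lambda_1\lambda_2+\lambda_3\lambda_4)-\frac1{12}(H^2-S)$. The extension to locally conformally flat ambients uses the same conformal-invariance remark you make.

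For part (2) you take a genuinely different route. The paper splits by $w$:
\begin{itemize}
\item for $w=2$ it solves for the principal curvatures in terms of $H$, $S$, $W^+_{1212}$, invokes Cartan's theorem (no isoparametric hypersurface with $m=3$ here), and forces $m=2$ everywhere;
\item for $w=3$ it uses local smoothness of simple principal curvatures and differentiates $W^+_{ijij}$ to get $d(\lambda_i+\lambda_j)[2(\lambda_i+\lambda_j)-H]=0$, hence $d\lambda_i=0$.
\end{itemize}
Your resolvent-cubic argument is uniform in $w$ and needs neither Cartan's classification nor differentiability of the eigenvalues. It also gives a sharper pointwise fact: the spectrum of $\mathcal W^+$, together with $H$ and $S$, determines the principal curvatures up to the reflection $\lambda\mapsto\frac H2-\lambda$. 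Connectedness then finishes the proof.

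There is one slip to fix. By \eqref{weylplusminusnormhyper}, $\lvert\operatorname{W}^+\rvert^2$ contains $2H\operatorname{tr}(A^3)$. So for $H\neq0$, $\sum\mu_i^2$ determines $4\sigma_4-H\sigma_3$, not $\sigma_4$. Combined with $\mu_1\mu_2\mu_3$, you get $\sigma_3^2+\frac{H(2S-H^2)}{4}\sigma_3=\mathrm{const}$ rather than $\sigma_3^2=\mathrm{const}$. Your argument that a sign change must pass through $0$ therefore does not apply verbatim. There are two easy fixes.
\begin{itemize}
\item Note that $\sigma_3$ is continuous and takes values among the at most two roots of a fixed quadratic, so it is constant on connected $M$.
\item Alternatively, as your ``cleaner route'' implicitly does, pass to $t_i=\lambda_i-\frac H4$. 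Since $t_it_j+t_kt_l=\lambda_i\lambda_j+\lambda_k\lambda_l-\frac{H^2}{8}$, the resolvent of the depressed quartic makes $\sigma_4(t)$ and $\sigma_3(t)^2$ constant, and your sign argument then works as written.
\end{itemize}
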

		
		\begin{rem} \label{remarklcf}
			\begin{enumerate}
				\item We point out that, if we do not
				assume that $R$ is constant,
				the second part of Theorem 
				\ref{eigentheoremlcf} might not
				be true: indeed, Otsuki showed
				that there exist infinitely many
				locally conformally flat minimal
				hypersurfaces with non-constant
				scalar curvature in $\mathbb{S}^{n+1}$
				which, consequently, are not isoparametric
				\cite{otsuki}, although they are not embedded (see \cite{wang2018simons} and the references therein). In fact, 
				the only locally conformally flat
				isoparametric hypersurfaces in 
				$\mathbb{S}^{n+1}$ are the
				totally geodesic equatorial spheres and the Clifford hypersurface
				$\mathbb{S}^1\left(\sqrt{\frac{1}{n}}\right)\times \mathbb{S}^{n-1}
				\left(\sqrt{\frac{n-1}{n}}\right)$. 
				\item In the minimal, isoparametric case, if $c=1$
				we can conclude that $w=2$ if
				and only if, at every point, there are two distinct principal curvatures, both with multiplicity two. This characterizes
				$(M^4,g)$ as a product 
				$M_1^2\times M_2^2$ of 
				two surfaces with constant curvature, since
				if $m=3$, $(M^4,g)$ would be one of the so-called \emph{Cartan minimal
					hypersurfaces}, which do not 
				exist in $\mathbb{S}^5$ (\cite{cartan}).
				\item We observe that, if the ambient space is $N^5(1)$, by the Gauss--Codazzi equations (see \eqref{scalconst} later), minimal, isoparametric hypersurfaces with $m=4$ provide nice examples of \emph{scalar-flat} manifolds which are not Ricci-flat: an explicit example was given by Cartan \cite{cartan40}, who showed that there exists an isoparametric family of immersions of $\mathbb{S}^1\times V_2(\mathbb{R}^3)$ in $\mathbb{S}^5$, one of which is minimal, with $m=4$ (this was later generalized by Nomizu to the case where the ambient manifold is $\mathbb{S}^{2n+1}$,
				for every $n\geq 2$ \cite{nomizu}). Here, $V_2(\mathbb{R}^3)=SO(3)\cong\mathbb{RP}^3$ is the Stiefel manifold of pairs of orthonormal vectors in $\mathbb{R}^3$ :
				in dimension four, this is the only isoparametric family
				of immersions with $m=4$. 
				
				We also recall that isoparametric hypersurfaces in spheres have been completely classified: for some references, see e.g. \cite{cecil, cecilryan, chi, FKM, ozeki1, ozeki2}. 
			\end{enumerate}
		\end{rem}
		By the local computations on the (anti-)self-dual Weyl tensor, we can provide an alternate proof, in dimension four, of a result due to Nishikawa and Maeda \cite{NiskikawaMaeda1974}, who proved that locally conformally flat hypersurfaces have a principal curvature of multiplicity at least $n-1$ at every point
		(see Proposition \ref{nishi}): we recall that, in the minimal case,  they are contained in catenoids, if the ambient space is complete and simply connected \cite{docarmodaj}.
		
		In Section \ref{topbounds}, we analyse the Euler characteristic of minimal hypersurfaces in space forms, in order to prove integral topological bounds. First, we recall that, given a closed Riemannian manifold of dimension $n$, the \emph{Yamabe constant} is defined as 
		the infimum over a conformal class $[g]$ of the so-called \emph{normalized Einstein--Hilbert functional}, i.e.
		\begin{equation} \label{yamabeinv}
			Y(M,[g]):=\inf_{g'\in [g]}\mathrm{Vol}_{g'}(M)^{-\frac{n-2}{n}}\int_MR_{g'}dV_{g'}>0.
		\end{equation}
		This quantity is a conformal invariant and it has been deeply studied over the years, in particular because of its connection with the problem of finding a metric of constant scalar curvature in an arbitrarily given conformal class of metrics on closed manifolds, known as the \emph{Yamabe problem}: in the closed case, such metrics always exist and they attain the minimum in \eqref{yamabeinv}, as shown by the efforts of Yamabe, Aubin, Trudinger and Schoen \cite{leepark}. 
		The sign of the Yamabe constant determines the sign of the scalar curvature of the metric in $[g]$ which attains the minimum: it follows that the existence of conformal classes with positive Yamabe constant might constrain the geometry and the topology of a smooth manifold (see e.g. \cite{bour2015optimal, bcdm, chang2003conformally, GurskyAnnals, Gurskyharmonicweyl, gurskylebrun, tran}).
		In the context of isometrically immersed hypersurfaces in $5$-dimensional space forms, we are able to prove an integral pinching result for the
		Euler characteristic, under the assumption of positive Yamabe constant:
		\begin{theorem} \label{boundyamabe}
			Let $(M^4,g)$ be a closed, oriented, minimal hypersurface into a space form $(N^5(c),g_N)$. Assume that $(M^4,g)$ has positive Yamabe constant. Then $c=1$ and either $M$ is diffeomorphic to $\mathbb{S}^4$ or
			\begin{equation} \label{eulerbound}
				\dfrac{1}{16}\int_M (4-S)(12+S)dV_g\leq 4\pi^2\chi(M) < \int_M \frac{1}{3}S^2dV_g;
			\end{equation}
			moreover, equality on the left-hand side is achieved if and only if $(M^4,g)$ is locally conformally flat
			and $\chi(M)\leq 0$.
		\end{theorem}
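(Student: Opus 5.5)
The plan is to compute everything extrinsically and then feed the result into the Chern--Gauss--Bonnet formula. The two sides of \eqref{eulerbound} come from different sources: the left inequality is purely algebraic, using the sharp bounds $\frac14 S^2\le |A^2|^2\le \frac{7}{12}S^2$; the right inequality needs the positive Yamabe hypothesis through a Gursky-type estimate.

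\textbf{Step 1: $c=1$.} For a minimal hypersurface in $N^5(c)$, the Gauss equation gives $R=12c-S$. If $c\le 0$, then $R\le 0$ pointwise, so $Y(M,[g])\le \mathrm{Vol}(M)^{-1/2}\int_M R\,dV_g\le 0$, contradicting the hypothesis. Hence $c>0$, and after rescaling the ambient metric, $c=1$ and $R=12-S$.

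\textbf{Step 2: the left inequality and its equality case.} Since $H=0$, the traceless Ricci tensor is $\mathring{\operatorname{Ric}}=-A^2+\frac{S}{4}g$, so $|\mathring{\operatorname{Ric}}|^2=|A^2|^2-\frac{S^2}{4}$. Substituting into $32\pi^2\chi(M)=\int_M(|\operatorname{W}|^2-2|\mathring{\operatorname{Ric}}|^2+\frac{R^2}{6})\,dV_g$ and using $|\operatorname{W}|^2\ge 0$ together with $|A^2|^2\le\frac{7}{12}S^2$, we get
\[
32\pi^2\chi(M)\ \ge\ \int_M\Big(-\tfrac{7}{6}S^2+\tfrac{1}{2}S^2+\tfrac{(12-S)^2}{6}\Big)dV_g=\tfrac12\int_M(4-S)(12+S)\,dV_g .
\]
This is exactly the left-hand side of \eqref{eulerbound}.

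Equality forces $\operatorname{W}\equiv 0$ and equality in $|A^2|^2\le \frac{7}{12}S^2$. The latter means three equal principal curvatures, which is precisely the locally conformally flat case by Theorem \ref{eigentheoremlcf} and Proposition \ref{nishi}. Conversely, if $M$ is locally conformally flat, then $\operatorname{W}=0$ and, by Nishikawa--Maeda, equality holds in the algebraic step. For the condition $\chi(M)\le 0$ I would use one of two arguments:
\begin{itemize}
\item the structure of locally conformally flat minimal hypersurfaces (rotational, with a principal curvature of multiplicity $3$), which forces the topology of $\mathbb{S}^1\times\mathbb{S}^3$-type quotients away from the totally geodesic $\mathbb{S}^4$;
\item Schoen--Yau/Kuiper-type results for closed locally conformally flat manifolds with $Y>0$.
\end{itemize}

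\textbf{Step 3: the right inequality.} Here I would use positivity of the Yamabe constant via Gursky's results. First, $Y>0$ gives $b_1(M)=0$ (or $M$ conformal to the round $\mathbb{S}^4$). Next, by Corollary \ref{signaturehyperlcf}, $\tau(M)=0$, so $b_2^+=b_2^-$ and $\chi(M)=2+2b_2^+$. Two cases arise:
\begin{itemize}
\item If $b_2^+=0$, then $\chi(M)=2$, and I expect Gursky's rigidity for $\int\sigma_2$ under $Y>0$ to yield either that $M$ is diffeomorphic to $\mathbb{S}^4$ or the strict estimate directly.
\item If $b_2^+>0$, I would invoke Gursky's inequality $\int_M|\operatorname{W}^+|^2\ge\frac{4\pi^2}{3}(2\chi+3\tau)$. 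Combined with Theorem \ref{equalweylnormslcf} ($|\operatorname{W}^+|^2=\frac12|\operatorname{W}|^2$) and $\tau=0$, this gives $\int_M|\operatorname{W}|^2\ge\frac{16\pi^2}{3}\chi(M)$.
\end{itemize}
To close the second case, I would express $|\operatorname{W}|^2$ extrinsically via \eqref{weylconst}; the $c$-terms drop out because $\operatorname{W}$ is the traceless part of $\frac12 A\KN A$. This should make $|\operatorname{W}|^2$ a combination of $S^2$ and $|A^2|^2$, which the lower bound $|A^2|^2\ge\frac14S^2$ turns into a multiple of $S^2$ that I expect to be $\frac{16}{9}S^2$. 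That would produce $4\pi^2\chi(M)\le\frac13\int_M S^2$. Strictness should follow because equality in Gursky's inequality forces $(M,g)$ to be conformally Einstein with $\operatorname{W}^-=0$, while here $|\operatorname{W}^-|=|\operatorname{W}^+|\neq 0$.

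\textbf{Main obstacle.} The delicate step is Step 3: choosing the right Gursky inequality, checking its hypotheses (notably $b_2^+>0$ versus the $\mathbb{S}^4$ alternative), and verifying that the extrinsic constant relating $|\operatorname{W}|^2$ to $S^2$ produces exactly $\frac13$. I would first compute $|\operatorname{W}|^2$ from \eqref{weylconst} explicitly and adjust the case analysis according to that constant.
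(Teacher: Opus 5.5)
Your Steps 1 and 2 are correct and match the paper. In Step 1, $c=1$ because otherwise $R=12c-S\le 0$. In Step 2, the left inequality is the extrinsic Chern--Gauss--Bonnet identity combined with $\lvert A^2\rvert^2\le\frac{7}{12}S^2$, with equality exactly in the locally conformally flat case.

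The genuine gap is Step 3, and it also affects the clause $\chi(M)\le 0$. First, the claim ``$Y>0$ gives $b_1(M)=0$'' is false. The Clifford hypersurface $\mathbb{S}^1(1/2)\times\mathbb{S}^3(\sqrt{3}/2)$ is minimal in $\mathbb{S}^5$ with $R=8>0$, hence $Y>0$, yet $b_1=1$ and $\chi=0$. Gursky's vanishing theorem for $b_1$ needs an extra sign condition on the total $Q$-curvature, not only $Y>0$. So $\chi(M)=2+2b_2^+$ is not available, and your case split is not exhaustive. Second, and more seriously, in the case $b_2^+=0$ you give no argument. Nothing in Gursky's $b_2^+>0$ theory can yield the conclusion ``diffeomorphic to $\mathbb{S}^4$''.

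The missing ingredient is the Chang--Gursky--Yang conformally invariant sphere theorem. If $Y(M,[g])>0$ and $\int_M\lvert\operatorname{W}\rvert^2dV_g<16\pi^2\chi(M)$ (tensor norm), then $M$ is diffeomorphic to $\mathbb{S}^4$ or $\mathbb{RP}^4$, and $\mathbb{RP}^4$ is excluded by orientability. This is where the $\mathbb{S}^4$ alternative comes from, and it removes any need for Betti numbers. In the other branch, using $\lvert A^2\rvert^2\ge\frac14S^2$,
\[
16\pi^2\chi(M)\le\int_M\lvert\operatorname{W}\rvert^2dV_g=\int_M\Big(\tfrac73S^2-4\lvert A^2\rvert^2\Big)dV_g\le\tfrac43\int_MS^2dV_g,
\]
which is the right inequality. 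Equality would force $g$ to be Einstein with $\int\lvert\operatorname{W}\rvert^2=16\pi^2\chi$. The equality case of the same theorem then gives two options:
\begin{itemize}
\item $(M,g)$ is conformal to $(\mathbb{CP}^2,g_{FS})$, which is impossible since $\tau(M)=0$;
\item $(M,g)$ is conformally flat, which together with Einstein forces $S\equiv0$; then $4\pi^2\chi(M)=0$, contradicting $\chi>0$ for a totally geodesic $M$.
\end{itemize}
This gives strictness.

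The same dichotomy settles $\chi\le 0$ at once. In the non-$\mathbb{S}^4$ branch, $\int\lvert\operatorname{W}\rvert^2\ge 16\pi^2\chi$, so $\operatorname{W}\equiv 0$ forces $\chi(M)\le0$. Your first option for this, the rotational structure of locally conformally flat minimal hypersurfaces, does not control the topology of a closed immersed hypersurface in a possibly non-simply-connected $N^5(1)$.

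Even your $b_2^+>0$ branch has the wrong constants. In the paper's normalization, Gursky's inequality reads $\int\lvert\operatorname{W}^+\rvert^2\ge\frac{16\pi^2}{3}(2\chi+3\tau)$, so here $\int\lvert\operatorname{W}\rvert^2\ge\frac{64\pi^2}{3}\chi$. The extrinsic multiple is $\frac43$, not $\frac{16}{9}$. With your constants you would only get $4\pi^2\chi\le\int S^2$; with the correct ones you get $\frac14\int S^2$. Also, equality in Gursky's bound means conformally K\"ahler--Einstein, which does not force $\operatorname{W}^-=0$: the Clifford $\mathbb{S}^2\times\mathbb{S}^2$ attains it.
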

		After this, inspired by the optimal topological bounds for the $L^2$-norm of $\operatorname{W}$ (the so-called \emph{Weyl functional}) obtained by Gursky in \cite{GurskyAnnals} and \cite{gurskyzeit}, we derive sharp topological lower bounds for $||\operatorname{W}||_{L^2}^2$ in terms of $\chi(M)$ for minimal hypersurfaces immersed in space forms of zero or positive sectional curvature, in the constant scalar curvature case. More precisely, we use
		the extrinsic Chern--Gauss--Bonnet formula for minimal hypersurfaces \eqref{cherngaussbonnetminimal} and a local version of the rigidity
		results obtained by Chern, Do Carmo, Kobayashi \cite{cherndocarmokob} and Lawson \cite{lawson} (Proposition \ref{localisometryprop}) to prove the
		following:
		\pagebreak
		\begin{theorem} \label{topbound}
			Let $(M^4,g)$ be a closed, minimally immersed hypersurface
			in a space form $(N^5(c),g_N)$. Then, we have that
			\begin{enumerate}
				\item if $c=0$, either $(M^4,g)$ is locally conformally flat with $\chi(M)\geq 0$ or
				\begin{equation} \label{boundeucl}
					\int_M\lvert\operatorname{W}\rvert^2dV_g>\dfrac{256}{9}
					\pi^2\chi(M);
				\end{equation}
				\item if $c=1$ and $S$
				is constant, either $(M^4,g)$ is totally geodesic or 
				\begin{equation} \label{boundsphere}
					\int_M\lvert\operatorname{W}\rvert^2dV_g\geq\dfrac{64}{3}
					\pi^2\chi(M),
				\end{equation}
				where equality holds if and only if $M$ is locally isometric to \\ $\mathbb{S}^1(1/2)\times \mathbb{S}^3(\sqrt{3}/2)$ or to $\mathbb{S}^2(1/\sqrt{2})\times \mathbb{S}^2(1/\sqrt{2})$. 
			\end{enumerate}
		\end{theorem}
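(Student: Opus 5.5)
The plan is to express everything extrinsically via the Gauss equations and reduce both inequalities to the two algebraic bounds $\frac14 S^2\le\lvert A^2\rvert^2\le\frac{7}{12}S^2$ and to Simons' inequality.

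\emph{Extrinsic formulas.} Since $H=0$, the Gauss equations give $\operatorname{Ric}=3c\,g-A^2$ and $R=12c-S$. Hence $\mathring{\operatorname{Ric}}=-(A^2-\frac S4 g)$ and $\lvert\mathring{\operatorname{Ric}}\rvert^2=\lvert A^2\rvert^2-\frac14S^2$. The Weyl tensor is conformally invariant and the ambient space is conformally flat, so $\operatorname{W}$ depends only on $A$ (this is \eqref{weylconst}). Using $\lvert \operatorname{Rm}\rvert^2=\lvert\operatorname{W}\rvert^2+2\lvert\mathring{\operatorname{Ric}}\rvert^2+\frac{R^2}{6}$ in the case $c=0$, where $\lvert\operatorname{Rm}\rvert^2=2(S^2-\lvert A^2\rvert^2)$, I expect
\[
\lvert\operatorname{W}\rvert^2=\tfrac73 S^2-4\lvert A^2\rvert^2 .
\]
Substituting $\lvert A^2\rvert^2=\frac14(\frac73S^2-\lvert\operatorname{W}\rvert^2)$ into the minimal Chern--Gauss--Bonnet formula \eqref{cherngaussbonnetminimal}, $32\pi^2\chi(M)=\int_M[3S^2-6\lvert A^2\rvert^2+4c(6c-S)]\,dV_g$, gives the key identity
\[
32\pi^2\chi(M)=\int_M\Big[\tfrac32\lvert\operatorname{W}\rvert^2-\tfrac12S^2+4c(6c-S)\Big]dV_g .
\]

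\emph{Case $c=0$.} The lower bound $\lvert A^2\rvert^2\ge\frac14 S^2$ (i.e. $\lvert\mathring{\operatorname{Ric}}\rvert^2\ge0$) gives $\lvert\operatorname{W}\rvert^2\le\frac43S^2$, so $-\frac12S^2\le-\frac38\lvert\operatorname{W}\rvert^2$. Therefore
\[
32\pi^2\chi(M)\le\tfrac98\int_M\lvert\operatorname{W}\rvert^2\,dV_g ,
\]
which is exactly \eqref{boundeucl} with $\ge$. For strictness, equality would force $\mathring{\operatorname{Ric}}\equiv0$, i.e. $A^2=\frac S4 g$. Then $M$ is Einstein with $R=-S\le 0$, and one checks (Fialkow/Ryan type argument, or simply the absence of closed minimal hypersurfaces in $\mathbb R^5$) that this is incompatible with closedness. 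The locally conformally flat alternative should be handled separately: there the identity becomes $32\pi^2\chi=-\frac12\int S^2$, so I will check how this interacts with the sign statement in the theorem.

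\emph{Case $c=1$, $S$ constant.} Now
\[
32\pi^2\chi(M)=\tfrac32\int_M\lvert\operatorname{W}\rvert^2\,dV_g-\tfrac12\int_M(S-4)(S+12)\,dV_g .
\]
So \eqref{boundsphere} is equivalent to $\int_M(S-4)(S+12)\ge0$. Simons' integral formula, $\int_M\lvert\nabla A\rvert^2=\int_M S(S-4)$, with $S$ constant gives $S=0$ (totally geodesic) or $S\ge4$. This yields the inequality. Equality forces $S=4$, hence $\nabla A\equiv0$. The local Chern--do Carmo--Kobayashi/Lawson rigidity (Proposition \ref{localisometryprop}) then identifies $M$ locally with a Clifford product $\mathbb S^k(\sqrt{k/4})\times\mathbb S^{4-k}(\sqrt{(4-k)/4})$. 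With $k=1,2$ (up to symmetry) these are the two listed models, and conversely these models give equality.

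The main obstacle is the curvature bookkeeping in the first step: getting the exact constant in $\lvert\operatorname{W}\rvert^2=\frac73S^2-4\lvert A^2\rvert^2$ and checking that the $c$-terms drop out of $\operatorname{W}$. I would verify it against the sharp inequality $\lvert A^2\rvert^2\le\frac7{12}S^2$ (equality should correspond to $\operatorname{W}=0$) and on a Clifford example.
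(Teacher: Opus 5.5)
Your proof is correct. It uses the same ingredients as the paper: the extrinsic Chern--Gauss--Bonnet formula, $\lvert\operatorname{W}\rvert^2=\frac73S^2-4\lvert A^2\rvert^2$, the Einstein bound $\lvert A^2\rvert^2\ge\frac14S^2$, Fialkow/Ryan, Simons' identity and Proposition \ref{localisometryprop}. The bookkeeping is different and more direct.

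\textbf{Comparison with the paper.} The paper first discards $\chi(M)\le0$ and the locally conformally flat case by hand. It then sets $\int_M\lvert\operatorname{W}\rvert^2dV_g=D\pi^2\chi(M)$, derives \eqref{integrineq}, and splits into ranges of $D$. It uses the right-hand side of \eqref{ineq} for small $D$ and the left-hand side for large $D$. You instead substitute the exact relation $\lvert A^2\rvert^2=\frac14(\frac73S^2-\lvert\operatorname{W}\rvert^2)$ and get $32\pi^2\chi(M)=\int_M[\frac32\lvert\operatorname{W}\rvert^2-\frac12S^2+4c(6c-S)]dV_g$. This is exactly \eqref{integrineq} with the $D$'s cleared. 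It shows that the paper's use of $\lvert A^2\rvert^2\le\frac7{12}S^2$ is only $\lvert\operatorname{W}\rvert^2\ge0$, applied where an identity is already available.

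What your route buys:
\begin{itemize}
\item For $c=1$ and $S$ constant, $\int_M\lvert\operatorname{W}\rvert^2dV_g-\frac{64}{3}\pi^2\chi(M)=\frac13(S-4)(S+12)\mathrm{Vol}_g(M)$ holds exactly. So only Simons' gap ($S=0$ or $S\ge4$) is needed, with no assumption on the sign of $\chi(M)$.
\item In the same case, the locally conformally flat model $\mathbb{S}^1\times\mathbb{S}^3$ appears in the equality case automatically rather than being treated separately.
\item For $c=0$, a single use of $\lvert\mathring{\operatorname{Ric}}\rvert^2\ge0$ gives the constant $256/9$.
\item Your side remark, $32\pi^2\chi(M)=-\frac12\int_MS^2dV_g$ when $c=0$ and $\operatorname{W}\equiv0$, shows more. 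The first alternative in (1) can only be the flat, totally geodesic case with $\chi(M)=0$. The paper does not make this explicit.
\end{itemize}
The paper's $D$-parametrization mainly pays off in reuse: the same case split in $D$ drives the proof of Corollary \ref{corbound}.

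\textbf{Correction in the strictness step for $c=0$.} An Einstein closed minimal hypersurface of a flat space form is not incompatible with closedness. A totally geodesic flat $T^4$ inside a flat $T^5$ is a closed example. Your appeal to $\mathbb{R}^5$ does not apply, because closed minimal hypersurfaces occur only in non-simply-connected flat ambients, as the paper remarks.

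The right conclusion is the one the paper draws from Fialkow/Ryan. Alternatively, Schur makes $S$ constant, so the principal curvatures $\pm\sqrt S/2$ are constant, and Cartan's identity with $c=0$ forces them to vanish. Either way, equality forces $A\equiv0$. Then $M$ is flat, hence locally conformally flat with $\chi(M)=0$, which is precisely the first alternative in (1). So your conclusion stands.

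Your justification that $\operatorname{W}$ is independent of $c$ should also be stated more precisely. The $c$-term in the Gauss equation is a multiple of $g\KN g$, which has no Weyl component; this is what \eqref{weylconst} records.
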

		This immediately implies the following 
		\begin{cor} \label{rigiditytopbound}
			Let $(M^4,g)$ be a closed, minimally immersed hypersurface with constant scalar curvature in a space form $(N^5(1),g_N)$. Then, if
			\[\int_M\lvert\operatorname{W}\rvert^2dV_g\leq \dfrac{64}{3}\pi^2\chi(M),
			\]
			either $(M^4,g)$ is totally geodesic or equality holds and $(M^4,g)$ is locally isometric to one of the following product manifolds:
			\begin{enumerate}
				\item $\mathbb{S}^1(1/2)\times \mathbb{S}^3(\sqrt{3}/2)$;
				\item $\mathbb{S}^2(1/\sqrt{2})\times \mathbb{S}^2(1/\sqrt{2})$.
			\end{enumerate}
			In particular, if the ambient space is $(\mathbb{S}^5, g_{\mathbb{S}^5})$, under the same hypotheses either $M$ is an equatorial sphere or isometric to a Clifford hypersurface. 
		\end{cor}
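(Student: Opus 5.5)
The corollary will follow from Theorem \ref{topbound}(2) combined with the classical rigidity results of Chern--do Carmo--Kobayashi and Lawson. Since the ambient space has constant sectional curvature $1$, the Gauss equation gives $R = 12 - S$ for a minimal hypersurface. So constant scalar curvature means $S$ is constant, and we are exactly in the setting of Theorem \ref{topbound}(2).

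First I apply Theorem \ref{topbound}(2). Either $(M^4,g)$ is totally geodesic, and we are done, or
\[
\int_M\lvert\operatorname{W}\rvert^2dV_g\geq\frac{64}{3}\pi^2\chi(M).
\]
Combined with the assumed reverse inequality, this forces equality. The equality characterization in Theorem \ref{topbound}(2) then says that $M$ is locally isometric to $\mathbb{S}^1(1/2)\times \mathbb{S}^3(\sqrt{3}/2)$ or to $\mathbb{S}^2(1/\sqrt{2})\times \mathbb{S}^2(1/\sqrt{2})$. This settles the first part.

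For the final assertion, take the ambient space to be $(\mathbb{S}^5,g_{\mathbb{S}^5})$. In the totally geodesic case, a closed, totally geodesic hypersurface of the round sphere is an equatorial $\mathbb{S}^4$: it is an open subset of a great sphere and, being closed, covers it. In the other case, I use that the equality case comes from a pointwise rigidity statement, namely the local version of Chern--do Carmo--Kobayashi/Lawson recorded in Proposition \ref{localisometryprop}. This should give that $S\equiv 4 = n$, i.e.\ $M$ is isoparametric with two distinct principal curvatures. The global theorem of Chern--do Carmo--Kobayashi and Lawson then states that a closed minimal hypersurface of $\mathbb{S}^{5}$ with $S\equiv n$ is a Clifford hypersurface $\mathbb{S}^k(\sqrt{k/4})\times\mathbb{S}^{4-k}(\sqrt{(4-k)/4})$. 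The two local models single out $k=1$ and $k=2$, and these match exactly.

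The main obstacle is extracting $S\equiv 4$ from the equality case, rather than only a local isometry type. I expect it to be immediate: the local isometry gives the principal curvatures, hence $S$, since $S=12-R$ is an intrinsic quantity. Each of the two model products has scalar curvature $8$, which gives $S=4$. After that, citing the classical global rigidity completes the proof.
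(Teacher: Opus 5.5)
Your proposal is correct and follows the paper's route: the paper also deduces the corollary immediately from Theorem \ref{topbound}(2), since the hypothesis forces equality there, and it attributes the global statement in $\mathbb{S}^5$ to Lawson's global version of the Chern--do Carmo--Kobayashi classification. Your extra step recovering $S\equiv 4$ is sound, but only via your second justification: a local isometry does not determine the principal curvatures, whereas it does fix $R=12-S=8$ for both models. The paper in fact already obtains $S\in\{0,4\}$ inside the proof of Theorem \ref{topbound}.
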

		Moreover, as a direct consequence of the method used in the proof of Theorem \ref{topbound}, we also obtain a lower topological bound for
		the Weyl functional in the constant, non-positive scalar curvature regime, analogous to the one obtained by Gursky \cite{gurskyzeit}: 
		\begin{corollary} \label{corbound}
			Let $(M^4,g)$ be a closed, minimally immersed hypersurface
			in a space form $(N^5(1),g_N)$. Then, if the scalar curvature
			$R$ of $(M^4,g)$ is constant and non-positive (i.e. $S\geq 12$), then
			\[
			\int_M\lvert\operatorname{W}\rvert^2dV_g\geq 
			32\pi^2\chi(M).
			\]
		\end{corollary}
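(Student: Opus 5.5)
The plan is to reduce everything to a pointwise or integral comparison between $\lvert\operatorname{W}\rvert^2$ and the integrand of the extrinsic Chern--Gauss--Bonnet formula, as in the proof of Theorem \ref{topbound}. If $\chi(M)\le 0$ there is nothing to prove, since the left-hand side is non-negative. So assume $\chi(M)>0$.

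\emph{Step 1: rewrite the defect via Chern--Gauss--Bonnet.} For a minimal hypersurface in $N^5(1)$ the Gauss equations give $\operatorname{Ric}=3g-A^2$, hence
\[
R=12-S,\qquad \mathring{\operatorname{Ric}}=-\bigl(A^2-\tfrac{S}{4}g\bigr),\qquad \lvert\mathring{\operatorname{Ric}}\rvert^2=\lvert A^2\rvert^2-\tfrac14 S^2 .
\]
Substituting into the Chern--Gauss--Bonnet formula of the Introduction yields
\[
\int_M\lvert\operatorname{W}\rvert^2dV_g-32\pi^2\chi(M)=\int_M\Bigl(2\lvert A^2\rvert^2-\tfrac12 S^2-\tfrac16(S-12)^2\Bigr)dV_g .
\]
So the claim is equivalent to the integral inequality $\int_M 2\bigl(\lvert A^2\rvert^2-\tfrac14S^2\bigr)\,dV_g\ge \tfrac16\int_M(S-12)^2\,dV_g$, using that $S\ge 12$ is constant.

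\emph{Step 2: obtain extra control on $\lvert A^2\rvert^2$.} The pointwise inequality $\lvert A^2\rvert^2\ge \tfrac14 S^2$ alone only makes the left-hand side non-negative. It is sharp exactly when the principal curvatures are $\pm a$, each with multiplicity two. So the proof must exploit the constancy of $S$ together with the regime $S\ge 12$, i.e.\ $S$ far above the Simons threshold $4$. I would reuse the ingredients of the proof of Theorem \ref{topbound}:
\begin{itemize}
\item the Simons identity $\tfrac12\Delta S=\lvert\nabla A\rvert^2+S(4-S)$, which for constant $S$ gives $\int_M\lvert\nabla A\rvert^2\,dV_g=\int_M S(S-4)\,dV_g$;
\item the local rigidity statement, Proposition \ref{localisometryprop}, in the Chern--do Carmo--Kobayashi/Lawson form.
\end{itemize}
The point of Proposition \ref{localisometryprop} is to rule out, when $S\ge 12$ is constant, the configuration with principal curvatures $\pm a$ of multiplicity two on an open set. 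Such a configuration is the parallel Clifford case, where $\nabla A=0$ forces $S=4$. This is incompatible with $\int\lvert\nabla A\rvert^2=\int S(S-4)>0$ pointwise-generically.

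Quantitatively, I would try to get the needed bound by combining the Simons integral identity with the next-order Simons-type (Peng--Terng) formula for $\tfrac12\Delta\lvert\nabla A\rvert^2$, valid when $S$ is constant. Integrated, that formula expresses $\int(\lvert A^2\rvert^2-\tfrac14S^2)$, together with a $\operatorname{tr}(A^3)^2$ term, in terms of $\int\lvert\nabla A\rvert^2$ and $\int\lvert\nabla^2A\rvert^2\ge0$, with coefficients growing with $S$. That would give a lower bound of the form $\int(\lvert A^2\rvert^2-\tfrac14S^2)\ge \kappa(S)\int S$ for an explicit function $\kappa(S)$.

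\emph{Step 3: close the inequality.} Finally, I would check the elementary inequality $2\kappa(S)S\ge\tfrac16(S-12)^2$ for $S\ge 12$; note that it holds trivially at $S=12$, where the right-hand side vanishes. The main obstacle is Step 2: extracting an integral lower bound on the traceless Ricci energy $\lvert A^2\rvert^2-\tfrac14S^2$ strong enough to beat $\tfrac1{12}(S-12)^2$. If the second Simons formula produces a bound that is too weak, the fallback is to follow the estimate in the proof of Theorem \ref{topbound} line by line with $S\ge12$ in place of $S\le 4$, and track where the sign of $R=12-S$ flips the inequality.
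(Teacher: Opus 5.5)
Your Step 1 is correct and is the right reformulation. With $R=12-S$ and $\lvert\mathring{\operatorname{Ric}}\rvert^2=\lvert A^2\rvert^2-\frac14S^2$, the claim is exactly $\int_M\lvert\mathring{\operatorname{Ric}}\rvert^2dV_g\geq\frac1{12}(S-12)^2\,\mathrm{Vol}_g(M)$. But the proof stops there. Step 2 is the whole content, and it is only announced, not carried out. The route you propose also does not deliver it as it stands. For constant $S$, Simons' identity gives $\lvert\nabla A\rvert^2=S(S-4)$ pointwise. The Peng--Terng formula for $\frac12\Delta\lvert\nabla A\rvert^2$, combined with their integral identity for $\int_M\sum_{i,j,k}(\lambda_i^2-2\lambda_i\lambda_j)A_{ijk}^2\,dV_g$, then gives in dimension four
\[
3\int_M\Bigl(S\lvert A^2\rvert^2-[\operatorname{tr}(A^3)]^2-S^2\Bigr)dV_g=\int_M\lvert\nabla^2A\rvert^2dV_g+(11-S)S(S-4)\,\mathrm{Vol}_g(M).
\]
The coefficient $2n+3-S=11-S$ is negative throughout your regime $S\geq 12$. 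Dropping $\lvert\nabla^2A\rvert^2$ and $[\operatorname{tr}(A^3)]^2$ therefore yields only $\mathrm{Vol}_g(M)^{-1}\int_M\lvert A^2\rvert^2dV_g\geq-\frac13S^2+6S-\frac{44}{3}$. For $S\geq12$ this is weaker than the trivial bound $\frac14S^2$, so the coefficients do not grow with $S$ in your favour; they have the wrong sign. To reach $\frac13S^2-2S+12$ this way you would need something like $\int_M\lvert\nabla^2A\rvert^2dV_g\geq2S(S^2-12S+40)\,\mathrm{Vol}_g(M)$, and nothing supplies that. These identities are what produce gap theorems just above $S=n$, and they say nothing for large $S$.

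Your fallback, rerunning the proof of Theorem \ref{topbound} with $S\geq12$, is precisely what the paper does, and it does not close either. The paper writes $\int_M\lvert\operatorname{W}\rvert^2dV_g=D\pi^2\chi(M)$ with $64/3<D<32$. It inserts the left-hand side of \eqref{ineq} into \eqref{integrineq} to get $\frac{256-9D}{6D}S^2+4S-24\leq0$, and concludes $S<12$. For $256/9<D<32$, however, the leading coefficient is negative. The inequality then also holds for all $S\geq S_2(D):=\frac{12}{9D-256}\bigl(D+2\sqrt{2D(32-D)}\bigr)$, and since $S_2(D)>12$ with $S_2(D)\to12$ as $D\to32$, this branch is never excluded.

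Your reformulation shows that no rearrangement of these ingredients can help. Work in terms of $S$, $\mathcal{A}=\mathrm{Vol}_g(M)^{-1}\int_M\lvert A^2\rvert^2dV_g$ and $\chi(M)/\mathrm{Vol}_g(M)$. Failure of the claim means $\mathcal{A}<\frac13S^2-2S+12$. This is compatible with Chern--Gauss--Bonnet, \eqref{ineq}, Simons' identity and Theorem \ref{topbound} for every constant $S>12$, because $\frac13S^2-2S+12-\frac14S^2=\frac1{12}(S-12)^2>0$. For instance, $S=20$ and $\mathcal{A}=102$ give $\int_M\lvert\operatorname{W}\rvert^2dV_g=\frac{1576}{3}\mathrm{Vol}_g(M)<532\,\mathrm{Vol}_g(M)=32\pi^2\chi(M)$.

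What is missing, in your proposal and in the paper's argument alike, is a genuinely quantitative lower bound on the traceless Ricci energy for constant $S>12$. The statement is consistent with the Chern conjecture, since an isoparametric example with $S\geq12$ has $S=12$, where the inequality is trivial. That consistency is not a proof.
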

		We point out that a lower bound for the Weyl functional in terms of the Betti numbers was found by Onti and Vlachos for conformal immersions in the standard Euclidean space \cite{onti}. 
		
		Finally, we address a problem derived from the weak version of the Chern conjecture, i.e. the so-called \emph{second pinching problem}: this more general version of the conjecture, also inspired by Simons' identity (\cite{simons}, see \eqref{simonsid}) and the
		aforementioned result by M\"{u}nzner \cite{munzner}, states that, if $M$ is a closed, $n$-dimensional minimal hypersurface with constant scalar curvature in the sphere with $S>n$, then $S\geq 2n$. The first step towards the resolution of this problem was taken in \cite{pengterng} and, since then, there have been many improvements: to the best of our knowledge, the best result so far was obtained by Yang and Cheng \cite{yangcheng}, who proved that, if $S>n$, then $S\geq n+C(n)$, where $C(n)=n/3$, but a full resolution of the problem has not been achieved yet. 
		Using the sharp integral bounds for the Weyl functional obtained in Theorem \ref{topbound}, we are able to provide a lower bound for 
		$S$ only depending on the Euler characteristic and the volume of $M$:
		\begin{cor} \label{corpinch}
			Under the hypothesis of case (2) of Theorem \ref{topbound}, if 
			$\chi(M)\geq 0$, then either $(M^4,g)$ is totally geodesic or we have 
			\begin{equation} \label{secondformbound}
				S\geq 4\pi\sqrt{\dfrac{\chi(M)}{\mathrm{Vol}_g(M)}}.
			\end{equation}
			Furthermore, equality holds if and only if $(M^4,g)$ is 
			locally isometric to $\mathbb{S}^2(1/\sqrt{2})\times \mathbb{S}^2(1/\sqrt{2})$.
		\end{cor}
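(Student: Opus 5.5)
The plan is to combine the lower bound \eqref{boundsphere} from Theorem \ref{topbound} with a pointwise upper bound for $\lvert\operatorname{W}\rvert^2$ in terms of $S$ alone, obtained from the lower half of the algebraic inequality $\frac14 S^2\le\lvert A^2\rvert^2\le\frac{7}{12}S^2$ recalled in the introduction.

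\emph{Extrinsic formula for the Weyl norm.} For $H=0$ and $c=1$, the Gauss equation gives $\operatorname{Ric}=3g-A^2$, hence $R=12-S$ and $\mathring{\operatorname{Ric}}=-(A^2-\frac S4 g)$, so that $\lvert\mathring{\operatorname{Ric}}\rvert^2=\lvert A^2\rvert^2-\frac14S^2$. The extrinsic Chern--Gauss--Bonnet integrand (with $H=0$, $c=1$) is $3S^2-6\lvert A^2\rvert^2+4(6-S)$. Comparing it with the intrinsic integrand $\lvert\operatorname{W}\rvert^2-2\lvert\mathring{\operatorname{Ric}}\rvert^2+R^2/6$ determines the Weyl norm. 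A short computation, in which the terms $24-4S$ cancel against $R^2/6=(12-S)^2/6$, should give the pointwise identity
\[
\lvert\operatorname{W}\rvert^2=\frac73 S^2-4\lvert A^2\rvert^2 .
\]
This is consistent with the upper bound $\lvert A^2\rvert^2\le\frac7{12}S^2$, and it is presumably the identity encoded in \eqref{weylconst}.

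\emph{Main inequality.} Since $A$ is trace-free, $\lvert A^2\rvert^2\ge\frac14 S^2$ pointwise, with equality if and only if $A^2=\frac S4 g$. The latter means the principal curvatures are $\pm\sqrt{S}/2$, each with multiplicity two. This gives $\lvert\operatorname{W}\rvert^2\le\frac43 S^2$. Assume $M$ is not totally geodesic. Since $S$ is constant, integrating and using \eqref{boundsphere} gives
\[
\frac{64}{3}\pi^2\chi(M)\le\int_M\lvert\operatorname{W}\rvert^2dV_g\le\frac43 S^2\,\mathrm{Vol}_g(M).
\]
Hence $S^2\ge 16\pi^2\chi(M)/\mathrm{Vol}_g(M)$. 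Because $\chi(M)\ge0$ and $S\ge0$, taking square roots yields \eqref{secondformbound}.

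\emph{Equality case.} This is the only delicate point. Equality in \eqref{secondformbound} forces equality in both inequalities above. Equality in \eqref{boundsphere} restricts $M$, by Theorem \ref{topbound}, to being locally isometric to one of the two products $\mathbb{S}^1(1/2)\times\mathbb{S}^3(\sqrt3/2)$ or $\mathbb{S}^2(1/\sqrt2)\times\mathbb{S}^2(1/\sqrt2)$. Equality in $\lvert A^2\rvert^2\ge\frac14S^2$ forces two principal curvatures, each of multiplicity two. This excludes $\mathbb{S}^1\times\mathbb{S}^3$, whose multiplicities are $1$ and $3$, and leaves $\mathbb{S}^2\times\mathbb{S}^2$.

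For the converse, it is not enough to check the two inequalities separately; one computes directly. On $\mathbb{S}^2(1/\sqrt2)\times\mathbb{S}^2(1/\sqrt2)$ the principal curvatures are $\pm1$, so $S=4$. Locally the equality case is characterized pointwise by \eqref{boundsphere} together with $A^2=\frac S4 g$, so both inequalities are equalities. To double-check on the closed model, one has $\chi=4$ and $\mathrm{Vol}=(2\pi)^2=4\pi^2$, so that
\[
4\pi\sqrt{\frac{\chi(M)}{\mathrm{Vol}_g(M)}}=4\pi\cdot\frac1\pi=4=S,
\]
and equality indeed holds.
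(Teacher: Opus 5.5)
Your proof is correct and is essentially the paper's argument: the paper also combines \eqref{boundsphere} with the pointwise bound $\lvert\operatorname{W}\rvert^2=\frac73S^2-4\lvert A^2\rvert^2\le\frac43S^2$, which comes from the left-hand side of \eqref{ineq}, and then integrates using that $S$ is constant. Your equality discussion (forcing equality in both steps, then excluding $\mathbb{S}^1\times\mathbb{S}^3$ via $A^2=\frac S4 g$, i.e. the Einstein condition) spells out what the paper dismisses in one line as trivial.
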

		In fact, by analyzing the Chern--Gauss--Bonnet formula and using sharp algebraic estimates for the 
		powers of the second fundamental form, we can obtain the following:
		\begin{theorem} \label{sboundeuler}
			Let $(M^4,g)$ be a closed, minimally immersed, non-totally geodesic hypersurface in a 
			space form $(N^5(1),g_N)$ with constant scalar curvature. Then, 
			\[
			S\geq f\left(\dfrac{\chi(M)}{\mathrm{Vol_g(M)}}\right),
			\]
			where 
			\[
			f\left(\dfrac{\chi(M)}{\mathrm{Vol_g(M)}}\right)=
			\begin{cases}
				-4+8\sqrt{1-\dfrac{\pi^2\chi(M)}{\mathrm{Vol}_g(M)}}, &\mbox{ if }
				\dfrac{\chi(M)}{\mathrm{Vol}_g(M)}\leq\dfrac{9}{25\pi^2},\\
				4\pi\sqrt{\dfrac{\chi(M)}{\mathrm{Vol}_g(M)}}, &\mbox{ if }
				\dfrac{9}{25\pi^2}<\dfrac{\chi(M)}{\mathrm{Vol}_g(M)}\leq
				\dfrac{1}{\pi^2},\\
				\dfrac{4}{3}+\dfrac{8\sqrt{2}}{3}\sqrt{\dfrac{3\pi^2\chi(M)}{2\mathrm{Vol}_g(M)}-1}, &\mbox{ if }
				\dfrac{1}{\pi^2} <\dfrac{\chi(M)}{\mathrm{Vol}_g(M)}.
			\end{cases}
			\]
			Moreover, 
			\begin{enumerate}
				\item $S=-4+8\sqrt{1-\frac{\pi^2\chi(M)}{\mathrm{Vol}_g(M)}}$ if and only if $(M^4,g)$ is locally isometric to 
				$\mathbb{S}^1(1/2)\times\mathbb{S}^3(\sqrt{3}/2)$;
				\item $S=\frac{4}{3}+\frac{8\sqrt{2}}{3}\sqrt{\frac{3\pi^2\chi(M)}{2\mathrm{Vol}_g(M)}-1}$ if and only if $S=4\pi\sqrt{\frac{\chi(M)}{\mathrm{Vol}_g(M)}}$ if and only if $(M^4,g)$ is locally isometric to $\mathbb{S}^2(1/\sqrt{2})\times \mathbb{S}^2(1/\sqrt{2})$.
			\end{enumerate}
		\end{theorem}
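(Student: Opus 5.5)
The plan is to read everything off the extrinsic Chern--Gauss--Bonnet formula, using that $S$ is constant. For $H=0$ and $c=1$, \eqref{cherngaussbonnethyper} becomes
\[
32\pi^2\chi(M)=\int_M\left(3S^2-6\lvert A^2\rvert^2+24-4S\right)dV_g .
\]
Set $x:=\chi(M)/\mathrm{Vol}_g(M)$. Since $S$ is constant, dividing by the volume gives
\[
32\pi^2x=3S^2-4S+24-\frac{6}{\mathrm{Vol}_g(M)}\int_M\lvert A^2\rvert^2dV_g .
\]
The sharp pointwise bounds $\tfrac14S^2\le\lvert A^2\rvert^2\le\tfrac{7}{12}S^2$ then turn this identity into two quadratic inequalities in $S$. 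Theorem \ref{topbound} / Corollary \ref{corpinch} supplies a third, middle bound.

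\textbf{The three bounds.}
\begin{enumerate}
\item Using $\lvert A^2\rvert^2\le \tfrac{7}{12}S^2$ gives $32\pi^2x\ge -\tfrac12 S^2-4S+24$, that is, $S^2+8S-48+64\pi^2x\ge0$. Since $S\ge0$, this yields $S\ge -4+8\sqrt{1-\pi^2x}$ whenever $x\le 1/\pi^2$; for $x>1/\pi^2$ the inequality is automatic.
\item Using $\lvert A^2\rvert^2\ge\tfrac14S^2$ gives $\tfrac32S^2-4S+24-32\pi^2x\ge0$. Solving the quadratic, this yields $S\ge \tfrac43+\tfrac{8\sqrt2}{3}\sqrt{\tfrac{3\pi^2x}{2}-1}$ when $x\ge 2/(3\pi^2)$. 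The larger root is the relevant one, and one has to check that $S$ cannot sit below the smaller root. To exclude that, I will combine this with bound (1), or with Simons' identity, which forces $S\ge 4$ when $S$ is constant and $S\neq0$.
\item For $x\ge0$, Corollary \ref{corpinch} gives $S\ge4\pi\sqrt{x}$.
\end{enumerate}

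\textbf{Assembling $f$.} All three bounds hold simultaneously on their domains. Hence $S\ge\max$ of the applicable bounds, and I will show that this maximum equals the piecewise $f$ by elementary comparison. The first and middle expressions coincide exactly at $x=9/(25\pi^2)$, where both equal $12/5$. The middle and third coincide at $x=1/\pi^2$, where both equal $4$. Checking the sign of each pairwise difference, say via monotonicity of the differences or squaring, identifies the dominant expression on each interval. For $x<0$ only the first bound is available, and it is the one used.

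\textbf{Equality cases and main obstacle.} Equality in the first bound forces pointwise equality $\lvert A^2\rvert^2=\tfrac7{12}S^2$. I will verify that this happens only for principal curvatures proportional to $(3,-1,-1,-1)$. With $S$ constant, $M$ is then isoparametric with $m=2$ and multiplicities $1,3$, so $S=4$ by Simons, and Proposition \ref{localisometryprop} gives local isometry to $\mathbb{S}^1(1/2)\times\mathbb{S}^3(\sqrt3/2)$. Equality in the third bound forces $\lvert A^2\rvert^2=\tfrac14S^2$, i.e.\ eigenvalues $\pm a$ each of multiplicity two. This gives $\mathbb{S}^2(1/\sqrt2)\times\mathbb{S}^2(1/\sqrt2)$, for which $x=1/\pi^2$ and $S=4=4\pi\sqrt{x}$, so equality in the third bound is equivalent to equality in the middle one. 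The middle equality case itself comes from Corollary \ref{corpinch}. The converse directions follow by computing $S$ and $\chi/\mathrm{Vol}$ on the models.

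The step I expect to be most delicate is the root selection in bound (2). I will settle it by invoking Simons' identity: for constant $S$ this gives $S=0$ or $S\ge 4$, and the smaller root is at most $4/3<4$.
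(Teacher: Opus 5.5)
Your proposal is correct and is essentially the paper's proof: the constant-$S$ Chern--Gauss--Bonnet identity combined with the two sides of \eqref{ineq} gives exactly the quadratic inequalities \eqref{pineq} and \eqref{qineq}, and Corollary \ref{corpinch} supplies the middle bound. As in the paper, Simons' identity ($S\geq 4$, while the smaller root of the second quadratic is $\leq 4/3$) selects the larger roots, and $f$ comes from the same pairwise comparisons at $x=9/(25\pi^2)$ and $x=1/\pi^2$.

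The one point to tighten is in the equality cases. There, ``$S=4$ by Simons'' first requires $\nabla A\equiv 0$. This holds because, with two constant principal curvatures, $A_{ijk}$ (totally symmetric by Codazzi) vanishes whenever two of its indices lie in the same eigenspace. Simons' identity and Proposition \ref{localisometryprop} then finish both equality cases.
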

		
		We also exploit volume estimates proven in \cite{chenli} to obtain lower bounds for $S$ in terms of the Euler characteristic $\chi(M)$,
		in the case where the ambient space is $\mathbb{S}^5(1)$ immersed in the standard way in $\mathbb{R}^6$, centered at the origin, 
		and under an assumption on the cross-sectional areas of $M$. We highlight the fact that when $\chi(M)\notin\{0,2,4 \}$ these lower bounds for $S$ improve the best estimate in the literature,
		which, for $n=4$, is $S\geq 4+4/3=16/3$.
		More precisely, we can prove the following 
		\begin{cor} \label{corsboundeuler}
			Under the hypotheses of Theorem \ref{sboundeuler}, if 
			$(N^5(1),g_N)$ is the standard sphere $(\mathbb{S}^5,g_{\mathbb{S}^5})$,
			let $Z_a^M:=M\cap\mathbb{S}_a^4$, where $\mathbb{S}_a^4$ is the intersection
			of $\mathbb{S}^5$ with the hyperplane of $\mathbb{R}^6$ consisting
			of vectors orthogonal to $a\in\mathbb{S}^5$. If
			\[
			\sup_{a\in\mathbb{S}^5} \mathrm{Vol}_g(Z_a^M)\leq\lvert\mathbb{S}^4\rvert,
			\]
			then 
			\[
			S \geq \begin{cases}
				-4+8\sqrt{1-\dfrac{4\chi(M)}{5\pi}}, \quad &\mbox{ if }
				\chi(M)\leq0 \\
				\dfrac{4}{3}+\dfrac{8\sqrt{2}}{3}\sqrt{\dfrac{6\chi(M)}{5\pi}-1}, \quad &\mbox{ if } \chi(M)>2.
			\end{cases}
			\]
			In particular, $S>4+C(4)=16/3$ if $\chi(M)\not\in\{0,2,4\}$.
		\end{cor}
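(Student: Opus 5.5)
Corollary \ref{corsboundeuler} follows by feeding a volume estimate into Theorem \ref{sboundeuler}. The hypothesis $\sup_{a}\mathrm{Vol}_g(Z_a^M)\leq\lvert\mathbb{S}^4\rvert$ is the one under which the results of \cite{chenli} bound the volume of a closed minimal hypersurface of $\mathbb{S}^5$. Since $M$ is not totally geodesic, that bound should read
\[
\mathrm{Vol}_g(M)\leq \tfrac{\pi}{2}\lvert\mathbb{S}^4\rvert=\tfrac{\pi}{2}\cdot\tfrac{8\pi^2}{3}=\tfrac{4\pi^3}{3}.
\]
I expect this comes from a Crofton-type formula: $\mathrm{Vol}_g(M)$ is a constant multiple of the average of $\mathrm{Vol}_g(Z_a^M)$ over $a\in\mathbb{S}^5$. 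Matching the constants produced by this formula is the one point I would check carefully, since everything else is algebra. The target constants give a consistency check. In the first case, $\pi^2\chi(M)/\mathrm{Vol}_g(M)$ must become $4\chi(M)/(5\pi)$, i.e.\ $\mathrm{Vol}_g(M)=5\pi^3/4$. This is (at least nearly) the volume of the Clifford torus $\mathbb{S}^1(1/2)\times\mathbb{S}^3(\sqrt3/2)$, namely $\pi\cdot 2\pi^2(\sqrt3/2)^3=\tfrac{3\sqrt3}{4}\pi^3$. So the precise volume bound must be read off from \cite{chenli}; I will write it as $\mathrm{Vol}_g(M)\leq V_0:=5\pi^3/4$, with $V_0$ whatever \cite{chenli} gives.

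\textbf{Case $\chi(M)\le 0$.} Here $\chi(M)/\mathrm{Vol}_g(M)\leq 0\leq 9/(25\pi^2)$, so Theorem \ref{sboundeuler} gives
\[
S\geq -4+8\sqrt{1-\pi^2\chi(M)/\mathrm{Vol}_g(M)}.
\]
Since $-\chi(M)\ge 0$ and $\mathrm{Vol}_g(M)\le V_0$, we have $-\pi^2\chi(M)/\mathrm{Vol}_g(M)\geq -\pi^2\chi(M)/V_0$. The right-hand side of the bound is increasing in this quantity, so the claimed lower bound follows.

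\textbf{Case $\chi(M)>2$.} By Corollary \ref{signaturehyperlcf}, $\chi(M)$ is even, so $\chi(M)\ge 4$. Then $\chi(M)/\mathrm{Vol}_g(M)\geq 4/V_0 = 16/(5\pi^3)>1/\pi^2$, so we are in the third regime of Theorem \ref{sboundeuler}. That bound is increasing in $\chi/\mathrm{Vol}$, so replacing $\mathrm{Vol}_g(M)$ by $V_0$ gives $\tfrac{3\pi^2\chi}{2V_0}=\tfrac{6\chi}{5\pi}$, which is exactly the stated bound. One should also check that the middle regime of Theorem \ref{sboundeuler}, where $9/(25\pi^2)<\chi/\mathrm{Vol}\le 1/\pi^2$, cannot occur for $\chi\ge4$; this holds because $4/V_0>1/\pi^2$ by the computation above.

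\textbf{The final claim $S>16/3$ for $\chi(M)\notin\{0,2,4\}$.} Since $\chi$ is even, this means $\chi\le -2$ or $\chi\ge 6$.
\begin{itemize}
\item For $\chi\le-2$: $-4+8\sqrt{1+8/(5\pi)}>16/3$ is equivalent to $\sqrt{1+8/(5\pi)}>7/6$, i.e.\ $8/(5\pi)>13/36$. This holds since $8/(5\pi)\approx0.509$.
\item For $\chi\ge6$: $\tfrac43+\tfrac{8\sqrt2}{3}\sqrt{36/(5\pi)-1}$ has $36/(5\pi)\approx 2.29$, so it is roughly $1.33+3.77\cdot1.136\approx5.6>16/3$.
\end{itemize}
The bound is monotone in $\lvert\chi\rvert$, so these boundary cases suffice. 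The main obstacle is therefore only the precise volume constant from \cite{chenli}; the rest is the monotonicity bookkeeping above.
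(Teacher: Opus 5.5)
Your argument is the paper's: a volume bound from \cite{chenli} fed into Theorem~\ref{sboundeuler}, evenness of $\chi(M)$ to pass from $\chi(M)>2$ to $\chi(M)\ge 4$, monotonicity in $\chi(M)/\mathrm{Vol}_g(M)$, and the numerical checks at $\chi(M)=-2$ and $\chi(M)=6$. All of that bookkeeping is correct.

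The one step you do not establish is the volume bound. You back-solved it from the target after a wrong first guess. The input the paper uses is \cite[Theorem 1.1]{chenli}:
\[
\mathrm{Vol}_g(M)\le\frac{5\lvert\mathbb{S}^5\rvert}{4\lvert\mathbb{S}^4\rvert}\sup_{a\in\mathbb{S}^5}\mathrm{Vol}_g(Z_a^M).
\]
Under the hypothesis this gives $\mathrm{Vol}_g(M)\le\frac54\lvert\mathbb{S}^5\rvert=\frac{5\pi^3}{4}$, which is exactly your $V_0$.

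Your first guess $\frac{\pi}{2}\lvert\mathbb{S}^4\rvert=\frac{4\pi^3}{3}$ would not suffice. At $\chi(M)=4$ it only gives $\chi(M)/\mathrm{Vol}_g(M)\ge 3/\pi^3<1/\pi^2$, so the third regime of Theorem~\ref{sboundeuler} is not reached. For $\chi(M)\le 0$ it gives the weaker constant $\frac{3}{4\pi}$ in place of $\frac{4}{5\pi}$.

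Your Crofton instinct is sound once the constant is computed. The spherical Crofton formula reads
\[
\int_{\mathbb{S}^5}\mathrm{Vol}_g(Z_a^M)\,da=\frac{\lvert\mathbb{S}^5\rvert\lvert\mathbb{S}^3\rvert}{\lvert\mathbb{S}^4\rvert}\mathrm{Vol}_g(M),
\]
and with the hypothesis it gives $\mathrm{Vol}_g(M)\le\lvert\mathbb{S}^4\rvert^2/\lvert\mathbb{S}^3\rvert=\frac{32\pi^2}{9}$. This is smaller than $\frac{5\pi^3}{4}$, so by your own monotonicity remarks it would also prove the corollary.

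Finally, the Clifford comparison is spurious. Since $\frac{3\sqrt3}{4}\pi^3>\frac{5\pi^3}{4}$, the hypersurface $\mathbb{S}^1(1/2)\times\mathbb{S}^3(\sqrt3/2)$ does not even satisfy the hypothesis of the corollary.
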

		Lastly, in Section \ref{bochnersection}, we derive Bochner--Weitzenb\"{o}ck formulas for the second fundamental form and exploit them to derive rigidity results in terms of integral inequalities. First, we provide sharp integral bounds for a general minimal hypersurface in a positive space form $(N^{n+1},g_N)$, also characterizing the totally geodesic and Clifford-type hypersurfaces:
		\begin{theorem} \label{sharpquadrbound}
			Let $(M^n,g)$ be a closed, minimally immersed hypersurface in a space form $(N^{n+1}(1),g_N)$, with $n\geq 4$. Then, 
			\small
			\begin{equation} \label{boundasquare}
				\int_M\lvert\nabla S\rvert^2+2\left(\lvert A^2\rvert^2-\sqrt{\dfrac{n^2-3n+3}{n(n-1)}}S^2\right)(S-n)dV_g\leq\int_M\lvert\nabla A^2\rvert^2dV_g\leq \dfrac{4}{3}\int_M\lvert A^2\rvert^2(S-n)dV_g.
			\end{equation}
			\normalsize
			Moreover, equality holds on either side of \eqref{boundasquare} if and only if $(M^n,g)$ is either totally geodesic or locally isometric to a Clifford hypersurface. 
		\end{theorem}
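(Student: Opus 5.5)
The plan is to combine Simons' identity for the second fundamental form with a Bochner formula for $A^2$, and then use two pointwise algebraic estimates on the cross term coming from $\nabla A$.

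\textbf{Step 1 (Bochner formula for $A^2$).} For a minimal hypersurface in $N^{n+1}(1)$, Simons' identity gives $\Delta A=(n-S)A$ and $\tfrac12\Delta S=\lvert\nabla A\rvert^2+S(n-S)$. Integrating the latter yields
\[
\int_M\lvert\nabla A\rvert^2\,dV_g=\int_M S(S-n)\,dV_g.
\]
Differentiating $A^2=A\circ A$ twice gives
\[
\Delta A^2=2(n-S)A^2+2\sum_k(\nabla_kA)^2 .
\]
Hence
\[
\tfrac12\Delta\lvert A^2\rvert^2=\lvert\nabla A^2\rvert^2+2(n-S)\lvert A^2\rvert^2+2T,
\qquad
T:=\operatorname{tr}\Bigl(A^2\sum_k(\nabla_kA)^2\Bigr)\geq 0 .
\]
Integrating over the closed manifold $M$ gives the key identity
\[
\int_M\lvert\nabla A^2\rvert^2\,dV_g=2\int_M\lvert A^2\rvert^2(S-n)\,dV_g-2\int_M T\,dV_g .
\]

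\textbf{Step 2 (upper bound).} Expand $\nabla_k A^2=\nabla_kA\,A+A\,\nabla_kA$ to get
\[
\lvert\nabla_kA^2\rvert^2=2\operatorname{tr}\bigl(A^2(\nabla_kA)^2\bigr)+2\operatorname{tr}\bigl(\nabla_kA\,A\,\nabla_kA\,A\bigr).
\]
For symmetric $X,Y$ the Cauchy--Schwarz inequality gives $\operatorname{tr}(XYXY)\le\operatorname{tr}(X^2Y^2)$. Therefore $\lvert\nabla A^2\rvert^2\le 4T$ pointwise. Inserting $-2\int_M T\le-\tfrac12\int_M\lvert\nabla A^2\rvert^2$ into the key identity gives $\tfrac32\int_M\lvert\nabla A^2\rvert^2\le 2\int_M\lvert A^2\rvert^2(S-n)$, which is the right-hand inequality of \eqref{boundasquare}.

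\textbf{Step 3 (lower bound).} Bound $T$ from above by
\[
T\le\lvert A^2\rvert\,\Bigl\lvert\sum_k(\nabla_kA)^2\Bigr\rvert\le\lvert A^2\rvert\,\lvert\nabla A\rvert^2\le cS\lvert\nabla A\rvert^2,
\qquad c=\sqrt{\tfrac{n^2-3n+3}{n(n-1)}} .
\]
The last step uses the sharp inequality $\lvert A^2\rvert^2\le\frac{n^2-3n+3}{n(n-1)}S^2$ for trace-free symmetric matrices, i.e. the general Case--Tyrrell inequality, which reduces to $7/12$ when $n=4$. Alternatively one can use $\lambda_{\max}^2\le\frac{n-1}{n}S\le cS$. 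Then, by Simons' identity and integration by parts,
\[
\int_M S\lvert\nabla A\rvert^2\,dV_g=\int_M S\bigl(\tfrac12\Delta S+S(S-n)\bigr)\,dV_g=-\tfrac12\int_M\lvert\nabla S\rvert^2\,dV_g+\int_M S^2(S-n)\,dV_g .
\]
Substituting this into the key identity gives
\[
\int_M\lvert\nabla A^2\rvert^2\,dV_g\ge\int_M\lvert\nabla S\rvert^2\,dV_g+2\int_M\bigl(\lvert A^2\rvert^2-cS^2\bigr)(S-n)\,dV_g ,
\]
which is the left-hand inequality of \eqref{boundasquare}.

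\textbf{Step 4 (equality).} This is the delicate part. Equality on either side forces equality pointwise wherever $\nabla A\neq0$.
\begin{enumerate}
\item On the right, this means $\operatorname{tr}\bigl((A\nabla_kA-\nabla_kA\,A)^2\bigr)=0$, i.e. $\nabla_kA$ commutes with $A$.
\item On the left, it means equality in the Cauchy--Schwarz steps and in the sharp algebraic inequality, which pins down the eigenvalue structure of $A$.
\end{enumerate}
I would aim to show that these conditions, combined with the Codazzi symmetry of $\nabla A$, force $\nabla A\equiv0$ on the open set where $A\neq0$. Unique continuation then gives either $A\equiv0$ or $\nabla A\equiv0$ everywhere. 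At that point the local Chern--do Carmo--Kobayashi/Lawson rigidity result (Proposition~\ref{localisometryprop}) yields that $M$ is totally geodesic or locally a Clifford hypersurface. The converse is a direct check, since $\nabla A=0$ and $S=n$ for the Clifford hypersurfaces.

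The main obstacle is the equality analysis on the left side. Equality in the algebraic bound occurs only for a specific eigenvalue configuration (one eigenvalue against $n-1$ equal ones), and this must be reconciled with Codazzi to conclude that $\nabla A=0$. I would first try to show that commutation of $\nabla_kA$ with $A$, together with the Codazzi equations, kills all components $h_{ijk}$ with distinct principal directions. I would then use $\int_M\lvert\nabla A\rvert^2=\int_M S(S-n)$ to handle the remaining components.
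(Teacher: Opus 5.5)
Your Steps 1--3 follow the paper's argument. For the upper bound, the paper integrates $A^2_{ij}A_{ikt}A_{jkt}$ by parts and applies Cauchy--Schwarz to the tensor $A_{is}A_{sjk}$. That is exactly your inequality $\operatorname{tr}(\nabla_kA\,A\,\nabla_kA\,A)\le\operatorname{tr}(A^2(\nabla_kA)^2)$ summed over $k$; your pointwise expansion of $\lvert\nabla_kA^2\rvert^2$ only saves the extra integration by parts. For the lower bound, the paper uses the same chain $T\le\lvert A^2\rvert\,\lvert K\rvert\le c\,S\lvert\nabla A\rvert^2$, with $K=\sum_k(\nabla_kA)^2$.

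The gap is the last line of Step 3. Since $-2\int_MT\,dV_g\ge-2c\int_MS\lvert\nabla A\rvert^2dV_g=c\int_M\lvert\nabla S\rvert^2dV_g-2c\int_MS^2(S-n)\,dV_g$, substituting into your key identity gives
\[
\int_M\lvert\nabla A^2\rvert^2dV_g\ge c\int_M\lvert\nabla S\rvert^2dV_g+2\int_M\left(\lvert A^2\rvert^2-cS^2\right)(S-n)\,dV_g,
\]
with $c=\sqrt{\tfrac{n^2-3n+3}{n(n-1)}}<1$, not $1$, in front of $\int_M\lvert\nabla S\rvert^2$. This is strictly weaker than the left side of \eqref{boundasquare}. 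The paper's proof makes the identical slip: its display just before ``hence, the left-hand side'' carries the same factor.

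The factor cannot be removed by a sharper estimate. Take one of Otsuki's closed minimal hypersurfaces from Remark~\ref{remarklcf} in $N^5(1)$, with principal curvatures $\lambda,\lambda,\lambda,-3\lambda$ and $\lambda$ non-constant.
\begin{itemize}
\item Codazzi leaves only $A_{ii4}=A_{i4i}=A_{4ii}=e_4(\lambda)$ for $i\le3$ and $A_{444}=-3e_4(\lambda)$.
\item Pointwise, $\lvert\nabla A^2\rvert^2=360\lambda^2\lvert\nabla\lambda\rvert^2$, $\lvert\nabla S\rvert^2=576\lambda^2\lvert\nabla\lambda\rvert^2$, $S\lvert\nabla A\rvert^2=216\lambda^2\lvert\nabla\lambda\rvert^2$ and $\lvert A^2\rvert^2=\tfrac{7}{12}S^2$.
\item Set $I=\int_M\lambda^2\lvert\nabla\lambda\rvert^2dV_g>0$. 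Simons' identity multiplied by $S$ gives $\int_MS^2(S-4)\,dV_g=504I$.
\item The printed left-hand side of \eqref{boundasquare} is then $(576-1008\,c(1-c))I\approx394I$, which exceeds $\int_M\lvert\nabla A^2\rvert^2dV_g=360I$.
\end{itemize}
So what you and the paper actually prove is the displayed inequality, and the theorem should carry that factor.

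Your Step 4 is only a plan. For equality on the right, your commutation condition suffices, and it is more elementary than the paper's route (the tensor $A_{is}A_{sjk}$ is totally symmetric, so $A^2$ is Codazzi, then Umehara's theorem):
\begin{itemize}
\item $[\nabla_kA,A]=0$ gives $(\lambda_i-\lambda_j)A_{ijk}=0$, so by total symmetry $A_{ijk}=0$ unless $\lambda_i=\lambda_j=\lambda_k$.
\item On the dense open set where multiplicities are locally constant, $A_{ijk}=\delta_{ij}e_k(\lambda_\alpha)$ for $i,j$ in an eigenspace $E_\alpha$. Codazzi kills $e_k(\lambda_\alpha)$ when $\dim E_\alpha\ge2$.
\item For simple eigenvalues, $A_{iii}=\nabla_iH-\sum_{j\ne i}A_{jji}=0$.
\item Hence $\nabla A=0$ there, and by continuity $\nabla A\equiv0$ everywhere.
\end{itemize}

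For equality on the corrected left side, the missing idea is the paper's rank argument. At a point with $A\ne0\ne\nabla A$, equality in $\langle A^2,K\rangle\le\lvert A^2\rvert\,\lvert K\rvert$ and in $\lvert K\rvert\le\operatorname{tr}K$ forces $K=\mu A^2$ with $\mu>0$ and $K$ of rank one. Then $A$ would have rank one, which is impossible for a trace-free $A\ne0$. Hence $S\lvert\nabla A\rvert^2\equiv0$. Since $S$ is locally constant on the open set $\{S\ne0\}$, connectedness gives $A\equiv0$ or $\nabla A\equiv0$. The eigenvalue configuration from the Case--Tyrrell equality plays no role.
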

		As an easy consequence, we also provide estimates in the case where $S\geq n$:
		\begin{cor} \label{sharpquadrcor}
			If $S\geq n$, then
			\small
			\begin{equation} \label{refinedboundasquare}
				\int_M\lvert\nabla S\rvert^2-2\left(\dfrac{1}{n}-\sqrt{\dfrac{n^2-3n+3}{n(n-1)}}\right)S^2(S-n)dV_g\leq\int_M\lvert\nabla A^2\rvert^2dV_g\leq 
				\dfrac{4(n^2-3n+3)}{3n(n-1)}\int_MS^2(S-n)dV_g.
			\end{equation}
			\normalsize
			Moreover, equality holds on either side of \eqref{refinedboundasquare} if and only if $(M^n,g)$ is locally isometric to a Clifford hypersurface. 
		\end{cor}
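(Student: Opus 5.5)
The plan is to derive both sides of \eqref{refinedboundasquare} from \eqref{boundasquare} in Theorem \ref{sharpquadrbound}. The only extra input is a pair of pointwise algebraic estimates for $\lvert A^2\rvert^2$ in terms of $S$. These apply because $A$ is trace-free, since $H=0$. Multiplying them by the factor $(S-n)$, which is non-negative by hypothesis, keeps the integrands comparable pointwise.

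First I would record the two estimates. At each point, let $k_1,\dots,k_n$ be the principal curvatures, so $\sum_i k_i=0$, $S=\sum_i k_i^2$ and $\lvert A^2\rvert^2=\sum_i k_i^4$.
\begin{itemize}
\item The lower estimate is Cauchy--Schwarz, $\lvert A^2\rvert^2\geq \frac{1}{n}S^2$, with equality iff $\lvert k_i\rvert$ is independent of $i$.
\item The upper estimate is the sharp bound for trace-free symmetric matrices, $\lvert A^2\rvert^2\leq \frac{n^2-3n+3}{n(n-1)}S^2$ (the inequality of \cite{case2023sharp} type). It is obtained by Lagrange multipliers on $\sum k_i^4$ subject to $\sum k_i=0$ and $\sum k_i^2=S$. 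Equality holds iff $n-1$ of the $k_i$ coincide.
\end{itemize}

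For the right-hand side of \eqref{refinedboundasquare}, I would insert the upper estimate into $\frac43\int_M\lvert A^2\rvert^2(S-n)\,dV_g$ from \eqref{boundasquare}. This uses $S-n\geq 0$ and gives the constant $\frac{4(n^2-3n+3)}{3n(n-1)}$ directly.

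For the left-hand side, I would bound from below the term $2\bigl(\lvert A^2\rvert^2-\sqrt{\tfrac{n^2-3n+3}{n(n-1)}}S^2\bigr)(S-n)$ appearing in \eqref{boundasquare}. Substituting the Cauchy--Schwarz estimate, again using $S-n\geq0$, gives the integrand $\lvert\nabla S\rvert^2+2\bigl(\tfrac1n-\sqrt{\cdot}\bigr)S^2(S-n)$. I would then bring this into the form displayed in \eqref{refinedboundasquare}. This bookkeeping of the coefficient of $S^2(S-n)$, keeping careful track of the sign of $\frac1n-\sqrt{\cdot}$, is the step I expect to need the most care. If the direct substitution does not produce exactly the displayed coefficient, I would first check whether it can be absorbed using the integrated form of the pinching, $\int_M\bigl(\lvert\nabla A\rvert^2-S(n-S)\bigr)dV_g=0$, which follows from Simons' identity.

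For the equality cases, any equality in \eqref{refinedboundasquare} forces one of two things at each point. Either $S=n$ there, or both the relevant algebraic estimate is an equality and \eqref{boundasquare} is an equality. By Theorem \ref{sharpquadrbound}, equality in \eqref{boundasquare} forces $M$ to be totally geodesic or locally Clifford. The totally geodesic case is excluded because $S=0<n$. Conversely, for the Clifford hypersurfaces $S=n$, so every term containing the factor $(S-n)$ vanishes. Moreover $\nabla S=0$ and $\nabla A^2=0$, since the principal curvatures are constant with parallel eigendistributions. Hence all three members of \eqref{refinedboundasquare} vanish and both equalities hold, which completes the characterization.
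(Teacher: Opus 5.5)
Your route is the paper's. Its proof of the corollary inserts $\lvert A^2\rvert^2\ge\frac1n S^2$ and $\lvert A^2\rvert^2\le\kappa_nS^2$, with $\kappa_n:=\frac{n^2-3n+3}{n(n-1)}$, into the two sides of \eqref{boundasquare}, multiplied by $S-n\ge0$. It then reads the equality cases off Theorem \ref{sharpquadrbound}.

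The step you flagged has to be settled differently from what you suggest. Your substitution gives
\[
\int_M\Bigl(\lvert\nabla S\rvert^2+2\Bigl(\frac1n-\sqrt{\kappa_n}\Bigr)S^2(S-n)\Bigr)dV_g\le\int_M\lvert\nabla A^2\rvert^2dV_g,
\]
and this is exactly what the paper's one-line argument yields. The minus sign in the displayed statement is a misprint. Do not try to absorb the discrepancy with Simons' identity: the printed inequality is false.

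To see this, take constant $S>n$. The printed version would give $\int_M\lvert\nabla A^2\rvert^2dV_g\ge 2(\sqrt{\kappa_n}-\frac1n)S^2(S-n)\,\mathrm{Vol}_g(M)$. The right-hand inequality, which is correct, bounds the same integral above by $\frac43\kappa_nS^2(S-n)\,\mathrm{Vol}_g(M)$. But $2(\sqrt{\kappa_n}-\frac1n)>\frac43\kappa_n$ for every $n\ge4$; for $n=4$ this is about $1.03$ against $0.78$. Hence Cartan's minimal isoparametric hypersurface in $\mathbb{S}^5$, with four distinct principal curvatures and $S\equiv12$, is a counterexample to the printed version.

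Separately, the integrated Simons identity you quote has a sign slip. It reads $\int_M\bigl(\lvert\nabla A\rvert^2-S(S-n)\bigr)\,dV_g=0$.

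For the equality cases, argue through the chain rather than pointwise. If the outer members agree, every intermediate inequality is an equality, in particular \eqref{boundasquare} itself. That is a global condition and it is always forced, not only at points where $S\neq n$. Theorem \ref{sharpquadrbound} then gives totally geodesic or locally Clifford, and $S\ge n>0$ excludes the first. Your converse check for the Clifford hypersurfaces ($S\equiv n$, $\nabla A\equiv0$) is correct.
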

		Then, we study natural conditions on the Weyl tensor in the four-dimensional case which are weaker than local conformal flatness: first, we deal with \emph{half harmonic Weyl} metrics, i.e. metrics whose (anti)-self-dual Weyl tensor is divergence-free. This condition is particularly interesting because it is satisfied both by Einstein and half conformally flat metrics and it is also intimately related to the geometry of K\"{a}hler surfaces
		(see e.g. \cite{garciario, Derdzinski, lebrunconfkahl, PengWu, wuwuwylie}). So far, to the best of our knowledge, no classification result for half harmonic Weyl hypersurfaces has been proven (although, the study of harmonic curvature hypersurfaces has lead to strong conclusions, see e.g. \cite{umehara}). By means of a famous Bochner formula derived by Derdzi\'{n}ski \cite{Derdzinski}, we obtain the following integral rigidity result 
		\begin{theorem} \label{harmweylinequality}
			Let $(M^4,g)$ be a closed, oriented, minimally immersed hypersurface in a space form $(N^5(c),g_N).$ Then, if $(M^4,g)$ is half harmonic Weyl, then either 
			$(M^4,g)$ is locally conformally flat or the 
			following inequality holds:
			\begin{equation} \label{sharpinequality}
				c\int_M\left(\dfrac{7}{3}S^2-
				4\lvert A^2\rvert^2\right)dV_g\leq\int_M\left(
				5[\operatorname{tr}(A^3)]^2-14\operatorname{tr}(A^6)
				+\dfrac{55}{6}S
				\lvert A^2\rvert^2-\dfrac{13}{12}S^3\right)dV_g.
			\end{equation}
			Moreover, 
			the equality in \eqref{sharpinequality} is achieved if 
			and only if $c=1$ and $(M,g)$ is locally isometric to the Clifford hypersurface $
			\mathbb{S}^2(1/\sqrt{2})\times \mathbb{S}^2(1/{\sqrt{2}})$.
		\end{theorem}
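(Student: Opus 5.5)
We would start from Derdzi\'{n}ski's Bochner--Weitzenb\"{o}ck formula for a half harmonic Weyl metric: if $\delta\operatorname{W}^+\equiv 0$, then
\[
\dfrac12\Delta\lvert\operatorname{W}^+\rvert^2=\lvert\nabla\operatorname{W}^+\rvert^2+\dfrac{R}{2}\lvert\operatorname{W}^+\rvert^2-18\det\mathcal{W}^+ .
\]
By the refined Kato inequality for harmonic self-dual Weyl tensors one may even use $\lvert\nabla\operatorname{W}^+\rvert^2\geq\frac53\lvert\nabla\lvert\operatorname{W}^+\rvert\rvert^2$, but for the inequality as stated it is enough to drop the gradient term. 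Integrating over the closed manifold $M$ then gives
\[
\int_M\Bigl(\dfrac R2\lvert\operatorname{W}^+\rvert^2-18\det\mathcal{W}^+\Bigr)dV_g\leq 0 .
\]
The sign of $\det\mathcal W^+$ and the orientation issue are handled the same way for $\operatorname{W}^-$, since Theorem \ref{equalweylnormslcf} gives $\lvert\operatorname{W}^+\rvert^2=\lvert\operatorname{W}^-\rvert^2$.

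The next step is to express everything extrinsically using the explicit formulas of Section \ref{prelim}, with $H=0$:
\begin{itemize}
\item by the Gauss equation, $R=12c-S$;
\item $\lvert\operatorname{W}^+\rvert^2=\frac12\lvert\operatorname{W}\rvert^2$ is a combination of $\lvert A^2\rvert^2$ and $S^2$, and $c$ drops out because the ambient curvature contributes only to the pure-trace part;
\item in a principal frame with curvatures $\lambda_1,\dots,\lambda_4$, $\sum\lambda_i=0$, the eigenvalues of $\mathcal W^+$ are (up to normalization) of the form $\frac13(\lambda_1\lambda_2+\lambda_3\lambda_4)-\frac13(\lambda_1\lambda_3+\lambda_2\lambda_4)$, etc.
\end{itemize}
Hence $\det\mathcal W^+$ is a symmetric sextic polynomial in the $\lambda_i$. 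I would rewrite it through Newton power sums $p_2=S$, $p_3=\operatorname{tr}(A^3)$, $p_4=\lvert A^2\rvert^2$, $p_6=\operatorname{tr}(A^6)$ with $p_1=0$. This should produce exactly the combination $5p_3^2-14p_6+\frac{55}{6}p_2p_4-\frac{13}{12}p_2^3$ on the right of \eqref{sharpinequality}. The $c$-term $c\,(\frac73S^2-4\lvert A^2\rvert^2)$ should come from $6c\lvert\operatorname{W}^+\rvert^2$, and $-\frac S2\lvert\operatorname{W}^+\rvert^2$ should contribute to the sextic part. Positive normalizing constants can be absorbed.

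The main obstacle is this algebraic bookkeeping. One must verify the sextic identity for $\det\mathcal W^+$ in terms of power sums, using $p_1=0$ and Newton's identities to eliminate $p_5$-type terms. One must also check that the lower-order term has the stated sign and coefficients. A good check is the Clifford case $\lambda=(1,1,-1,-1)$ with $c=1$: the two sides should agree there, since equality is claimed.

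For the dichotomy: if $\operatorname{W}^+\not\equiv0$ the inequality is what we derived. If $\operatorname{W}^+\equiv0$, then by Theorem \ref{equalweylnormslcf} also $\operatorname{W}^-\equiv0$, so $M$ is locally conformally flat.

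For equality we need $\nabla\operatorname{W}^+\equiv0$. Then $\lvert\operatorname{W}^+\rvert$ and the eigenvalues of $\mathcal W^+$ are constant and $\mathcal W^+\neq0$. Moreover $\mathcal W^+$ parallel and nonzero forces two coinciding eigenvalues, so $w=2$. By Theorem \ref{eigentheoremlcf}(1), either $m=3$ or there are two principal curvatures each of multiplicity two. To reach the isoparametric case I would use that parallel $\operatorname{W}^+$ makes $\det\mathcal W^+$ and $\lvert\operatorname{W}\rvert^2$ constant. I would then feed this into Codazzi, or use the pointwise identity, to get $S$ constant, and apply Theorem \ref{eigentheoremlcf}(2). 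Minimality and Cartan's nonexistence result for $m=3$ in $N^5(1)$ (Remark \ref{remarklcf}) then leave $c=1$ and $\mathbb{S}^2(1/\sqrt2)\times\mathbb{S}^2(1/\sqrt2)$; the cases $c\le0$ are excluded because no such minimal product exists. Conversely, for this Clifford product $\nabla A=0$, it is Einstein, hence half harmonic Weyl, and the integrand vanishes identically.
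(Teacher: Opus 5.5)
Your architecture matches the paper's proof. You integrate Derdzi\'{n}ski's Weitzenb\"{o}ck formula for $\operatorname{W}^+$, drop the gradient term, and substitute $R=12c-S$ together with extrinsic expressions for $\lvert\operatorname{W}^+\rvert^2$ and the cubic term. In the equality case $\nabla\operatorname{W}^+\equiv0$, the pointwise identity makes $R$ constant, Theorem \ref{eigentheorem}(2) gives isoparametricity, and the classification finishes. Your coefficient $18$ is consistent if $\lvert\operatorname{W}^+\rvert^2$ is read as $\sum\nu_a^2$, where $\nu_a=2W^+_{ijij}$ are the eigenvalues of $\mathcal W^+$ in a principal frame. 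In the paper's tensor norm this reads $\lvert\operatorname{W}^+\rvert^2=4\sum\nu_a^2$ and $W^+_{ijkl}W^+_{pqkl}W^+_{ijpq}=24\det\mathcal W^+$.

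You differ from the paper, to your advantage, in excluding $m=4$. A parallel, nonzero $\operatorname{W}^+$ cannot have three distinct eigenvalues: the eigenlines would be parallel, so $\Lambda_+$ would be flat, forcing $\operatorname{W}^++\frac{R}{12}\mathrm{Id}=0$ and hence $\operatorname{W}^+=0$. So $w=2$, and Theorem \ref{eigentheorem}(1) gives $m\le3$ at once. The paper instead first proves isoparametricity and then shows that $m=4$ would force $\nabla A\equiv0$.

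The gap is in the central computation. You only assert that it ``should'' reproduce the right-hand side, and the one concrete ingredient you give for it is wrong. By \eqref{weyleigen} with $H=0$,
\[
\nu_{ij}=2W^+_{ijij}=\tfrac12(\lambda_i+\lambda_j)^2-\tfrac16S=\tfrac12(\lambda_i\lambda_j+\lambda_k\lambda_l)+\tfrac1{12}S,\qquad \{i,j,k,l\}=\{1,2,3,4\}.
\]
This is not $\frac13[(\lambda_1\lambda_2+\lambda_3\lambda_4)-(\lambda_1\lambda_3+\lambda_2\lambda_4)]$ and its cyclic analogues. At the Clifford point $(1,1,-1,-1)$ your expressions give $(\frac43,0,-\frac43)$, three distinct values, which contradicts Theorem \ref{eigentheorem}(1). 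The true eigenvalues there are $(\frac43,-\frac23,-\frac23)$, so a sextic built from your formula would not match.

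With the correct $\nu_{ij}$ one checks that $\sum\nu^2=\frac7{24}S^2-\frac12\lvert A^2\rvert^2$ and
\[
24\det\mathcal W^+=5[\operatorname{tr}(A^3)]^2-14\operatorname{tr}(A^6)+\tfrac{19}{2}S\lvert A^2\rvert^2-\tfrac{23}{18}S^3 .
\]
This is the paper's value of $W^+_{ijkl}W^+_{pqkl}W^+_{ijpq}$, which the paper obtains by contracting the Fialkow form \eqref{weylfialkow} rather than via eigenvalues. Multiplying $\int(\frac R2\sum\nu^2-18\det\mathcal W^+)\,dV_g\le0$ by $\frac43$ then gives exactly \eqref{sharpinequality}. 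This identity is the actual content of the inequality and must be proved, for instance by expanding $\nu_{12}\nu_{13}\nu_{14}$ in power sums with $p_1=0$.

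Two smaller points in the classification also need filling in:
\begin{enumerate}
\item Cartan's exclusion of $m=3$ is needed for every $c$, not only $c=1$; for $c\le0$, isoparametric hypersurfaces have $m\le2$.
\item For $m=2$, Cartan's identity $\lambda_1\lambda_2=-c$ with $\lambda_2=-\lambda_1$ gives $c=\lambda_1^2>0$. Hence $c=1$ and $S\equiv4$, which is what Proposition \ref{localisometryprop} requires before you can conclude local isometry to $\mathbb{S}^2(1/\sqrt2)\times\mathbb{S}^2(1/\sqrt2)$.
\end{enumerate}
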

		
		Finally, we deal with \emph{Bach-flat} metrics on four-dimensional hypersurfaces, i.e. critical points of the Weyl functional, which is $||\operatorname{W}||_{L^2}^2$. The Bach tensor $\operatorname{B}$ is a symmetric, trace-free $(0,2)$-tensor, which, in dimension four, also happens to be conformally covariant: up to now, the
		relations between the existence of Bach-flat metrics and the topology of the underlying four-manifold are not well understood, although some strong rigidity and classification results have been obtained (see e.g. \cite{caocatino, caochen, Chang-gursky-yang-annals, chang2003conformally, lebrunkahler}). Examples
		of Bach-flat metrics are provided by conformally Einstein and half conformally flat metrics, although, to the best of our knowledge, 
		there only exists one example which does not belong to either of these categories \cite{abbena}. 
		In this paper, we start the study of the Bach-flat condition for 
		a minimal hypersurface immersed in a space form: we recover the 
		well-known Weitzenb\"{o}ck formula derived in 
		\cite{Chang-gursky-yang-annals} (see \eqref{secondbochnerbach}) 
		and, by means of a well-known algebraic inequality for 
		trace-free, symmetric matrices \cite{case2023sharp, huisken},
		we prove two sharp integral inequalities for the second 
		fundamental form: 
		\begin{theorem} \label{bachident}
			Let $(M^4,g)$ be a closed, oriented, minimally immersed
			hypersurface in a space form $(N^5(c),g_N)$. Assume that $(M,g)$
			is a Bach-flat manifold. Then, 
			the following identities hold:
			\begin{align}
				A_{ij}A_{ikl}A_{jkl}&=\operatorname{tr}(A^5)-
				\left(2c+\dfrac{1}{3}S\right)\operatorname{tr}(A^3)+\dfrac{1}{6}A_{ij}S_{ij};
				\label{firstbochnerbach} \\
				\dfrac{1}{2}\Delta\lvert A^2\rvert^2&=
				\lvert\nabla A^2\rvert^2+2\operatorname{tr}(A^6)-
				2\left[\operatorname{tr}(A^3)\right]^2-
				\dfrac{7}{6}S\lvert A^2\rvert^2+
				\label{secondbochnerbach}\\ 
				&+\dfrac{1}{6}S^3+c\left(4
				\lvert A^2\rvert^2-S^2\right)+
				\dfrac{1}{3}S_{ij}A_{ij}^2+\dfrac{1}{6}S\Delta S\notag,
			\end{align}
			where $S_{ij}$ are the local components of $\operatorname{Hess}S$.
			In particular, we obtain the following integral inequalities:
			\begin{equation} \label{bachintegral1}
				-\dfrac{5\sqrt{3}}{18}\int_MS^{\frac{5}{2}}dV_g\leq\int_M\operatorname{tr}(A^5)dV_g\leq 
				\dfrac{5\sqrt{3}}{18}\int_MS^{\frac{5}{2}}dV_g;
			\end{equation}
			\begin{equation} \label{bachintegral2}
				\int_M\lvert\nabla A^2\rvert^2-\dfrac{1}{3}\lvert\nabla S\rvert^2dV_g\leq 
				\int_M\left[\dfrac{85}{72}S^3-2\operatorname{tr}(A^6)-
				c(4\lvert A^2\rvert^2-S^2)\right]dV_g.
			\end{equation}
			Moreover, equality holds in either side of \eqref{bachintegral1} and in \eqref{bachintegral2} if and only if $(M^4,g)$ is locally conformally flat. 
		\end{theorem}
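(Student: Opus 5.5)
The plan is to write the Bach-flat condition entirely in terms of the second fundamental form, contract it to get the two pointwise identities, and then integrate them. Everything rests on three standing facts for a minimal hypersurface in $N^5(c)$. The Gauss equation gives $\operatorname{Ric}=3c\,g-A^2$, so $\mathring{\operatorname{Ric}}=-(A^2-\tfrac{S}{4}g)$ and $R=12c-S$. Codazzi makes $A_{ijk}$ totally symmetric, with $A_{iik}=0$. Simons' identity reads $\Delta A_{ij}=(4c-S)A_{ij}$.

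\textbf{The identities.} In dimension four I write
\[
\operatorname{B}_{ij}=\nabla_k\operatorname{C}_{ijk}+\operatorname{P}_{kl}\operatorname{W}_{ikjl},
\]
where $\operatorname{P}$ is the Schouten tensor and $\operatorname{C}$ the Cotton tensor. With $\operatorname{P}=\tfrac12(\operatorname{Ric}-\tfrac{R}{6}g)$ given in terms of $A^2$ and $S$, the divergence of the Cotton tensor becomes $-\tfrac12\big(\Delta A^2_{ij}-\nabla_k\nabla_jA^2_{ik}\big)$ plus Hessian-of-$S$ terms.

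\begin{enumerate}
\item Commute derivatives in $\nabla_k\nabla_jA^2_{ik}$ using the Ricci identities and the Gauss equation.
\item Expand $\Delta A^2=A\Delta A+(\Delta A)A+2A_{ikl}A_{jkl}$ and insert Simons' identity.
\item Express $\operatorname{W}$ through $A$ via \eqref{weylconst}, and compute the term $\operatorname{P}_{kl}\operatorname{W}_{ikjl}$ explicitly in powers of $A$.
\end{enumerate}

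Setting $\operatorname{B}=0$ then gives a formula for $A_{ikl}A_{jkl}$ (tracefree part) in terms of $A^3,A^4,\operatorname{Hess}S$ and $c$. Contracting with $A_{ij}$ should yield \eqref{firstbochnerbach}. For \eqref{secondbochnerbach}, I start from $\tfrac12\Delta\lvert A^2\rvert^2=\lvert\nabla A^2\rvert^2+A^2_{ij}\Delta A^2_{ij}$ and substitute the same expression contracted against $A^2_{ij}$. The main obstacle is this bookkeeping: getting all signs in the Bach/Cotton conventions right, and correctly reducing the quartic Weyl–Schouten contraction to traces of powers of $A$. I would double-check the resulting coefficients on the Clifford example $\mathbb{S}^2\times\mathbb{S}^2$.

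\textbf{Integral inequality \eqref{bachintegral1}.} First I integrate \eqref{firstbochnerbach}. The Hessian term vanishes, because $\int A_{ij}S_{ij}=\int S\,\nabla_j\nabla_iA_{ij}=0$, since $\operatorname{div}A=\nabla H=0$. To eliminate $\int A_{ij}A_{ikl}A_{jkl}$, I integrate
\[
\Delta\operatorname{tr}(A^3)=3A^2_{ij}\Delta A_{ij}+6A_{ij}A_{ikl}A_{jkl},
\]
which gives $\int A_{ij}A_{ikl}A_{jkl}=\tfrac12\int(S-4c)\operatorname{tr}(A^3)$. Substituting, the $c$-terms cancel and
\[
\int_M\operatorname{tr}(A^5)\,dV_g=\tfrac56\int_M S\operatorname{tr}(A^3)\,dV_g.
\]
Okumura's sharp inequality $\lvert\operatorname{tr}(A^3)\rvert\le\tfrac{1}{\sqrt3}S^{3/2}$ for tracefree $4\times4$ matrices then gives \eqref{bachintegral1}.

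\textbf{Integral inequality \eqref{bachintegral2}.} I integrate \eqref{secondbochnerbach}. Codazzi gives $\nabla_iA^2_{ij}=A_{ik}A_{ikj}=\tfrac12\nabla_jS$, hence $\int S_{ij}A^2_{ij}=-\tfrac12\int\lvert\nabla S\rvert^2$. Also $\int S\Delta S=-\int\lvert\nabla S\rvert^2$. Together these produce the $-\tfrac13\lvert\nabla S\rvert^2$ term. The remaining cubic terms are controlled by $[\operatorname{tr}(A^3)]^2\le\tfrac13S^3$ and $\lvert A^2\rvert^2\le\tfrac7{12}S^2$, which give
\[
2[\operatorname{tr}A^3]^2+\tfrac76S\lvert A^2\rvert^2-\tfrac16S^3\le\tfrac{85}{72}S^3,
\]
since $\tfrac23+\tfrac{49}{72}-\tfrac16=\tfrac{85}{72}$.

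\textbf{Equality.} In every case equality forces, at each point, principal curvatures of the form $(\lambda,\lambda,\lambda,-3\lambda)$, the extremal configuration for both algebraic inequalities. By Theorem \ref{eigentheoremlcf} this gives $\operatorname{W}^+=0$, and then $\operatorname{W}\equiv0$ by Theorem \ref{equalweylnormslcf}. Conversely, if $(M^4,g)$ is locally conformally flat, Proposition \ref{nishi} gives a principal curvature of multiplicity at least three, so with $H=0$ the eigenvalues have exactly this form and equality holds pointwise.
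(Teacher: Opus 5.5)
Your proposal is correct and follows essentially the paper's route. You write the Bach-flat condition extrinsically (yielding exactly \eqref{bachhyper}), contract it with $A_{ij}$ and $A^2_{ij}$, and integrate using Codazzi, Simons' identity and $\int_M A_{ij}A_{ikl}A_{jkl}\,dV_g=\tfrac12\int_M(S-4c)\operatorname{tr}(A^3)\,dV_g$. You then conclude with \eqref{lagrange}, the upper bound in \eqref{ineq} and Proposition~\ref{nishi}, as the paper does. The differences are minor. You start from $B_{ij}=\nabla_kC_{ijk}+P_{kl}W_{ikjl}$ instead of the Ricci-form expression for $2B_{ij}$; your commutation and Weyl--Schouten contraction do reproduce \eqref{bachhyper}, with the $A_{ij}\operatorname{tr}(A^3)$ term coming half from $\nabla_kC_{ijk}$ and half from $P_{kl}W_{ikjl}$. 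Your evaluation $\int_M S_{ij}A^2_{ij}\,dV_g=-\tfrac12\int_M\lvert\nabla S\rvert^2dV_g$ via $\nabla_iA^2_{ij}=\tfrac12\nabla_jS$ is more direct than the paper's second integration by parts.
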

		It is worth noting that, since all locally conformally flat metrics are
		Bach-flat, the aforementioned result by Nishikawa and Maeda (Proposition \ref{nishi}) and Theorem \ref{bachident} immediately
		provide an integral equality for the second fundamental form
		of locally conformally flat, minimal hypersurfaces in space forms:
		\begin{cor} \label{corbachlcf}
			Let $(M^4,g)$ be a closed, minimally immersed hypersurface in a space form 
			$(N^5(c),g_N)$. If $g$ is a locally conformally flat
			metric, then
			\[
			\int_M\lvert\nabla A^2\rvert^2-\dfrac{1}{3}\lvert\nabla S\rvert^2dV_g=
			\dfrac{1}{3}\int_MS^2(S-4c)dV_g.
			\]
		\end{cor}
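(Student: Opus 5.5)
The plan is to read this off from the equality case of \eqref{bachintegral2} in Theorem \ref{bachident}, and then evaluate the right-hand side pointwise using the algebraic structure of $A$ that Proposition \ref{nishi} forces.

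\emph{Step 1 (integral identity).} Every locally conformally flat metric is Bach-flat, so $(M^4,g)$ satisfies the hypotheses of Theorem \ref{bachident}. The equality characterization in that theorem says that equality in \eqref{bachintegral2} holds precisely when $g$ is locally conformally flat. Hence
\[
\int_M\lvert\nabla A^2\rvert^2-\dfrac{1}{3}\lvert\nabla S\rvert^2\,dV_g=\int_M\left[\dfrac{85}{72}S^3-2\operatorname{tr}(A^6)-c\left(4\lvert A^2\rvert^2-S^2\right)\right]dV_g .
\]

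\emph{Step 2 (shape of $A$).} By Proposition \ref{nishi} (Nishikawa--Maeda), at every point $p$ some principal curvature has multiplicity at least $3$. So the principal curvatures at $p$ are $\lambda,\lambda,\lambda,\mu$. Minimality gives $\mu=-3\lambda$. This description also covers umbilic points, where $\lambda=\mu=0$ and everything vanishes.

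\emph{Step 3 (pointwise evaluation).} With these eigenvalues:
\begin{itemize}
\item $S=12\lambda^2$;
\item $\lvert A^2\rvert^2=3\lambda^4+81\lambda^4=84\lambda^4=\tfrac{7}{12}S^2$, which is the equality case of the upper algebraic bound;
\item $\operatorname{tr}(A^6)=3\lambda^6+729\lambda^6=732\lambda^6=\tfrac{61}{144}S^3$.
\end{itemize}
Substituting into the integrand of Step 1, the cubic terms give
\[
\tfrac{85}{72}S^3-\tfrac{122}{144}S^3=\tfrac{48}{144}S^3=\tfrac13 S^3,
\]
and the $c$-term gives
\[
c\left(\tfrac{7}{3}-1\right)S^2=\tfrac43 cS^2 .
\]
So the integrand equals $\tfrac13 S^2(S-4c)$ pointwise, which yields the stated identity.

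The main point requiring care is Step 1: one must check that the equality statement of Theorem \ref{bachident} really gives equality for \emph{every} locally conformally flat minimal hypersurface, not only a characterization among Bach-flat ones. This holds because the statement is an "if and only if". If in doubt, a fallback is to integrate \eqref{secondbochnerbach} over $M$ directly, using $\int_M S_{ij}A^2_{ij}\,dV_g$ and $\int_M S\Delta S\,dV_g=-\int_M\lvert\nabla S\rvert^2\,dV_g$. Under the Nishikawa--Maeda structure, the Hessian term can be handled since $A^2=\lambda^2(\mathrm{Id}+8\,e_4\otimes e_4)$ is expressed through $S$ and $A$. The remaining computations are routine.
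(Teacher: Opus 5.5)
Your proposal is correct and is essentially the paper's own proof. Local conformal flatness gives Bach-flatness, hence equality in \eqref{bachintegral2}; Proposition \ref{nishi} with minimality then gives principal curvatures $\lambda,\lambda,\lambda,-3\lambda$, so that $\lvert A^2\rvert^2=\tfrac{7}{12}S^2$ and $\operatorname{tr}(A^6)=\tfrac{61}{144}S^3$, and your arithmetic reducing the integrand to $\tfrac{1}{3}S^2(S-4c)$ is right (the fallback you mention is not needed, since the equality statement of Theorem \ref{bachident} already applies to every locally conformally flat minimal hypersurface).
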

		Finally, we can also provide a strict integral inequality in the half harmonic Weyl case and a characterization of
		totally geodesic space forms inside $N^5(1)$, under the Bach-flat hypothesis:
		\pagebreak
		\begin{cor} \label{rigresultintegral}
			\begin{enumerate}
				\item Under the hypotheses of Theorem \ref{harmweylinequality}, either 
				$(M^4,g)$ is locally conformally flat or
				\begin{equation} \label{strictharmweyl}
					c\int_M\left(\dfrac{7}{3}S^2-4\lvert A^2\rvert^2\right)dV_g < \int_M\left(\dfrac{427}{72}S^3-14\operatorname{tr}(A^6)\right)dV_g.
				\end{equation}
				\item Under the hypotheses of Theorem \ref{bachident}, if $c=1$ then
				\begin{equation} \label{totgeodbach}
					\int_M\left(\lvert\nabla A^2\rvert^2-\dfrac{1}{3}\lvert\nabla S\rvert^2\right)dV_g\leq
					\int_M\left(
					\dfrac{85}{72}S^3-2\operatorname{tr}(A^6)\right)dV_g;
				\end{equation}
				moreover, equality holds if and only if 
				$(M^4,g)$ is totally geodesic.
			\end{enumerate}
		\end{cor}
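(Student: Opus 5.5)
The plan is to derive both parts directly from Theorem \ref{harmweylinequality} and Theorem \ref{bachident}, bounding the remaining terms with the sharp algebraic inequalities for the trace-free matrix $A$ ($H=0$) in dimension four. We use $\frac14 S^2\le\lvert A^2\rvert^2\le\frac{7}{12}S^2$ from Section \ref{prelim}, together with Okumura's inequality for trace-free symmetric $4\times4$ matrices,
\[
[\operatorname{tr}(A^3)]^2\le \frac{(n-2)^2}{n(n-1)}S^3=\frac13 S^3 .
\]
Equality in Okumura's inequality holds iff three principal curvatures coincide.

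For part (1), assume $(M^4,g)$ is not locally conformally flat, so \eqref{sharpinequality} holds. Insert the two upper bounds into its right-hand side:
\[
5[\operatorname{tr}(A^3)]^2+\frac{55}{6}S\lvert A^2\rvert^2-\frac{13}{12}S^3\le\Big(\frac{5}{3}+\frac{385}{72}-\frac{13}{12}\Big)S^3=\frac{427}{72}S^3 .
\]
The $-14\operatorname{tr}(A^6)$ term is left untouched. This gives \eqref{strictharmweyl} with "$\le$". For strictness, suppose equality held in \eqref{strictharmweyl}. Then equality must hold both in \eqref{sharpinequality} and pointwise in Okumura's inequality. By Theorem \ref{harmweylinequality}, the first forces $M$ to be locally the Clifford $\mathbb{S}^2(1/\sqrt2)\times\mathbb{S}^2(1/\sqrt2)$. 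There $\operatorname{tr}(A^3)=0$ while $S=4$, so Okumura's inequality is strict, a contradiction. Alternatively, pointwise equality in Okumura's inequality alone forces three equal principal curvatures, so $\operatorname{W}=0$ by Theorem \ref{eigentheoremlcf}, contradicting the assumption.

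For part (2), set $c=1$ in \eqref{bachintegral2}. Since $4\lvert A^2\rvert^2-S^2\ge0$ pointwise, dropping the term $-(4\lvert A^2\rvert^2-S^2)$ only enlarges the right-hand side, which yields \eqref{totgeodbach}. If $M$ is totally geodesic, both sides vanish, so equality holds.

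Conversely, suppose equality holds in \eqref{totgeodbach}. Then equality holds in \eqref{bachintegral2} and $\int_M(4\lvert A^2\rvert^2-S^2)\,dV_g=0$, so $4\lvert A^2\rvert^2=S^2$ everywhere. By Theorem \ref{bachident}, the first equality makes $M$ locally conformally flat. The second means $\sum\lambda_i^4=\frac14(\sum\lambda_i^2)^2$, which is Cauchy–Schwarz equality, so all $\lambda_i^2$ are equal. Combined with $\sum\lambda_i=0$, the principal curvatures at each point are $(a,a,-a,-a)$. Local conformal flatness and Proposition \ref{nishi} require a principal curvature of multiplicity at least $3$. This forces $a=0$, so $M$ is totally geodesic.

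The main obstacle is the equality analysis, in particular checking that the pointwise equality cases of the algebraic inequalities are incompatible with the rigidity cases of the cited theorems. The inequalities themselves are just coefficient bookkeeping, for example $120+385-78=427$ over the common denominator $72$.
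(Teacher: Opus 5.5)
Your proposal is correct and is essentially the paper's proof. In (1) you bound the right-hand side of \eqref{sharpinequality} using \eqref{lagrange} together with the upper bound in \eqref{ineq}, which the paper uses tacitly to reach $\frac{427}{72}$; strictness follows because pointwise equality in \eqref{lagrange} forces three equal principal curvatures and hence $\operatorname{W}\equiv 0$. In (2) you discard the non-negative term $4\lvert A^2\rvert^2-S^2$ from \eqref{bachintegral2}, and equality then forces $(M^4,g)$ to be Einstein and locally conformally flat, hence totally geodesic. Your alternative strictness argument through the equality case of Theorem \ref{harmweylinequality} (where $\operatorname{tr}(A^3)=0<\sqrt{S^3/3}$) is also valid, though not needed.
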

		
		\subsection{Acknowledgements}
		This work was initiated during the International Doctoral Summer School In Conformal Geometry and Non-local Operators at the Instituto de Matem\'aticas de la Universidad de Granada (IMAG). We thank the organizers, Azahara DelaTorre Pedraza and Mar\'ia del Mar Gonz\'alez, for their kind invitation. We would also like to thank Jeffrey S. Case, Giovanni Catino, Matthew J. Gursky, Paolo Mastrolia and Andrea Seppi for many helpful conversations and suggestions. The first author is a member of the Gruppo Nazionale per le Strutture Algebriche, Geometriche e loro Applicazioni (GNSAGA) of INdAM (Istituto Nazionale di Alta Matematica).
		\section{Four-Dimensional hypersurfaces in space forms and the Weyl tensor} \label{prelim}
		
		Let $N^{n+1}$ be a smooth, connected, orientable manifold of dimension $n+1$ and suppose there exists a Riemannian metric $g_N$ such that 
		$(N^{n+1}(c),g_N)$ is a space form with constant sectional curvature $c$: from now on, we will assume that $g_N$ is normalized in such a way that $c\in\{-1,0,1\}$. 
		Let $M^n$ be a smooth, connected,  closed manifold of dimension $n$ and 
		$\iota:M^n\longrightarrow (N^{n+1}(c),g_N)$ be an immersion. 
		We can define a Riemannian metric $g$ induced by $g_N$
		as $g=\iota^{\ast}(g_N)$: in this case, we say that the map
		\[
		\iota:(M^n,g)\longrightarrow (N^{n+1}(c),g_N)
		\]
		is an \emph{isometric immersion} of $M^n$ into $N^{n+1}$. Throughout the paper, we sometimes omit the dimensional notation on $M$ and $N$, when there is no ambiguity:
		moreover, we will use Einstein's summation convention over repeated indices, unless specified otherwise, and we will assume that $M^n$ is oriented with the orientation induced by
		the one on $N^{n+1}$ (however, we point out that some of the local results hold even in the non-orientable case).
		
		We want to compute the curvature of the induced metric $g$ on $M$:
		we can choose a local Darboux frame along $\iota$, i.e. a local orthonormal frame $\{E_a\}_{a=1}^{n+1}$ on $N$ 
		such that $\{E_i\}_{i=1}^n$ locally span the image of the tangent
		bundle $TM$ \emph{via} the pushforward $\iota_{\ast}$ and $E_5$ locally
		spans the normal bundle $TM^{\perp}$. This allows
		us to construct a local orthonormal frame $\{e_i\}_{i=1}^n$ on $M$ 
		such that $\iota_{\ast}(e_i)=E_i$
		(for a detailed description
		see, for instance, \cite{aliasmastroliarigoli}).
		
		We consider the coframe $\{\theta^a\}_{a=1}^{n+1}$ dual to $\{E_a\}$, which,
		by our choices, is such that
		\[
		\iota^{\ast}(\theta^{n+1})=0.
		\]
		Exploiting Cartan's structure equations for $\iota^{\ast}(\theta^i)$ (see e.g. \cite{CatinoMastroliaBook}),
		we get the local expression of the Riemann tensor $\operatorname{Riem}$ of $g$ (Gauss--Codazzi equations):
		\begin{equation} \label{riem}
			R_{ijkl}={}^NR_{ijkl}+A_{ik}A_{jl}-A_{il}A_{jk},
		\end{equation}
		where ${}^NR_{ijkl}$ denotes the components of the Riemann tensor ${}^N\operatorname{Riem}$ of
		$(N,g_N)$ and $A_{ij}$ are the components of the second fundamental
		form $A$ with respect to the chosen Darboux frame. In accordance with much of the literature, throughout the paper we will denote the
		squared norm of $A$ as
		\[
		\lvert A\rvert^2=A_{ij}A_{ij}=:S.
		\]
		Since $g_N$ is a constant sectional curvature metric, we can
		rewrite \eqref{riem} as
		\begin{equation} \label{riemconst}
			R_{ijkl}=c(\delta_{ik}\delta_{jl}-\delta_{il}\delta_{jk})
			+A_{ik}A_{jl}-A_{il}A_{jk}
		\end{equation}
		(recall that, since the components are computed with respect to
		a local orthonormal coframe, $g_{ij}=\delta_{ij}$).
		By \eqref{riemconst}, we also obtain the expressions 
		for the Ricci tensor $\operatorname{Ric}$ and the scalar
		curvature $R$
		\begin{align}
			R_{ij}&=(n-1)c\delta_{ij}+HA_{ij}-A_{it}A_{tj} \label{ricconst}\\
			R&=n(n-1)c+H^2-S \label{scalconst},
		\end{align}
		where $H=\sum_{i=1}^{n}A_{ii}$ is the mean curvature
		of the immersion: we point out that, in much of the literature, the mean curvature is defined as $(1/n)H$. 
		From now on, we will write
		$A^m=\underbrace{A\circ...\circ A}_\text {$m$ times}$ and 
		\[
		A_{ij}^m=A_{ik_1}A_{k_1k_2}\cdots
		A_{k_{m-1}k_m}A_{k_mj}.
		\]
		Since $A$ is a symmetric $(0,2)$-tensor,
		at every $p\in M$ there exists a 
		local orthonormal frame such that
		$A$ is in diagonal form, i.e. 
		\begin{equation} \label{diagonalsecond}
			A_{ij}=\lambda_i\delta_{ij}
		\end{equation}
		(here we are not summing over $i$): 
		we say that the eigenvalues $\lambda_i$
		of $A$ are the \emph{principal curvatures} of the isometric immersion and
		a hypersurface with constant principal 
		curvatures is called \emph{isoparametric}.
		We also recall that, if $M$ is \emph{minimal} (i.e. $H\equiv 0$ on $M$), by well-known commutation formulas for the second derivatives of $(0,2)$-tensors (\cite{CatinoMastroliaBook}), it is possible to show that 
		\begin{equation} \label{simonsidlocal}
			\Delta A_{ij}=(nc-S)A_{ij},
		\end{equation}
		which, by contraction with $A_{ij}$, implies the celebrated \emph{Simons' identity} \cite{simons}
		\begin{equation} \label{simonsid}
			\dfrac{1}{2}\Delta S=\lvert\nabla A\rvert^2+S(nc-S).
		\end{equation}
		If $M$ is closed, integrating \eqref{simonsid} with $c=1$ we obtain 
		\begin{equation} \label{simonsidint}
			\int_M\lvert\nabla A\rvert^2dV_g=\int_MS(S-n)dV_g,
		\end{equation}
		which implies that, if $S\leq n$, then either $(M,g)$ is totally geodesic (i.e. $A\equiv 0$ on $M$) or $S\equiv n$ on $M$: the latter case was completely characterized by Chern, do Carmo, Kobayashi and Lawson \cite{cherndocarmokob,lawson}. It is
		worth mentioning that Simons-type equalities were also found, for instance, by Nomizu and Smyth 
		\cite{nomizusmyth} and Wang \cite{wang2018simons},
		in order to provide characterizations of special hypersurfaces (e.g. Clifford minimal hypersurfaces or catenoids). 
		
		Now, we know that, on every Riemannian manifold of dimension $n\geq 3$,
		the following decomposition holds
		\begin{equation} \label{riemdeco}
			R_{ijkl}=W_{ijkl}+\dfrac{1}{n-2}
			(R_{ik}\delta_{jl}-R_{il}\delta_{jk}+
			R_{jl}\delta_{ik}-R_{jk}\delta_{il})-
			\dfrac{R}{(n-1)(n-2)}(\delta_{ik}\delta_{jl}-\delta_{il}\delta_{jk}),
		\end{equation}
		where $W_{ijkl}$ are the components of the \emph{Weyl tensor}
		$\operatorname{W}$. We recall that the Weyl tensor enjoys the same symmetries as $\operatorname{Riem}$ and it is totally trace-free: furthermore, $\operatorname{W}$ is the \emph{conformally invariant} part of the curvature, since, under a conformal transformation $\widetilde{g}=e^{2u}g$, where $u\in C^{\infty}(M)$, its $(1,3)$ version satisfies 
		\[
		\widetilde{\operatorname{W}}=\operatorname{W}.
		\]
		
		From now on, let $n=4$: by \eqref{riemconst}, \eqref{ricconst},
		\eqref{scalconst} and \eqref{riemdeco}, we can compute
		$W_{ijkl}$ as
		\begin{align} \label{weylconst}
			W_{ijkl}&=A_{ik}A_{jl}-A_{il}A_{jk}
			-\frac{H}{2}(A_{ik}\delta_{jl}-A_{il}\delta_{jk}
			+A_{jl}\delta_{ik}-A_{jk}\delta_{il})+\\
			&+\dfrac{1}{2}(A^2_{ik}\delta_{jl}-A^2_{il}\delta_{jk}
			+A^2_{jl}\delta_{ik}-A^2_{jk}\delta_{il})
			+\dfrac{1}{6}(H^2-S)
			(\delta_{ik}\delta_{jl}-\delta_{il}\delta_{jk}) \notag. 
		\end{align}
		We point out that the expression in \eqref{weylconst} does not
		depend on the sectional curvature $c$ of $N$, reflecting the conformal invariance of $\operatorname{W}$. 
		
		Now, since $M$ is oriented, if $\eta$ is a volume form on $N$ locally 
		defined as
		\[
		\eta=\theta^1\wedge...\wedge\theta^5,
		\]
		we can locally define a volume form $\mu\in\Lambda^4M$ as
		\[
		\mu=\iota^{\ast}(\theta^1)\wedge...\wedge\, \iota^{\ast}(\theta^4).
		\]
		For the sake of simplicity, we will omit the pullback notation from
		now on. Since we chose a local Darboux frame, which, in particular,
		is orthonormal, we can rewrite the local expression of $\mu$ as
		\begin{equation} \label{volform}
			\mu=\dfrac{1}{24}\mu_{ijkl}\theta^i\wedge\theta^j\wedge\theta^k
			\wedge\theta^l,
		\end{equation}
		where $\mu_{ijkl}$ is the \emph{Levi-Civita symbol}, i.e.
		\begin{equation} \label{levicivitasymbol}
			\mu_{ijkl}=
			\begin{cases}
				\operatorname{sgn}(\sigma), &\mbox{ if } 
				\sigma \mbox{ is a permutation of } (1234)\\
				0 &\mbox{ otherwise}
			\end{cases}.
		\end{equation}
		Recall that, in general, \eqref{levicivitasymbol} 
		is skew-symmetric in all the indices and it satisfies
		the following important property
		\begin{equation} \label{symbolcontraction}
			\mu_{i_1...i_k i_{k+1}...i_4}\mu_{i_1...i_kj_{k+1}...j_4}=
			k!\delta_{i_{k+1}...i_4}^{j_{k+1}...j_4},
		\end{equation}
		where $\delta_{i_1,...,i_p}^{j_1,...j_p}$ is the
		\emph{generalized Kronecker delta}
		defined as
		\[
		\delta_{i_1\cdots i_p}^{j_1\cdots j_p}=
		\begin{cases}
			\operatorname{sgn}(\sigma), 
			\quad &\mbox{ if } i_1,...,i_p \mbox{ are all different and } (j_1,...,j_p)=\sigma(i_1,...,i_p), \mbox{ for } \sigma\in S_p\\
			$0$ \quad &\mbox{ otherwise }
		\end{cases}
		\]
		(here $S_p$ denotes the $p$-th symmetric group).
		We define the 
		\emph{Hodge star operator} $\star$ as the map
		\begin{equation} \label{hodge}
			\star:\Lambda^k\longrightarrow\Lambda^{4-k},
		\end{equation}
		such that, for every $\omega\in\Lambda^k$, $\star(\omega)$ is the unique $(4-k)$-form such that
		$\eta\wedge\star(\omega)=\langle\eta,\omega\rangle\mu$, for every $\eta\in\Lambda^k$. 
		
		From now on, we consider $\star$ applied to $\omega\in\Lambda^2M$: 
		in this case, $\star$ is an automorphism of $\Lambda^2M$ and,
		since locally
		$\omega=\omega_{ij}\theta^i\wedge\theta^j$ for some smooth
		functions $\omega_{ij}$, we can locally express \eqref{hodge}
		using \eqref{volform} and \eqref{levicivitasymbol} as
		\begin{equation} \label{hodge2local}
			\star(\omega)=\dfrac{1}{4}\mu_{ijkl}\omega_{kl}\theta^i\wedge\theta^j.
		\end{equation}
		Since $\operatorname{W}$ satisfies the same symmetries as the
		Riemann tensor $\operatorname{Riem}$, 
		we can define the \emph{Weyl operator} as
		\begin{align*}
			\mathcal{W}: \Lambda^2&\longrightarrow\Lambda^2\\
			\omega &\longmapsto \mathcal{W}\omega,
		\end{align*}
		where $\mathcal{W}\omega$ is the 2-form whose
		local expression is
		\[
		\mathcal{W}\omega=\dfrac{1}{4}W_{ijkl}\omega_{kl}
		\theta^i\wedge\theta^j
		\]
		We can 
		consider the compositions $\star\mathcal{W}$ and 
		$\mathcal{W}\star$; however, it is easy to see that
		\begin{equation} \label{weylhodge}
			\star\mathcal{W}=\mathcal{W}\star.
		\end{equation}
		Hence, we can define a new operator
		\begin{align*}
			\star\mathcal{W}: \Lambda^2&\longrightarrow\Lambda^2\\
			\omega &\longmapsto \star\mathcal{W}(\omega)=
			\mathcal{W}(\star\omega):
		\end{align*}
		the $2$-form $\star\mathcal{W}(\omega)$ has
		the local expression
		\[
		\star\mathcal{W}(\omega)=\dfrac{1}{4}(\star W)_{ijkl}
		\omega_{kl}\theta^i\wedge \theta^j,
		\]
		where
		\begin{equation} \label{locstarweyl}
			(\star W)_{ijkl}=\dfrac{1}{2}\mu_{ijrs}W_{klrs}.
		\end{equation}
		Observe that we can identify the operator $\star\mathcal{W}$ with a $(0,4)$-tensor $\star\operatorname{W}$, whose local components are defined in \eqref{locstarweyl}.
		
		Since $\{\theta^i\}$ is a local orthonormal coframe, using \eqref{locstarweyl}
		we can write the following useful identities 
		for the local components $(\star W)_{ijkl}$:
		\begin{align} \label{starweyl}
			(\star W)_{1212}&=(\star W)_{3434}=W_{1234};\\
			(\star W)_{1213}&=(\star W)_{3442}=W_{1242}=W_{1334};\notag\\
			(\star W)_{1214}&=(\star W)_{2334}=W_{1223}=W_{1434};\notag\\
			(\star W)_{1313}&=(\star W)_{4242}=W_{1342};\notag\\
			(\star W)_{1314}&=(\star W)_{2342}=W_{1323}=W_{1442};\notag\\
			(\star W)_{1414}&=(\star W)_{2323}=W_{1423}. \notag
		\end{align}
		For the other components, recall that, in this case, 
		the Hodge operator is an involution, i.e. 
		$\star^2=\operatorname{Id}_{\Lambda^2}$: therefore, it is
		easy to see that, if $(\star W)_{ijkl}=W_{i'j'k'l'}$, then
		$(\star W)_{i'j'k'l'}=W_{ijkl}$.  
		
		It is well known that, given a Riemannian manifold 
		$(M,g)$ of dimension $4$, the Hodge star operator 
		induces a splitting of the bundle $\Lambda^2$ into two
		subbundles $\Lambda_+$ and $\Lambda_-$, called the 
		bundles of the \emph{self-dual} and the \emph{anti-self-dual} $2$-forms, 
		respectively.
		More precisely, $\Lambda_{\pm}$ is the eigenspace of
		$\star$ corresponding to the eigenvalue $\pm 1$: 
		therefore, a $2$-form $\omega$ is self-dual if
		$\star\omega=\omega$ and it is anti-self-dual if
		$\star\omega=-\omega$. Note that every $2$-form
		$\eta$ can be decomposed into a self-dual and an
		anti-self-dual part as
		\begin{equation} \label{formdeco}
			\eta=\dfrac{1}{2}(\eta+\star\eta)+\dfrac{1}{2}(\eta-
			\star\eta).
		\end{equation}
		As a consequence, the Weyl tensor can be decomposed as
		\begin{equation} \label{weyldecomp}
			\operatorname{W}=\operatorname{W}^++\operatorname{W}^-;
		\end{equation}
		we say that $\operatorname{W}^+$ (resp., $\operatorname{W}^-$) 
		is the \emph{self-dual}
		(resp., \emph{anti-self-dual}) part of the Weyl tensor.
		We say that $(M,g)$ is \emph{self-dual} (resp., \emph{anti-self-dual})
		if $\operatorname{W}^-$ (resp., $\operatorname{W}^+$) identically
		vanishes on $M$; in general, if one of the two addenda in 
		\eqref{weyldecomp} vanishes, we say that $(M,g)$ is 
		\emph{half conformally flat}.
		
		We can describe the decomposition \eqref{weyldecomp} more clearly. If we apply the operator $\star \mathcal{W}$ to
		a self-dual form $\omega$, by \eqref{weylhodge} we obtain that 
		\[
		\star\mathcal{W}(\omega)=\mathcal{W}(\star\omega)=
		\mathcal{W}(\omega),
		\]
		which means that $\star\mathcal{W}(\omega)$ is 
		a self-dual form as well. 
		Analogously, if $\omega$ is an anti-self-dual form,
		$\star\mathcal{W}(\omega)=-\mathcal{W}(\omega)$:
		in particular, this means that
		\[
		\mathcal{W}(\Lambda_{\pm})\subset\Lambda_{\pm}.
		\]
		This allows us to define the self-dual and the
		anti-self-dual Weyl operators as
		\begin{equation} \label{weylplusminus}
			\mathcal{W}^{\pm}=\dfrac{1}{2}(\mathcal{W}\pm\star \mathcal{W}),
		\end{equation} 
		which can be locally expressed as 
		$\mathcal{W}^{\pm}\omega=\frac{1}{4}W_{ijkl}^{\pm}\omega_{kl}\theta^i\wedge\theta^j$, where
		\begin{equation} \label{weylplusminuscomp}
			W_{ijkl}^{\pm}:=\dfrac{1}{2}[W_{ijkl}\pm(\star W)_{ijkl}]=
			\dfrac{1}{2}\left[W_{ijkl}\pm\dfrac{1}{2}\mu_{ijpq}W_{pqkl}\right];
		\end{equation}
		note that, by definition,
		\[
		\mathcal{W}^{\pm}(\Lambda_{\pm})\subset \Lambda_{\pm}, \quad \mathcal{W}^{\pm}(\Lambda_{\mp})=0.
		\]
		By \eqref{weylplusminus} and the fact that 
		$\star^2=\operatorname{Id}_{\Lambda^2}$, it follows that
		\begin{equation} \label{hodgeweylplusmin}
			\star(\mathcal{W}^{\pm})=\dfrac{1}{2}[
			\star(\mathcal{W}\pm\star\mathcal{W})]=
			\dfrac{1}{2}[\star\mathcal{W}\pm\star(\star\mathcal{W})]=
			\pm\mathcal{W}^{\pm},
		\end{equation}
		which can be written locally as
		\begin{equation} \label{hodgeweylplusmincomp}
			W_{ijkl}^{\pm}=\pm\dfrac{1}{2}\mu_{ijpq}W_{pqkl}^{\pm}.
		\end{equation}
		It is therefore natural to define two tensors 
		$\operatorname{W}^+$ and $\operatorname{W}^-$ as
		\[
		\operatorname{W}^{\pm}=
		\dfrac{1}{2}(\operatorname{W}\pm 
		(\star \operatorname{W}))
		\]
		which is the tensorial equivalent of 
		\eqref{weylplusminus}: indeed, the local components of $\operatorname{W}^{\pm}$ are exactly the ones defined in \eqref{weylplusminuscomp} and this also immediately shows that $\operatorname{W}$ decomposes as 
		in \eqref{weyldecomp}. 
		
		By \eqref{hodgeweylplusmincomp}, it is straightforward to prove the 
		orthogonality of the tensors $\operatorname{W}^{+}$ and 
		$\operatorname{W}^{-}$, i.e.
		\begin{equation} \label{weylplusminorthog}
			W_{ijkl}^{+}W_{ijkl}^{-}=0.
		\end{equation}
		An immediate consequence of \eqref{weylplusminorthog} is the
		validity of the following identities:
		\begin{align}
			\lvert\operatorname{W}\rvert^2&=\lvert\operatorname{W}^+\rvert^2+
			\lvert\operatorname{W}^-\rvert^2 \label{squarenormweyl}\\
			\lvert\operatorname{W}^{\pm}\rvert^2&=
			\dfrac{1}{2}\left[\lvert\operatorname{W}\rvert^2\pm
			\operatorname{tr}(\operatorname{W}\circ\operatorname{(\star W)})\right],
		\end{align}
		where $\operatorname{tr}(\operatorname{W}\circ\operatorname{(\star W)})=W_{ijkl}(\star W)_{ijkl}$.
		
		By \eqref{weylconst}, 
		\eqref{volform} and \eqref{levicivitasymbol}, we can 
		compute \eqref{weylplusminuscomp} for the hypersurface $(M,g)$:
		first, we compute \eqref{locstarweyl} by \eqref{weylconst} exploiting
		the skew-symmetry of \eqref{levicivitasymbol} as
		\begin{align} \label{hodgeweylhyper}
			(\star W)_{ijkl}&=\dfrac{1}{2}\mu_{ijrs}W_{rskl}=\\
			&=\mu_{ijrs}A_{rk}A_{sl}-\frac{H}{2}(\mu_{ijrl}A_{rk}-
			\mu_{ijrk}A_{rl})+\dfrac{1}{2}(\mu_{ijrl}A_{rk}^2
			-\mu_{ijrk}A_{rl}^2)+\dfrac{1}{6}(H^2-S)
			\mu_{ijkl}. \notag
		\end{align}
		By \eqref{hodgeweylhyper} we obtain the expression of 
		\eqref{weylplusminuscomp}:
		\small
		\begin{align} \label{weylplusminushyper}
			W_{ijkl}^{\pm}&=\dfrac{1}{2}[W_{ijkl}\pm(\star W)_{ijkl}]=\\
			&=\dfrac{1}{2}(A_{ik}A_{jl}-A_{il}A_{jk}\pm
			\mu_{ijrs}A_{rk}A_{sl})-
			\frac{H}{4}(A_{ik}\delta_{jl}-A_{il}\delta_{jk}+A_{jl}\delta_{ik}-
			A_{jk}\delta_{il}\pm\mu_{ijrl}A_{rk}\mp\mu_{ijrk}A_{rl})+\notag\\
			&+\dfrac{1}{4}(A^2_{ik}\delta_{jl}-A_{il}^2\delta_{jk}+
			A_{jl}^2\delta_{ik}-A_{jk}^2\delta_{il}	\pm\mu_{ijrl}A_{rk}^2\mp\mu_{ijrk}A_{rl}^2)+\notag\\
			&+\dfrac{1}{12}(H^2-S)(\delta_{ik}\delta_{jl}-
			\delta_{il}\delta_{jk}\pm\mu_{ijkl}). \notag
		\end{align}
		\normalsize
		Now, we want to compute $\lvert\operatorname{W}^{\pm}\rvert^2$: first,
		we recall that, given a $(0,2)$ symmetric tensor $T$, with components
		$T_{ij}$, 
		\begin{equation} \label{volformcontractsymm}
			\mu_{ijkl}T_{ij}=0 \quad \mbox{and} \quad \mu_{ijkl}T_{it}T_{tj}=0;
		\end{equation}
		indeed, the first equation in \eqref{volformcontractsymm} is
		trivial, while the second can be easily derived as follows
		\[
		\mu_{ijkl}T_{it}T_{tj}=\mu_{jikl}T_{jt}T_{ti}=
		\mu_{jikl}T_{ti}T_{jt}=\mu_{jikl}T_{it}T_{tj}=
		-\mu_{ijkl}T_{it}T_{tj}.
		\]
		By \eqref{symbolcontraction}, \eqref{weylplusminushyper}, 
		and \eqref{volformcontractsymm}, a long, yet straightforward, computation
		leads to 
		\begin{equation} \label{weylplusminusnormhyper}
			\lvert\operatorname{W}^{\pm}\rvert^2=
			\dfrac{7}{6}S^2+\dfrac{1}{6}H^4
			-2\lvert A^2\rvert^2+2H\operatorname{tr}A^3
			-\dfrac{4}{3}H^2S,
		\end{equation}
		where 
		\begin{equation} \label{htraces}
			\lvert A^2\rvert^2=
			A_{ij}^2A_{ij}^2, \quad \mbox{and} \quad
			\operatorname{tr}A^3=A^3_{ii}.
		\end{equation}
		By \eqref{squarenormweyl}, we finally obtain
		\begin{equation} \label{weylnormhyper}
			\lvert\operatorname{W}\rvert^2=
			\dfrac{7}{3}S^2+\dfrac{1}{3}H^4
			-4\lvert A^2\rvert^2+4H\operatorname{tr}A^3
			-\dfrac{8}{3}H^2S;
		\end{equation}
		note that, by \eqref{weylplusminusnormhyper} and \eqref{weylnormhyper},
		we have
		\begin{theorem} \label{equalweylnorms}
			Let $(M^4,g)$ be an oriented, isometrically immersed hypersurface in a space form $(N^5(c),g_N)$ with constant sectional curvature $c$. Then, at every $p\in M$,        \begin{equation}
				\label{weylequal}
				\lvert\operatorname{W}^+\rvert^2=
				\lvert\operatorname{W}^-\rvert^2=
				\dfrac{1}{2}\lvert\operatorname{W}\rvert^2.
			\end{equation}
		\end{theorem}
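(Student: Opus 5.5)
The plan is to show that the cross term $\operatorname{tr}(\operatorname{W}\circ(\star\operatorname{W}))=W_{ijkl}(\star W)_{ijkl}$ vanishes pointwise. The displayed identity $\lvert\operatorname{W}^{\pm}\rvert^2=\frac12[\lvert\operatorname{W}\rvert^2\pm\operatorname{tr}(\operatorname{W}\circ(\star\operatorname{W}))]$ then gives \eqref{weylequal} at once. Equivalently, \eqref{weylplusminusnormhyper} already yields the same expression for both signs, and comparing it with \eqref{weylnormhyper} gives the claim. The substance is therefore to justify that the $\pm$ terms cancel, i.e. that $W_{ijkl}(\star W)_{ijkl}=0$.

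I would work at a fixed point $p$ in a frame diagonalizing $A$, so $A_{ij}=\lambda_i\delta_{ij}$ by \eqref{diagonalsecond}. From \eqref{weylconst}, $W_{ijkl}$ then vanishes unless $\{i,j\}=\{k,l\}$ as sets. Only the components $W_{ijij}$ with $i\neq j$ survive, given by
\[
\lambda_i\lambda_j-\tfrac H2(\lambda_i+\lambda_j)+\tfrac12(\lambda_i^2+\lambda_j^2)+\tfrac16(H^2-S).
\]

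By \eqref{locstarweyl}, $(\star W)_{ijkl}=\frac12\mu_{ijrs}W_{klrs}$, which is nonzero only when $\{r,s\}=\{k,l\}$ and $\{i,j,r,s\}=\{1,2,3,4\}$. Hence $(\star W)_{ijkl}\neq0$ forces $\{i,j\}$ and $\{k,l\}$ to be complementary pairs; this is visible in \eqref{starweyl}, e.g. $(\star W)_{1234}=W_{1212}$ while $(\star W)_{1212}=W_{1234}=0$. The supports of $W$ and $\star W$ are thus disjoint, since one needs $\{i,j\}=\{k,l\}$ and the other needs them disjoint, and so the contraction $W_{ijkl}(\star W)_{ijkl}$ vanishes.

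The contraction is a frame-independent scalar, so its vanishing in the diagonalizing frame gives $\lvert\operatorname{W}^+\rvert^2=\lvert\operatorname{W}^-\rvert^2$ at every $p$. Combining with \eqref{squarenormweyl} gives the factor $\frac12$. The only delicate point is the support bookkeeping, namely checking through \eqref{starweyl} that every nonzero $\star W$ component pairs with a vanishing $W$ component, and it is routine. As a consistency check, the result matches \eqref{weylplusminusnormhyper}, where no sign-dependent term appears.
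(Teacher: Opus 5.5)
Your argument is correct, but it is not the route the paper takes.

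\textbf{The paper's route.} The paper proves the theorem by direct computation. It writes out $W^{\pm}_{ijkl}$ in a general orthonormal frame via \eqref{weylplusminushyper}. It then kills the cross terms between the $\operatorname{W}$-part and the $\mu$-part using the identities \eqref{volformcontractsymm} together with \eqref{symbolcontraction}. This ``long, yet straightforward'' contraction produces the explicit formula \eqref{weylplusminusnormhyper}, in which no sign-dependent term survives. Comparing with \eqref{weylnormhyper} then gives the claim.

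\textbf{Your route.} You work in a principal frame and observe that $\operatorname{W}$ is supported on index patterns with $\{i,j\}=\{k,l\}$, while $\star\operatorname{W}$ is supported on complementary pairs. Hence $\langle\operatorname{W},\star\operatorname{W}\rangle=0$ with essentially no computation. The only bookkeeping point is orientation: if your principal frame is negatively oriented, the contraction computed with $\mu_{ijkl}$ just changes sign. That is harmless, since it vanishes, and you can also flip $e_1$.

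\textbf{Comparison.} Your route isolates the structural reason for the result: the Weyl operator of the hypersurface is diagonal on the basis $\{e_i\wedge e_j\}$ of a principal frame. The paper uses this same fact only later, after \eqref{weyleigen}, to note that $\mathcal{W}^+$ and $\mathcal{W}^-$ have the same eigenvalues. Your observation gives that stronger spectral statement directly. It also transfers verbatim to any setting where $\operatorname{W}$ is built algebraically from $g$ and a single symmetric $2$-tensor, for example the locally conformally flat ambient of Meta-Theorem~\ref{metathm}.

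What the paper's computation buys in exchange is the explicit extrinsic expression for $\lvert\operatorname{W}^{\pm}\rvert^2$ and $\lvert\operatorname{W}\rvert^2$. That expression is used throughout the rest of the paper: in the extrinsic Chern--Gauss--Bonnet formula, the pinching \eqref{ineq}, and the topological bounds. Your proof establishes the equality of norms but does not yield that formula.
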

		This immediately leads to the following 
		\begin{cor} \label{halfconfmin}
			Let $(M^4,g)$ be an oriented, isometrically immersed hypersurface in a space form $(N^5(c),g_N)$ with constant sectional curvature $c$. Then $(M^4,g)$ is
			half conformally flat if and only if it is locally conformally flat.
		\end{cor}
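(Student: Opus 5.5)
The corollary should follow at once from Theorem \ref{equalweylnorms} and the orthogonal decomposition \eqref{weyldecomp}–\eqref{squarenormweyl}; no new curvature computation is needed. The plan is to prove the two implications separately.

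\emph{Locally conformally flat implies half conformally flat.} This direction is trivial. Since $n=4\geq 4$, local conformal flatness of $(M^4,g)$ is equivalent to $\operatorname{W}\equiv 0$. Then $\operatorname{W}^{\pm}=\frac{1}{2}(\operatorname{W}\pm\star\operatorname{W})\equiv 0$, because $\star\operatorname{W}$ is defined linearly from $\operatorname{W}$ through \eqref{locstarweyl}. Hence both $\operatorname{W}^+$ and $\operatorname{W}^-$ vanish, and in particular $(M^4,g)$ is half conformally flat.

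\emph{Half conformally flat implies locally conformally flat.} Assume, say, $\operatorname{W}^-\equiv 0$ on $M$; the case $\operatorname{W}^+\equiv 0$ is identical. By \eqref{weylequal}, at every $p\in M$ we have
\[
\lvert\operatorname{W}\rvert^2=2\lvert\operatorname{W}^-\rvert^2=0.
\]
Alternatively, one can argue via \eqref{squarenormweyl}: $\lvert\operatorname{W}^+\rvert^2=\lvert\operatorname{W}^-\rvert^2=0$, so $\lvert\operatorname{W}\rvert^2=0$. Since $\lvert\cdot\rvert^2$ is the norm induced by $g$ and $g$ is positive definite, this gives $\operatorname{W}\equiv 0$ pointwise. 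By the Weyl–Schouten theorem in dimension four, $(M^4,g)$ is then locally conformally flat.

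The only point needing a remark is the role of orientation. Half conformal flatness is defined with respect to the orientation of $M$ induced by $N$. Reversing the orientation swaps $\operatorname{W}^+$ and $\operatorname{W}^-$, and \eqref{weylequal} is symmetric in the two, so the statement is independent of this choice.

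I do not expect any step to be a genuine obstacle. All the work sits in the long computation \eqref{weylplusminusnormhyper} behind Theorem \ref{equalweylnorms}, which we are allowed to assume.
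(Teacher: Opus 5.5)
Your proposal is correct and matches the paper's route. The paper states the corollary as an immediate consequence of \eqref{weylequal}: if either $\operatorname{W}^+$ or $\operatorname{W}^-$ vanishes, then $\lvert\operatorname{W}\rvert^2 = 2\lvert\operatorname{W}^\pm\rvert^2 = 0$, so $\operatorname{W}\equiv 0$; the converse is trivial because $\operatorname{W}^\pm = \frac{1}{2}(\operatorname{W}\pm\star\operatorname{W})$.
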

		Now, we recall a fundamental topological identity.
		The splitting $\Lambda^2=\Lambda_+\oplus\Lambda_-$ induces
		a decomposition of the space of harmonic $2$-forms
		$H^2(M,\mathbb{R})$, which
		is isomorphic to the second de Rham cohomology group, \emph{via}
		the Hodge Theorem:
		\begin{equation} \label{cohomdeco}
			H^2(M,\mathbb{R})=H_+^2(M,\mathbb{R})\oplus H_-^2(M,\mathbb{R}),
		\end{equation}
		where $H_+^2(M,\mathbb{R})$ (resp., $H_-^2(M,\mathbb{R})$)
		is the space of \emph{self-dual} (resp., \emph{anti-self-dual})
		harmonic forms (recall \eqref{formdeco}). If $b_2$ denotes
		the second Betti number, i.e. the dimension of $H^2(M,\mathbb{R})$,
		we can also define $b_2^{\pm}$ as the dimensions of 
		$H_{\pm}^2(M,\mathbb{R})$ and obtain a fundamental invariant,
		which is the \emph{signature} $\tau(M)$ of the manifold $M$,
		defined as
		\begin{equation} \label{signature}
			\tau(M)=b_2^+-b_2^-.
		\end{equation} 
		As we did in the Introduction, we can define \eqref{signature} as the signature
		of the intersection form induced by the cup product on 
		$H^2(M,\mathbb{R})\times H^2(M,\mathbb{R})$. 
		The well-known Hirzebruch's signature
		formula allows us to express \eqref{signature} in terms
		of an integral involving curvature quantities: indeed,
		for every four-dimensional closed Riemannian manifold $(M,g)$, 
		we have the following integral formula (\eqref{hirze} in the Introduction)
		\begin{equation*}
			48\pi^2\tau(M)=\int_M\lvert\operatorname{W}^+\rvert^2
			-\lvert\operatorname{W}^-\rvert^2dV_g,
		\end{equation*}
		which, by \eqref{locstarweyl} and
		\eqref{squarenormweyl},
		can also be written as
		\[
		48\pi^2\tau(M)=\int_M
		\operatorname{tr(\operatorname{W}\circ (\star W))dV_g.
		}
		\]
		Since reversing the orientation swaps 
		$\Lambda_+$ and $\Lambda_-$ and, consequently, $\operatorname{W}^+$ and
		$\operatorname{W}^-$, by looking at the definition of the intersection form
		it is easy to see that 
		$\tau(M)$ changes sign under an orientation change: however, 
		$\lvert\tau(M)\rvert$ is a topological invariant. 
		
		From a topological viewpoint, it
		is possible to exploit some 
		properties of the Weyl tensor to
		draw some strong conclusions: for instance, \eqref{hirze} and \eqref{weylequal} allow us to derive the following result:
		\begin{cor} \label{signaturehyper}
			Let $(M^4,g)$ be a closed, oriented, isometrically immersed hypersurface in a space form $(N^5(c),g_N)$. Then, 
			$\tau(M)=0$. 
		\end{cor}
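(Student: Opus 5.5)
The plan is to combine Hirzebruch's signature formula \eqref{hirze} with the pointwise identity of Theorem \ref{equalweylnorms}. Since $(M^4,g)$ is closed and oriented, with the orientation induced by that of $N^5(c)$, the signature formula applies to the induced metric $g$:
\[
48\pi^2\tau(M)=\int_M\left(\lvert\operatorname{W}^+\rvert^2-\lvert\operatorname{W}^-\rvert^2\right)dV_g .
\]

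By \eqref{weylequal}, the integrand vanishes at every point of $M$. Indeed, both $\lvert\operatorname{W}^+\rvert^2$ and $\lvert\operatorname{W}^-\rvert^2$ are given by the same extrinsic expression \eqref{weylplusminusnormhyper} in $S$, $H$, $\lvert A^2\rvert^2$ and $\operatorname{tr}A^3$. Equivalently, $\operatorname{tr}(\operatorname{W}\circ(\star\operatorname{W}))=W_{ijkl}(\star W)_{ijkl}\equiv 0$. Hence the integral is zero and $\tau(M)=0$.

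The only point needing care is whether \eqref{weylequal} really holds identically. It comes from the ``long, yet straightforward'' computation behind \eqref{weylplusminusnormhyper}, which I take as given from the paper.

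If one wants an independent check of the cross term, expand $W_{ijkl}(\star W)_{ijkl}$ using \eqref{weylconst} and \eqref{hodgeweylhyper}. Every resulting term is a full contraction of $\mu_{ijkl}$ against symmetric tensors built from $A$, $A^2$ and $\delta$. Each such term vanishes: either by \eqref{volformcontractsymm}, or because it contains $\mu_{ijkl}\delta_{ik}=0$ or $\mu_{ijkl}A_{ik}A_{jl}=\det$-type contractions that are antisymmetric under swapping $(i,k)\leftrightarrow(j,l)$. Organising this bookkeeping is the main obstacle, and it is exactly what Theorem \ref{equalweylnorms} already provides. So the corollary follows in one line from \eqref{hirze} and \eqref{weylequal}.
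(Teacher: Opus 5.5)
Your proof is correct and is the paper's own argument: the corollary follows by inserting the pointwise identity \eqref{weylequal} of Theorem \ref{equalweylnorms} into Hirzebruch's formula \eqref{hirze}, so the integrand $\lvert\operatorname{W}^+\rvert^2-\lvert\operatorname{W}^-\rvert^2$ vanishes identically. In your optional side check, the justification should be corrected, although the argument does not need it. $\mu_{ijkl}A_{ik}A_{jl}$ vanishes because $\mu_{ijkl}$ is antisymmetric in $i,k$ while $A_{ik}$ is symmetric; the swap $(i,k)\leftrightarrow(j,l)$ actually leaves $\mu_{ijkl}$ unchanged. Moreover, the term-by-term vanishing relies on all tensors involved being polynomials in $A$, so that they commute.
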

		\begin{rem} \label{pontrjagin}
			Corollary \ref{signaturehyper} is, in fact, a particular case
			of a more general result.  Indeed, in dimension four
			$\tau(M)=\frac{1}{3}p_1(M)$, where $p_1(M)=p_1(TM)$ is the first
			Pontrjagin class of the tangent bundle $TM$: it is know that, given a Whitney sum of vector bundles $E$ and $F$ over a smooth manifold $M$, the total Pontrjagin class $p(E\oplus F)$ is realized as the cup product of $p(E)$ and $p(F)$, which implies that $p_1(E\oplus F)=p_1(E)+p_1(F)$. 
			In our case, given a submanifold $M$ of a smooth manifold $N$, by the naturality of the Pontrjagin classes with respect to the pullback of bundles (see e.g. \cite{hirze}), we have
			\[
			p_1(TN)=p_1(TM)+p_1(TM^{\perp}).
			\]
			If $N$ and $M$ are orientable and $M$ has codimension $1$, $TM^{\perp}$ is a trivial line bundle, then its first Pontrjagin class vanishes: hence, $p_1(TN)=p_1(TM)$, which means that, if the first Pontrjagin class of $TN$ vanishes, then $p_1(TM)=0$. Therefore, we can say that, given an orientable smooth 5-manifold $N$ with vanishing first Pontrjagin class, every oriented hypersurface $M$ of $N$ must have $\tau(M)=0$: we highlight the fact that every 5-manifold admitting a Riemannian metric with constant sectional curvature has vanishing first Pontrjagin class (see also Remark \ref{remavez}). 
		\end{rem}
		Now recall that, by Poincaré duality,
		the Euler characteristic of a four-dimensional compact topological
		manifold $X$ is given by
		\begin{equation} \label{eulerclosed}
			\chi(X)=2b_0-2b_1+b_2.
		\end{equation}
		Note that $b_0=1$ if and only if $M$ is connected.
		By Corollary \ref{signaturehyper} and \eqref{eulerclosed}
		we can easily state the following
		\begin{cor} \label{eulercor}
			The Euler characteristic $\chi(M)$ of $M$ is an even integer. 
		\end{cor}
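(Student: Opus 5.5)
The claim is that $\chi(M)$ is even. We will combine the vanishing of the signature from Corollary \ref{signaturehyper} with the Betti-number expression \eqref{eulerclosed}. The standing hypotheses of that corollary are in force: $M$ is closed, connected and oriented, and immersed in a space form.

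First, by Poincaré duality on the closed oriented four-manifold $M$, we have $b_4=b_0$ and $b_3=b_1$. This gives \eqref{eulerclosed}, namely $\chi(M)=2b_0-2b_1+b_2$. The quantity $2b_0-2b_1$ is automatically even, so it suffices to show that $b_2$ is even.

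Second, we use the splitting \eqref{cohomdeco} of the harmonic $2$-forms into self-dual and anti-self-dual parts. This gives $b_2=b_2^++b_2^-$. By \eqref{signature}, $b_2^+-b_2^-=\tau(M)$.

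Third, Corollary \ref{signaturehyper} gives $\tau(M)=0$, so $b_2^+=b_2^-$. Therefore $b_2=2b_2^+$ is even, and hence $\chi(M)=2(b_0-b_1+b_2^+)$ is even.

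The only point that needs care is the validity of \eqref{eulerclosed} itself, which requires $M$ to be closed and orientable so that Poincaré duality applies. This is already assumed, since Corollary \ref{signaturehyper} needs an oriented closed hypersurface. No analytic estimate is involved beyond the pointwise identity $\lvert\operatorname{W}^+\rvert^2=\lvert\operatorname{W}^-\rvert^2$ from Theorem \ref{equalweylnorms}, which enters through Hirzebruch's formula \eqref{hirze}. The same argument yields the locally conformally flat version in Corollary \ref{signaturehyperlcf}, once Theorem \ref{equalweylnormslcf} is available.
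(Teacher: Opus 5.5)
Your proof is correct and is essentially the paper's argument: the paper also uses $\tau(M)=0$ from Corollary \ref{signaturehyper} together with \eqref{cohomdeco} and \eqref{signature} to get $b_2=2b_2^+$, and then rewrites \eqref{eulerclosed} as $\chi(M)=2(b_0-b_1+b_2^+)$. Your remark that closedness and orientability are what make \eqref{eulerclosed} valid is the only addition.
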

		\begin{proof}
			Since $\tau(M)=0$ by Corollary \ref{signaturehyper}, we have that,
			by \eqref{cohomdeco} and \eqref{signature},
			\[
			\operatorname{dim}H^2(M,\mathbb{R})=b_2=
			b_2^++b_2^-=2b_2^+.
			\]
			Therefore, \eqref{eulerclosed} becomes
			\[
			\chi(M)=2(b_0-b_1+b_2^+).
			\]
		\end{proof}
		Since we are dealing with 
		four-dimensional hypersurfaces in
		space forms, it
		is natural to consider an \emph{extrinsic version of the
			Chern--Gauss--Bonnet formula} for
		the Euler characteristic. First, let
		us define the \emph{traceless Ricci
			tensor} $\mathring{\operatorname{Ric}}=
		\operatorname{Ric}-\frac{R}{4}g$:
		by \eqref{ricconst} and
		\eqref{scalconst}, we immediately
		obtain
		\begin{equation} \label{riccisquare}
			\lvert\mathring{\operatorname{Ric}}\rvert^2=\lvert A^2\rvert^2-
			\dfrac{1}{4}S^2
			+\dfrac{3}{2}H^2S-2H\operatorname{tr}(A^3)-\dfrac{H^4}{4}.
		\end{equation}
		Recall that the usual Chern--Gauss--Bonnet
		formula reads as
		\begin{equation} \label{cherngaussbonnet}
			32\pi^2\chi(M)=\int_M\lvert\operatorname{W}\rvert^2-2\lvert\mathring{\operatorname{Ric}}\rvert^2+\dfrac{R^2}{6}dV_g:
		\end{equation}
		hence, exploiting 
		\eqref{scalconst}, \eqref{weylnormhyper} and \eqref{riccisquare}, we obtain
		the following (see also \cite{case2024gauss} for some analogy in the case of Poincaré-Einstein ambient spaces)
		\begin{lemma}[Extrinsic Chern--Gauss--Bonnet formula]
			Let $(M^4,g)$ be a closed, oriented, isometrically immersed hypersurface of a space form 
			$(N^5(c),g_N)$. Then, the following integral equality holds:
			\begin{equation} \label{cherngaussbonnethyper}
				32\pi^2\chi(M)=
				\int_M\left[
				3S^2-
				6\lvert A^2\rvert^2-
				\dfrac{16}{3}H^2S+H^4+8H\operatorname{tr}(A^3)+
				4c(6c-S+H^2)\right]dV_g.
			\end{equation}
			In particular, if $M$ is minimal,
			we have
			\begin{equation} \label{cherngaussbonnetminimal}
				32\pi^2\chi(M)=\int_M
				\left[3S^2-
				6\lvert A^2\rvert^2+
				4c(6c-S)\right]dV_g.
			\end{equation}
		\end{lemma}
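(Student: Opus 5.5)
Since the Chern--Gauss--Bonnet formula \eqref{cherngaussbonnet} holds for every closed oriented Riemannian four-manifold, the plan is to substitute into it the extrinsic expressions already obtained for each curvature term and simplify. The three inputs are \eqref{weylnormhyper} for $\lvert\operatorname{W}\rvert^2$, \eqref{riccisquare} for $\lvert\mathring{\operatorname{Ric}}\rvert^2$, and \eqref{scalconst} with $n=4$, which gives $R=12c+H^2-S$. With these, the integrand $\lvert\operatorname{W}\rvert^2-2\lvert\mathring{\operatorname{Ric}}\rvert^2+R^2/6$ becomes a polynomial in $S$, $\lvert A^2\rvert^2$, $H$, $\operatorname{tr}(A^3)$ and $c$. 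It remains to collect coefficients.

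First I compute $R^2/6=\frac{1}{6}(12c+H^2-S)^2$, which expands to
\[
24c^2+4c(H^2-S)+\tfrac{1}{6}H^4-\tfrac{1}{3}H^2S+\tfrac{1}{6}S^2 .
\]
This is the only source of $c$, and it produces exactly the term $4c(6c-S+H^2)$. Next I compute $-2\lvert\mathring{\operatorname{Ric}}\rvert^2=-2\lvert A^2\rvert^2+\frac{1}{2}S^2-3H^2S+4H\operatorname{tr}(A^3)+\frac{1}{2}H^4$.

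Then I sum coefficients term by term:
\begin{itemize}
\item $S^2$: $\frac{7}{3}+\frac{1}{2}+\frac{1}{6}=3$;
\item $\lvert A^2\rvert^2$: $-4-2=-6$;
\item $H^2S$: $-\frac{8}{3}-3-\frac{1}{3}=-6$;
\item $H^4$: $\frac{1}{3}+\frac{1}{2}+\frac{1}{6}=1$;
\item $H\operatorname{tr}(A^3)$: $4+4=8$.
\end{itemize}

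This bookkeeping is the only real obstacle. The $H^2S$ coefficient comes out as $-6$ rather than the stated $-\frac{16}{3}$, so I must recheck \eqref{riccisquare} directly from \eqref{ricconst}. With $R_{ij}=3c\delta_{ij}+HA_{ij}-A^2_{ij}$, the traceless part is $HA_{ij}-A^2_{ij}-\frac{1}{4}(H^2-S)\delta_{ij}$. Its squared norm is
\[
H^2S+\lvert A^2\rvert^2-2H\operatorname{tr}(A^3)-\tfrac14(H^2-S)^2,
\]
because the cross terms with $\delta$ contribute $-\frac{1}{2}(H^2-S)^2+\frac{1}{4}(H^2-S)^2$. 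Expanding gives $\lvert A^2\rvert^2-\frac{1}{4}S^2+\frac{3}{2}H^2S-2H\operatorname{tr}(A^3)-\frac{1}{4}H^4$, which confirms \eqref{riccisquare}. Any discrepancy would then have to come from the $H^2S$ coefficient in \eqref{weylnormhyper}, so I would recheck that coefficient in the derivation of \eqref{weylplusminusnormhyper} and adjust the final coefficient accordingly.

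Regardless of that coefficient, every $H$-dependent term drops out in the minimal case. Setting $H\equiv0$ then immediately gives \eqref{cherngaussbonnetminimal}:
\[
32\pi^2\chi(M)=\int_M\left[3S^2-6\lvert A^2\rvert^2+4c(6c-S)\right]dV_g,
\]
which is the form used in the rest of the paper.
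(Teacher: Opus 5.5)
Your route is the paper's: the lemma is obtained there by substituting \eqref{scalconst}, \eqref{weylnormhyper} and \eqref{riccisquare} into \eqref{cherngaussbonnet}. Your bookkeeping is correct, including the coefficient $-6$ for $H^2S$. What needs fixing is your diagnosis of the mismatch: the coefficient $-\frac{16}{3}$ in the statement is wrong, not \eqref{weylnormhyper}.

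\eqref{weylnormhyper} is right. For instance, at an umbilic point ($S=4\lambda^2$, $H=4\lambda$, $\lvert A^2\rvert^2=4\lambda^4$, $\operatorname{tr}(A^3)=4\lambda^3$) it gives $(\frac{512}{3}-\frac{8}{3}\cdot 64)\lambda^4=0$, as it must. Note also that $-\frac{16}{3}=-\frac{8}{3}-3+\frac{1}{3}$, which is what you get by taking the $H^2S$ term of $R^2/6$ with the wrong sign.

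For an independent confirmation, work in a principal frame. The curvature operator is diagonal with sectional curvatures $K_{ij}=c+\lambda_i\lambda_j$. Hence the integrand of \eqref{cherngaussbonnet}, which equals $\lvert\operatorname{Riem}\rvert^2-4\lvert\operatorname{Ric}\rvert^2+R^2$, is
$8(K_{12}K_{34}+K_{13}K_{24}+K_{14}K_{23})=24c^2+8c\sigma_2+24\sigma_4$,
where the $\sigma_k$ are the elementary symmetric functions of the $\lambda_i$. With $\sigma_2=\frac{1}{2}(H^2-S)$ and Newton's identity
$24\sigma_4=H^4-6H^2S+3S^2+8H\operatorname{tr}(A^3)-6\lvert A^2\rvert^2$,
this is exactly your formula with $-6H^2S$.

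The round sphere $\mathbb{S}^4(r)\subset\mathbb{R}^5$ settles it. With $-6$ the integrand is $24/r^4$ and one gets $\chi=2$. With $-\frac{16}{3}$ it is $200/(3r^4)$, and one would get $\chi=50/9$.

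So the general identity cannot be proved as printed. What your argument establishes, and what is true, is \eqref{cherngaussbonnethyper} with $-6H^2S$ in place of $-\frac{16}{3}H^2S$. You should state that conclusion outright rather than leave the coefficient to be ``adjusted'' after rechecking a formula that is already correct. The minimal case \eqref{cherngaussbonnetminimal} is unaffected, and your derivation of it is complete; it is the version used in all of the paper's later arguments.
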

		The study of the Weyl tensor
		in the context of immersed
		hypersurfaces might lead to 
		interesting characterization
		results: for instance, as we saw before,
		it is possible to identify
		$\operatorname{W}^{\pm}$ with symmetric, and
		hence diagonalizable,
		endomorphisms $\mathcal{W}^{\pm}$ of $\Lambda_{\pm}$. Furthermore, if we choose an orthonormal Darboux frame around $p\in M$ such that $A$ is in diagonal form at $p$ (see \eqref{diagonalsecond}), it is easy to realize that the endomorphisms $\mathcal{W}^{\pm}$ are in diagonal form at $p$. In virtue of \eqref{weylplusminushyper}, we can see that
		\begin{equation} \label{weyleigen}
			W_{ijij}^{\pm}=
			\dfrac{1}{4}(\lambda_i+\lambda_j)\left(\lambda_i+\lambda_j-H\right)+
			\dfrac{1}{12}\left(H^2-S\right), \quad i<j;
		\end{equation}
		note that, by \eqref{hodgeweylplusmincomp},
		\[
		W_{1212}^{\pm}=\pm W_{1234}^{\pm}=W_{3434}^{\pm}, \quad
		W_{1313}^{\pm}=\pm W_{1342}^{\pm}=W_{4242}^{\pm}, \quad
		W_{1414}^{\pm}=\pm W_{1423}^{\pm}=W_{2323}^{\pm}.
		\]
		Hence, it is easy to observe that the eigenvalues of $\mathcal{W}^{\pm}$ are 
		$2W_{1212}^{\pm}$, $2W_{1313}^{\pm}$ and $2W_{1414}^{\pm}$ with respect
		to a local orthonormal frame which diagonalizes $\mathcal{W}^{\pm}$:
		moreover, for every $i\neq j$, $W_{ijij}^+=W_{ijij}^-$ with respect to this frame.
		
		First, we rewrite a particular case of the well-known characterization result due to
		Nishikawa and Maeda 
		(\cite{NiskikawaMaeda1974}):
		\begin{proposition} \label{nishi}
			Let $(M^4,g)$ be an oriented, isometrically immersed hypersurface in a space form $(N^5(c),g_N)$.
			Then, the (anti-)self-dual Weyl tensor of $(M,g)$ vanishes at $p\in M$ if and only if at least
			three principal curvatures
			are equal at $p$.
		\end{proposition}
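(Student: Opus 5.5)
We work at a fixed $p\in M$ in a Darboux frame diagonalizing $A$, so $A_{ij}=\lambda_i\delta_{ij}$. As observed just before the statement, $\mathcal{W}^{\pm}$ is then diagonal, with eigenvalues $2W^{\pm}_{1212}$, $2W^{\pm}_{1313}$, $2W^{\pm}_{1414}$. These are given by \eqref{weyleigen}, and $W^{\pm}_{ijij}=W^{\pm}_{klkl}$ whenever $\{i,j,k,l\}=\{1,2,3,4\}$. Since $\mathcal{W}^{\pm}$ is symmetric, $\operatorname{W}^{\pm}$ vanishes at $p$ exactly when its three eigenvalues vanish. So the plan is to rewrite those three numbers in a convenient form and find when they are simultaneously zero.

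\textbf{Rewriting the eigenvalues.} Set $\mu_{ij}:=\lambda_i+\lambda_j$ and pair the complementary indices. Since $\mu_{12}+\mu_{34}=H$, \eqref{weyleigen} gives
\[
W^{\pm}_{1212}=-\tfrac14\mu_{12}\mu_{34}+\tfrac1{12}(H^2-S),
\]
and the analogous formulas hold for $(13,24)$ and $(14,23)$. Because $\mathcal{W}^{\pm}$ is trace-free, the three eigenvalues sum to zero. Therefore they all vanish if and only if
\[
\mu_{12}\mu_{34}=\mu_{13}\mu_{24}=\mu_{14}\mu_{23},
\]
where these products automatically equal $(H^2-S)/3$ once they coincide. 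This trace-free identity, $\sum_{\mathrm{pairs}}\mu\mu' = H^2-S$, is a quick check from $\sum_{i<j}\lambda_i\lambda_j=(H^2-S)/2$.

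\textbf{Factoring the differences.} We compute
\[
\mu_{12}\mu_{34}-\mu_{13}\mu_{24}=(\lambda_1-\lambda_4)(\lambda_2-\lambda_3),
\]
which follows by expanding both products; the terms $\lambda_1\lambda_3+\lambda_2\lambda_4$ and $\lambda_1\lambda_2+\lambda_3\lambda_4$ cancel. Similarly,
\[
\mu_{12}\mu_{34}-\mu_{14}\mu_{23}=(\lambda_1-\lambda_3)(\lambda_2-\lambda_4).
\]
Hence $\operatorname{W}^{\pm}(p)=0$ if and only if
\[
(\lambda_1-\lambda_4)(\lambda_2-\lambda_3)=0 \quad\text{and}\quad (\lambda_1-\lambda_3)(\lambda_2-\lambda_4)=0.
\]
The consequence $(\lambda_1-\lambda_2)(\lambda_3-\lambda_4)=0$ is then automatic.

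\textbf{Case analysis.} We check that these equations force three equal principal curvatures.
\begin{itemize}
\item If $\lambda_1=\lambda_4$, the second equation gives either $\lambda_1=\lambda_3$, so $\lambda_1=\lambda_3=\lambda_4$, or $\lambda_2=\lambda_4$, so $\lambda_1=\lambda_2=\lambda_4$.
\item If instead $\lambda_2=\lambda_3$, the second equation gives either $\lambda_1=\lambda_3$, so $\lambda_1=\lambda_2=\lambda_3$, or $\lambda_2=\lambda_4$, so $\lambda_2=\lambda_3=\lambda_4$.
\end{itemize}
Conversely, if any three of the $\lambda_i$ coincide, each of the two products contains a vanishing factor, so all three eigenvalues of $\mathcal{W}^{\pm}$ vanish. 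The $+$ and $-$ cases are identical because $W^+_{ijij}=W^-_{ijij}$ in this frame.

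The only delicate point is verifying the factorization identities and the pairing $\mu_{ij}+\mu_{kl}=H$. Both are short algebra, so I do not expect a genuine obstacle. Equivalently, one could invoke Theorem \ref{equalweylnorms} and check that $\lvert\operatorname{W}\rvert^2$ vanishes exactly in this case, but the eigenvalue route above is cleaner.
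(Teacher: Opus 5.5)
Your argument is correct and is essentially the paper's. Its converse also reads the eigenvalues off \eqref{weyleigen} and extracts the factors $(\lambda_2-\lambda_3)(\lambda_1-\lambda_4)$ and $(\lambda_1-\lambda_2)(\lambda_2-\lambda_4)$. Two differences are worth noting. You eliminate the constant $\frac{1}{12}(H^2-S)$ via trace-freeness, which spares you the paper's ``without loss of generality'' step. You also get the ``if'' direction from the same factorization, whereas the paper substitutes three equal curvatures into the norm formula \eqref{weylplusminusnormhyper}.

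One harmless slip: both factorizations are off by a sign. For example,
$(\lambda_1+\lambda_2)(\lambda_3+\lambda_4)-(\lambda_1+\lambda_3)(\lambda_2+\lambda_4)=(\lambda_1-\lambda_4)(\lambda_3-\lambda_2)$.
Also, the terms that cancel are $\lambda_1\lambda_4+\lambda_2\lambda_3$, not the ones you list. Since only the vanishing of these products matters, nothing else changes.
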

		\begin{proof}
			Assume that three
			principal curvatures are
			equal at $p\in M$: for instance, without loss of
			generality we can set $\lambda:=\lambda_1=\lambda_2=\lambda_3$ and $\mu=\lambda_4$. 
			In this case, \eqref{weylplusminusnormhyper}
			becomes
			\begin{align*}
				\lvert\operatorname{W}^{\pm}\rvert^2&=
				\dfrac{7}{6}(3\lambda^2+\mu^2)^2+
				\dfrac{1}{6}(3\lambda+\mu)^4
				-2(3\lambda^4+\mu^4)+\\
				&+2(3\lambda+\mu)(3\lambda^3+\mu^3)-
				\dfrac{4}{3}(3\lambda+\mu)^2
				(3\lambda^2+\mu^2)=0.
			\end{align*}
			Conversely, let 
			$\operatorname{W}^+\equiv 0$
			at $p\in M$: since,
			by hypothesis, 
			$W_{ijij}^+=0$ for every $i,j$,
			choosing a frame which diagonalizes $\mathcal{W}^+$, by \eqref{weyleigen} we obtain
			\[
			(\lambda_1+\lambda_2)(\lambda_1+\lambda_2-H)=
			(\lambda_1+\lambda_3)(\lambda_1+\lambda_3-H)=
			\dfrac{1}{3}(S-H^2)
			\]
			The first equality can be rewritten as
			\[
			(\lambda_2-\lambda_3)[2\lambda_1+\lambda_2+\lambda_3-
			H]=0.
			\]
			If $\lambda_2\neq\lambda_3$ at $p$,
			then 
			\[
			0=2\lambda_1+\lambda_2+\lambda_3-
			H=\lambda_1-\lambda_4,
			\]
			i.e. $\lambda_1=\lambda_4$. Hence
			we obtain that either $\lambda_2=\lambda_3$ or $\lambda_1=\lambda_4$: without loss
			of generality, we may assume that 
			$\lambda_2=\lambda_3$. Then,
			using the definition of $H$, we obtain
			\[
			-(\lambda_1+\lambda_2)(\lambda_2+\lambda_4)=\dfrac{1}{3}
			[\lambda_1^2+2\lambda_2^2+
			\lambda_4^2-(\lambda_1+2\lambda_2+\lambda_4)^2],
			\]
			i.e.
			\[
			\lambda_2^2-\lambda_1\lambda_2+
			\lambda_1\lambda_4-\lambda_2\lambda_4=0
			\Longrightarrow 
			(\lambda_1-\lambda_2)(\lambda_2-\lambda_4)=0
			\]
			and this concludes the proof.
		\end{proof}
		\begin{rem}
			We recall that, by the work of Do Carmo and Dajczer, a minimal hypersurface in a complete, simply connected space form is locally conformally flat if and only if it is contained in a catenoid \cite{docarmodaj}. 
		\end{rem}
		
		There exists a connection between the eigenvalues of $\mathcal{W}^{\pm}$ and the principal curvatures of an immersed hypersurface: namely, we can prove the following
		\pagebreak
		\begin{theorem} \label{eigentheorem}
			Let $(M^4,g)$ be an oriented
			hypersurface isometrically immersed in a space form $(N^5(c),g_N)$:
			\begin{enumerate}
				\item \label{firststateWeyl} if $m$ is the number
				of distinct principal curvatures 
				and $w$ is the number of distinct
				eigenvalues of $\mathcal{W}^+$, then,
				at $p\in M$
				\begin{itemize}
					\item $w=3$
					if and only if $m=4$;
					\item $w=2$ if and only if 
					$m=3$ or there are two distinct principal curvatures, both with multiplicity two (in this case, $m=2$);
					\item $w=1$ if and only if 
					at least three principal curvatures are equal (and, hence, $m\leq 2$). In this case,
					$\operatorname{W}^+\equiv 0$
					at $p$;
				\end{itemize}
				\item \label{secondstateWeyl} if the scalar curvature $R$ and
				the mean curvature of $(M^4,g)$ are
				constant, $(M^4,g)$ is
				isoparametric if and only if
				the eigenvalues of 
				$\mathcal{W}^+$ are
				constant functions. 
			\end{enumerate}
			The same results hold for $\mathcal{W}^-$.
		\end{theorem}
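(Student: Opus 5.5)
We work at $p$ in a Darboux frame diagonalizing $A$, so that by \eqref{weyleigen} $\mathcal{W}^+$ is diagonal with eigenvalues $2W^+_{1212}$, $2W^+_{1313}$, $2W^+_{1414}$. The key step is to rewrite these in a symmetric form. Since $\lambda_1+\lambda_2-H=-(\lambda_3+\lambda_4)$, \eqref{weyleigen} gives
\[
2W^+_{1212}=-\dfrac12(\lambda_1+\lambda_2)(\lambda_3+\lambda_4)+\dfrac16(H^2-S),
\]
and similarly for the other two, with the pairings $\{13|24\}$ and $\{14|23\}$. Set $\sigma_1=(\lambda_1+\lambda_2)(\lambda_3+\lambda_4)$, $\sigma_2=(\lambda_1+\lambda_3)(\lambda_2+\lambda_4)$, $\sigma_3=(\lambda_1+\lambda_4)(\lambda_2+\lambda_3)$. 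Then the eigenvalues of $\mathcal{W}^+$ are affine images, with a common shift, of $\sigma_1,\sigma_2,\sigma_3$, so coincidences among them are exactly coincidences among the $\sigma_a$. Direct expansion gives
\[
\sigma_1-\sigma_2=(\lambda_1-\lambda_4)(\lambda_3-\lambda_2),\quad
\sigma_1-\sigma_3=(\lambda_1-\lambda_3)(\lambda_4-\lambda_2),\quad
\sigma_2-\sigma_3=(\lambda_1-\lambda_2)(\lambda_4-\lambda_3).
\]
These are the same factorizations that appear in the proof of Proposition \ref{nishi}. Since $W^+_{ijij}=W^-_{ijij}$ in this frame, everything below applies verbatim to $\mathcal{W}^-$.

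For part (1), each difference $\sigma_a-\sigma_b$ vanishes if and only if one of the two complementary pairs of principal curvatures coincides.
\begin{itemize}
\item If $m=4$, no pair coincides, so all three differences are nonzero and $w=3$.
\item If $m=3$, say exactly $\lambda_1=\lambda_2$, then exactly $\sigma_2=\sigma_3$, so $w=2$.
\item If the multiplicities are $(2,2)$, say $\lambda_1=\lambda_2\neq\lambda_3=\lambda_4$, then $\sigma_1-\sigma_2=(\lambda_1-\lambda_4)(\lambda_3-\lambda_2)\neq0$ while $\sigma_2=\sigma_3$, so again $w=2$.
\item If at least three principal curvatures are equal, every difference contains a vanishing factor, so $w=1$. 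Since $\mathcal{W}^+$ is trace-free, it then vanishes, in accordance with Proposition \ref{nishi}.
\end{itemize}
These cases exhaust all multiplicity patterns, so the listed conditions are mutually exclusive and exhaustive, and the "if and only if" statements follow.

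For part (2), assume $H$ and $R$ are constant; then $S$ is constant by \eqref{scalconst}. If $M$ is isoparametric, the $\sigma_a$, and hence the eigenvalues of $\mathcal{W}^+$, are obviously constant. For the converse, suppose the $\sigma_a$ are constant. Then we have the constants $e_1=H$ and $p_2=S$, hence $e_2=\frac12(H^2-S)$. From the $\sigma_a$ we obtain the symmetric functions of $\sigma_1,\sigma_2,\sigma_3$:
\begin{itemize}
\item $\sigma_1+\sigma_2+\sigma_3=2e_2$, which is already constant;
\item $\sigma_1\sigma_2+\sigma_1\sigma_3+\sigma_2\sigma_3=e_2^2+e_1e_3-4e_4$;
\item $\sigma_1\sigma_2\sigma_3=e_1e_2e_3-e_1^2e_4-e_3^2$.
\end{itemize}
These are the classical resolvent-cubic identities for the quartic $\prod_i(x-\lambda_i)$.

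Knowing $e_1$ and $e_2$, these two relations form a system in $e_3$ and $e_4$. The main obstacle is showing that this system forces $e_3$ and $e_4$ to be constant.
\begin{itemize}
\item If $H\neq0$, the first relation expresses $e_4$ affinely in $e_3$. Substituting into the second yields a nondegenerate quadratic in $e_3$: its $e_3^2$ coefficient is $-1+e_1^2/4\cdot\ldots$, which must be checked to be nonzero, or else a cancellation must be handled separately. The discrete solution set together with continuity of $e_3$ on connected $M$ then gives $e_3$, and hence $e_4$, constant.
\item If $H=0$, the relations read $-4e_4=\text{const}$ and $-e_3^2=\text{const}$, so $e_4$ is constant and $e_3^2$ is constant. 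Connectedness and continuity then make $e_3$ constant.
\end{itemize}
In either case all $e_k$ are constant, so the characteristic polynomial of $A$ is fixed and the principal curvatures, as its ordered roots, are constant. Thus $M$ is isoparametric.

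The care point is the degenerate coefficient in the $H\neq0$ case: I would verify it by explicit elimination. If it degenerates, I would instead use $p_3=\operatorname{tr}A^3$ together with the constancy of $|\operatorname{W}^+|^2=\frac13\sum(\text{eigenvalues})^2$, via \eqref{weylplusminusnormhyper}. That identity gives an additional relation between $|A^2|^2$ and $\operatorname{tr}A^3$, from which $e_3$ and $e_4$ can be pinned down.
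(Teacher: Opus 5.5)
Your proposal is correct. Part (1) is essentially the paper's argument: the paper also reduces coincidences among the eigenvalues \eqref{weyleigen} to the factorization $(\lambda_2-\lambda_3)(\lambda_1-\lambda_4)=0$ from the proof of Proposition \ref{nishi}, and your $\sigma_a$ simply record all three pairings at once.

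Part (2) follows a genuinely different route. The paper splits according to the constant value of $w$:
\begin{itemize}
\item for $w=1$ it uses Proposition \ref{nishi} and the constancy of $H$ and $S$;
\item for $w=2$ it solves explicitly for the principal curvatures in terms of $H$, $S$ and $C=W^+_{1212}$, and invokes Cartan's classification to rule out $m=3$, which forces multiplicities $(2,2)$;
\item for $w=3$ it uses smoothness of simple principal curvatures and differentiates \eqref{weyleigen} to obtain $d\lambda_i=0$.
\end{itemize}
Your resolvent-cubic elimination replaces all of this with one pointwise algebraic fact---$H$, $S$ and the spectrum of $\mathcal{W}^+$ determine the characteristic polynomial of $A$ up to at most two choices---plus continuity and connectedness. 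So you need no case analysis on $w$, no differentiability of eigenvalues, and no classification theorem. The paper's route gives slightly finer information in the $w=2$ case, since it excludes $m=3$ directly.

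Your hedge in the case $H\neq0$ is unnecessary. Write $Q:=\sum_{a<b}\sigma_a\sigma_b$ and substitute $e_4=\frac14(e_2^2+e_1e_3-Q)$ into $\sigma_1\sigma_2\sigma_3=e_1e_2e_3-e_1^2e_4-e_3^2$. The term $e_1^2e_4$ then contributes only $-\frac14e_1^3e_3$, which is linear in $e_3$. You therefore always get the monic quadratic $e_3^2-\bigl(e_1e_2-\frac14e_1^3\bigr)e_3+\mathrm{const}=0$, with $H=0$ as a special case.

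Your proposed fallback via $\lvert\operatorname{W}^+\rvert^2$ would not have added anything in any case. It is a symmetric function of the eigenvalues of $\mathcal{W}^+$, so it is already determined by $\sum_a\sigma_a$ and $Q$. Also, in the paper's normalization it equals $4$ times the sum of their squares, not $\frac13$ of it.
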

		\begin{proof}
			First, it is obvious that interchanging the roles of $\mathcal{W}^+$ and $\mathcal{W}^-$ does not change the statement, since, at every point, they have the same eigenvalues by \eqref{weyleigen}. 
			\begin{enumerate}
				\item The last claim is
				immediate to show, since it is
				a direct consequence of the
				computations made in
				the proof of Proposition 
				\ref{nishi}: recall that,
				since $\operatorname{W}^+$ is
				a traceless tensor, if
				all the eigenvalues of $\mathcal{W}^+$ are all equal they
				have to be $0$, which means that
				$\operatorname{W}^+\equiv 0$ at $p$.
				
				Now,
				assume that $w=2$ at $p\in M$:
				without loss of generality,
				we might suppose that 
				$W_{1212}^+=W_{1313}^+$.
				Then, \eqref{weyleigen} implies that
				\[
				(\lambda_1+\lambda_2)(\lambda_1+\lambda_2-H)=(\lambda_1+\lambda_3)(\lambda_1+\lambda_3-H).
				\]
				Following the same line
				of reasoning of Proposition
				\ref{nishi}, we obtain that
				either $\lambda_2=\lambda_3$ or
				$\lambda_1=\lambda_4$: therefore,
				$m\leq 3$. Since $w=2$, $\operatorname{W}^+$ does not
				identically vanish at $p$, which means
				that, if $m=2$ at $p$, then both distinct principal curvatures have multiplicity two, since there cannot be three equal principal curvatures. 
				
				Conversely, by \eqref{weyleigen}
				it is obvious that, if $m=3$, then
				$w=2$: furthermore, if there are two distinct principal curvatures with multiplicity two, 
				then $w>1$, otherwise we 
				would have that $\operatorname{W}^+
				\equiv 0$ at $p$.
				
				\noindent
				Finally, since all the other cases
				are covered, we conclude that 
				$w=3$ if and only if $m=4$.
				\item
				It is
				immediate to see that, if 
				$(M^4,g)$ is isoparametric, then
				the eigenvalues of $\mathcal{W}^+$ are constant functions. Conversely, we first note that, since $w$ is constant on $M$, $w=1$ if and only if $(M^4,g)$ is locally conformally flat: furthermore, since $S$ is constant on $M$, this implies that $M$ is isoparametric. Now, let us assume that $w=2$, which means that, at every $p\in M$, there are at most three distinct principal curvatures, i.e. $m\leq 3$. Without loss of generality, assume that $\lambda_1=\lambda_2$ at $p\in M$ and denote $W_{1212}^+=C$: a straightforward computation shows that, up to exchanging the roles of $\lambda_3$ and $\lambda_4$,
				\begin{align*}
					\lambda_1&=\lambda_2=\dfrac{H\pm \sqrt{H^2+8C}}{4}\\ \lambda_3&=
					\dfrac{1}{4}\left(H\mp\sqrt{H^2+8C}+D\right)\\ 
					\lambda_4&=
					\dfrac{1}{4}\left(H\mp\sqrt{H^2+8C}-D\right),
				\end{align*}
				where $D:=\sqrt{8S-4H^2-16C}$.
				This immediately shows that, if $m=3$ on $M$, then $M$ is isoparametric: however, this is impossible, by Cartan's classification of isoparametric hypersurfaces with $m=3$. Hence there exists $p\in M$ such that $m=2$: however, by the previous computations, since we are assuming that $w=2$ (i.e., $\operatorname{W}$ does not vanish at any point of $M$), we would obtain that $D=0$ at $p\in M$, i.e.
				\[
				8S-4H^2-16C=0,
				\]
				which holds, in fact, on $M$, since all the quantities involved are constant on $M$. This line of reasoning allows us to show that $m=2$ on $M$: since $S$ is constant, this immediately implies that $M$ is isoparametric. 
				
				Finally, let $w=3$, which means that $m=4$ on $M$, i.e. that $\lambda_i\neq\lambda_j$
				for $i\neq j$ at every $p\in M$: since
				the eigenvalues of $A$ are simple, given $p\in M$ the second fundamental form can be diagonalized in an open neighborhood $U_p$ of $p$.
				By the
				fact that $H^2$ and $S$ are constant functions, we can differentiate 
				\eqref{weyleigen} to get
				\[
				0=dW_{ijij}^+=\dfrac{1}{4}
				d(\lambda_i+\lambda_j)\left[
				2(\lambda_i+\lambda_j)-
				H\right].
				\]
				Let us assume that
				$d(\lambda_i+\lambda_j)\neq 0$ at $p$
				for some $i\neq j$:
				then, by local smoothness of the 
				principal curvatures, this holds
				in an open neighborhood $U'_p\subset U_p$.  
				However, this means that
				\[
				\lambda_i+\lambda_j=\dfrac{H}{2} \quad\mbox{in } U'_p,
				\]
				which is impossible, since $H$ is
				constant. Hence, at $p$, 
				\[
				d(\lambda_i+\lambda_j)=0
				\]
				for every $i,j=1,..,4$, $i\neq j$.
				Now, by the fact that $H$ is constant,
				we obtain
				\[
				0=d(\lambda_1+\lambda_2)=
				d(\lambda_1+\lambda_3) \Longrightarrow
				0=d(2\lambda_1+\lambda_2+\lambda_3)
				\Longrightarrow 0=d(\lambda_1-\lambda_4);
				\]
				since $d(\lambda_1+\lambda_4)=0$, 
				we conclude that $d\lambda_1=d\lambda_4=0$ and, hence,
				$d\lambda_i=0$ for every $i$. 
			\end{enumerate}
		\end{proof}

		If $\iota:(M,g)\longrightarrow (N,g_N)$ is a minimal isometric immersion, we have the following expressions for \eqref{weylconst}, \eqref{weylplusminushyper} and \eqref{weylplusminusnormhyper}:
		
		\begin{align} \label{weylminimal}
			W_{ijkl}&=A_{ik}A_{jl}-A_{il}A_{jk}+
			\dfrac{1}{2}(A_{ik}^2\delta_{jl}-A_{il}^2\delta_{jk}
			+A_{jl}^2\delta_{ik}-A_{jk}^2\delta_{il})-\dfrac{1}{6}S
			(\delta_{ik}\delta_{jl}-\delta_{il}\delta_{jk});
		\end{align}
		\begin{align} \label{weylplusminusminimal}
			W_{ijkl}^{\pm}&=\dfrac{1}{2}(A_{ik}A_{jl}-A_{il}A_{jk}\pm
			\mu_{ijrs}A_{rk}A_{sl})+\\
			&+\dfrac{1}{4}(A_{ik}^2\delta_{jl}-A_{il}^2\delta_{jk}+
			A_{jl}^2\delta_{ik}-A_{jk}^2\delta_{il}\pm\mu_{ijrl}A_{rk}^2\mp\mu_{ijrk}A_{rl}^2)+\notag\\
			&-\dfrac{1}{12}S(\delta_{ik}\delta_{jl}-
			\delta_{il}\delta_{jk}\pm\mu_{ijkl}); \notag
		\end{align}
		\begin{equation} \label{weylplusminusnormminimal}
			\lvert\operatorname{W}^{\pm}\rvert^2=
			\dfrac{7}{6}S^2
			-2\lvert A^2\rvert^2.
		\end{equation}
		The Weyl tensor can also be written by means of the so-called \emph{Fialkow tensor}, defined as (see e.g \cite{case2023sharp, Fialkow1944})
		\begin{equation} \label{fialkow}
			\operatorname{F}=\dfrac{1}{2}\left(A^2-\dfrac{1}{6}Sg\right):
		\end{equation}
		by \eqref{fialkow}, it is possible to show that
		\begin{equation} \label{weylfialkowglob}
			\operatorname{W}=\dfrac{1}{2}A\KN A + \operatorname{F}\KN  g,
		\end{equation}
		where $\KN$ is the Kulkarni-Nomizu product, which, given two $(0,2)$-tensor fields $U$ and $T$, is locally defined as
		\begin{equation}
			(U\KN T)_{ijkl}= U_{ik}T_{jl}+U_{jl}T_{ik}-U_{il}T_{jk}-U_{jk}T_{il}.
		\end{equation}
		Locally, by \eqref{fialkow} we
		can rewrite \eqref{weylminimal} as
		\begin{equation} \label{weylfialkow}
			W_{ijkl}=A_{ik}A_{jl}-A_{il}A_{jk}+
			F_{ik}\delta_{jl}-F_{il}\delta_{jk}+F_{jl}\delta_{ik}-F_{jk}\delta_{il}
		\end{equation}
		and, consequently, \eqref{weylplusminusminimal} as
		\begin{equation}
			W_{ijkl}^{\pm}=\dfrac{1}{2}
			(A_{ik}A_{jl}-A_{il}A_{jk}+
			F_{ik}\delta_{jl}-F_{il}\delta_{jk}+F_{jl}\delta_{ik}-F_{jk}\delta_{il})+\dfrac{1}{2}\mu_{ijrs}
			(A_{rk}A_{sl}+
			F_{rk}\delta_{sl}+F_{sl}\delta_{rk}).
		\end{equation}
		By the formulas 
		\eqref{weylnormhyper} and \eqref{riccisquare}, we can
		prove two sharp inequalities which will be of crucial importance throughout the paper: namely, it is immediate show the following
		\begin{proposition}
			Let $(M^4,g)$ be a minimal, isometrically immersed hypersurface in a space form $(N^5(c),g_N)$. Then, at every $p\in M$, we have 
			\begin{equation} \label{ineq}
				\dfrac{1}{4}S^2
				\leq \lvert A^2\rvert^2\leq \dfrac{7}{12}S^2;
			\end{equation}
			moreover, at $p$, equality holds in the left-hand side if and only if $\mathring{\operatorname{Ric}}\equiv 0$ and equality holds in the right-hand side if and only if $\operatorname{W}\equiv 0$.
		\end{proposition}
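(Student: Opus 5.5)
The plan is to read both inequalities off from the nonnegativity of two squared norms, specializing the formulas already derived to $H=0$. For the left-hand inequality I will set $H=0$ in \eqref{riccisquare}, which gives
\[
\lvert\mathring{\operatorname{Ric}}\rvert^2=\lvert A^2\rvert^2-\dfrac{1}{4}S^2 .
\]
Since the left side is a squared norm, $\lvert A^2\rvert^2\geq\frac14 S^2$ at every point. Equality at $p$ holds exactly when $\mathring{\operatorname{Ric}}$ vanishes at $p$.

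For the right-hand inequality I will use \eqref{weylnormhyper} with $H=0$, or equivalently twice \eqref{weylplusminusnormminimal}:
\[
\lvert\operatorname{W}\rvert^2=\dfrac{7}{3}S^2-4\lvert A^2\rvert^2 .
\]
Nonnegativity of the left side yields $\lvert A^2\rvert^2\leq\frac{7}{12}S^2$. Equality at $p$ holds if and only if $\operatorname{W}=0$ at $p$, because $\lvert\operatorname{W}\rvert^2=0$ forces the tensor itself to vanish there.

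There is no real obstacle: all the work sits in the long computations behind \eqref{weylplusminusnormhyper} and \eqref{riccisquare}, which I am taking as given. As a sanity check, I will verify the bounds in a diagonal frame with $\lambda_i$ summing to zero. The left bound is the Cauchy--Schwarz inequality $\sum\lambda_i^4\geq\frac14(\sum\lambda_i^2)^2$, with equality when all $\lambda_i^2$ are equal. The right bound is sharp when three principal curvatures coincide, consistent with Proposition \ref{nishi}. For example, $\lambda=(1,1,1,-3)$ gives $S=12$ and $\lvert A^2\rvert^2=84=\frac{7}{12}\cdot144$.

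Note that orientability is not needed for $\lvert\operatorname{W}\rvert^2$, since \eqref{weylnormhyper} follows directly from \eqref{weylconst}.
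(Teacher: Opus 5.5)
Your proof is correct and follows the paper's argument: the paper also obtains both inequalities and their pointwise equality cases from the nonnegativity of $\lvert\mathring{\operatorname{Ric}}\rvert^2$ in \eqref{riccisquare} and of $\lvert\operatorname{W}\rvert^2$ in \eqref{weylnormhyper}, specialized to $H=0$. Your side remark is also valid: orientability is not needed, since $\lvert\operatorname{W}\rvert^2$ can be computed directly from \eqref{weylconst}.
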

		\begin{rem} \label{inequalities}
			The right-hand side of \eqref{ineq} is a particular case of a more general result proven in \cite{case2023sharp}, which states that, given a symmetric, trace-free
			endomorphism $A$ of a $n$-dimensional inner product space, then 
			\begin{equation*} 
				\dfrac{n^2-3n+3}{n(n-1)}S^2\geq \lvert A^2\rvert^2,
			\end{equation*}
			with equality if and only if $A$ admits an eigenspace of dimension $n-1$. In fact, if $A$ is the second fundamental form of a minimal immersion into a $(n+1)$-dimensional space form, then the squared norm of the Weyl tensor of the induced metric on the hypersurface is given by
			\begin{equation} \label{weylminimalgeneral}
				\lvert\operatorname{W}\rvert^2=\dfrac{2(n^2-3n+3)}{(n-1)(n-2)}S^2-\dfrac{2n}{n-2}\lvert A^2\rvert^2,
			\end{equation}
			which is non-negative and, therefore, yields
			\begin{equation} \label{weylminimalgeneral2}
				\dfrac{n^2-3n+3}{n(n-1)}S^2-\lvert A^2\rvert^2\geq 0.
			\end{equation}
			On the other hand, in higher dimensions, the left-hand side of \eqref{ineq} becomes
			\begin{equation} \label{tracriccmingen}
				\lvert A^2\rvert^2\geq \dfrac{1}{n}S^2;
			\end{equation}
			we also point out that, since \eqref{tracriccmingen} is obtained by applying the Cauchy-Schwarz inequality and $A$ is a symmetric tensor, equality holds at $p\in M$ if and only if the principal curvatures 
			$\lambda_i$ satisfy
			\[
			\lambda_i^2=\lambda_j^2, \quad
			\mbox{for every } i,j,
			\]
			which means that, if $n$ is odd, $A\equiv 0$ at $p$, while, if $n$ is even, either $A\equiv 0$ or there are exactly two distinct principal curvatures $\lambda_1$ and $\lambda_2$ at $p$, both with multiplicity $n/2$, such that $\lambda_1=-\lambda_2$. If the latter case holds at every point $p$, then by \eqref{ricconst} we conclude that $(M^n,g)$ is a non-totally geodesic Einstein hypersurface (see \cite{Fialkow1938, lawson, ryan} for classification results).
		\end{rem}
		
		We end this section with a useful Proposition, which will be used throughout the paper. As said before, the classification of minimal hypersurfaces with $S\equiv n$ in $\mathbb{S}^{n+1}$ was completed by Chern, do Carmo, Kobayashi \cite{cherndocarmokob} and Lawson \cite{lawson}: here, we include a local version of this result, considering a more general space form with positive curvature as the ambient space. Namely, we prove the following
		
		\begin{proposition} \label{localisometryprop}
			Let $(M^n,g)$ be a closed, minimally immersed hypersurface in a space form $(N^{n+1}(1),g_N)$. If $S\equiv n$, then $(M^n,g)$ is locally isometric to one of the Clifford hypersurfaces
			\[
			\mathbb{S}^k\left(\sqrt{\dfrac{k}{n}}\right)\times\mathbb{S}^{n-k} \left(\sqrt{\dfrac{n-k}{n}}\right),
			\]
			endowed with the standard product metric, for some $k=1,...,\lfloor n/2\rfloor$ 
			(here $\mathbb{S}^N(r)$ indicates the standard $N$-dimensional sphere of radius $r$). 
		\end{proposition}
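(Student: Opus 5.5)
The plan is to run the classical Chern–do Carmo–Kobayashi argument pointwise, using only local information.

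\emph{Step 1: $A$ is parallel.} Since $S\equiv n$ and $c=1$, Simons' identity \eqref{simonsid} becomes $0=\lvert\nabla A\rvert^2$, so $\nabla A\equiv 0$ on $M$. Note that this needs only the pointwise identity, not integration. Consequently the principal curvatures are constant, their multiplicities are constant, and the eigendistributions of $A$ are parallel. Minimality and $S=n>0$ rule out a single eigenvalue, so there are at least two distinct ones.

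\emph{Step 2: at most two principal curvatures.} Commute covariant derivatives of the parallel tensor $A$. The Ricci identity gives
\[
0=A_{ij,kl}-A_{ij,lk}=A_{pj}R_{pikl}+A_{ip}R_{pjkl}.
\]
Work in a frame diagonalizing $A$ and take $i=k$, $j=l$ with $i\neq j$. Inserting \eqref{riemconst} with $c=1$ yields
\[
(\lambda_i-\lambda_j)(1+\lambda_i\lambda_j)=0 .
\]
Hence any two distinct principal curvatures satisfy $\lambda_i\lambda_j=-1$. Three distinct values $a,b,d$ would give $ab=ad=-1$, hence $b=d$, a contradiction. So there are exactly two distinct principal curvatures $\lambda,\mu$, with $\lambda\mu=-1$ and multiplicities $k$ and $n-k$.

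\emph{Step 3: identify the values.} Minimality gives $k\lambda+(n-k)\mu=0$. Together with $\lambda\mu=-1$ this yields
\[
\lambda^2=\frac{n-k}{k},\qquad \mu^2=\frac{k}{n-k},
\]
and then $S=k\lambda^2+(n-k)\mu^2=n$ holds automatically, which is consistent. Swapping the roles of $\lambda$ and $\mu$ if needed, we may assume $k\le\lfloor n/2\rfloor$.

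\emph{Step 4: local product structure.} The two eigendistributions $E_\lambda$ and $E_\mu$ are parallel, because $\nabla A=0$. They are therefore integrable with totally geodesic leaves, and by the local de Rham decomposition $M$ is locally a Riemannian product $M_1^k\times M_2^{n-k}$. Gauss' equation \eqref{riemconst} gives the sectional curvatures: on $E_\lambda$ they equal $1+\lambda^2=n/k$, on $E_\mu$ they equal $n/(n-k)$, and mixed planes have curvature $1+\lambda\mu=0$, consistent with a product. Factors of dimension $\ge 2$ are thus locally isometric to spheres of radii $\sqrt{k/n}$ and $\sqrt{(n-k)/n}$; a one-dimensional factor is trivially locally isometric to $\mathbb{S}^1(\sqrt{1/n})$. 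This proves the local isometry. No global input is needed, which is why the ambient space may be any $N^{n+1}(1)$.

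The main obstacle is Step 2, specifically getting the sign and index conventions right in the commutation formula. The alternative of quoting Lawson's algebraic argument is less self-contained. Step 4 is standard once parallelism of $A$ is established.
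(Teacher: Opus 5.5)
Your argument is correct, but it reaches $m=2$ by a different route than the paper.

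\textbf{The paper's route.} It uses the integrated identity \eqref{simonsidint}, and hence closedness, to get $\nabla A\equiv 0$. It then appeals to M\"unzner's theorem: a minimal isoparametric hypersurface with $m$ distinct principal curvatures has $S=(m-1)n$, so $S=n$ forces $m=2$. The principal curvatures are then solved from minimality and $S=n$. The splitting is done exactly as in your Step 4, via Eisenhart/de Rham plus the Gauss equation.

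\textbf{Your route.} You take $\nabla A=0$ from the pointwise identity \eqref{simonsid}. You then apply the Ricci identity to the parallel tensor $A$, obtaining $(\lambda_i-\lambda_j)(1+\lambda_i\lambda_j)=0$. So any two distinct principal curvatures satisfy $\lambda\mu=-1$, and there can be at most two. Minimality then determines them, and $S=n$ comes back as a consistency check. Your sign worry in Step 2 is harmless: the left-hand side vanishes, so only $R_{jiij}=-R_{ijij}$ and $R_{ijij}=1+\lambda_i\lambda_j$ enter.

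\textbf{What each approach buys.} Your argument is completely local and elementary. It never uses that $M$ is closed. It also avoids importing M\"unzner's theory, which is developed for isoparametric hypersurfaces of the round $\mathbb{S}^{n+1}$; applying it in an arbitrary $N^{n+1}(1)$ needs at least a lifting or local-extension remark that the paper does not give. Your argument also shows that mixed sectional curvatures equal $1+\lambda\mu=0$, consistent with the product. It treats $k=1$ explicitly, where ``constant sectional curvature $n/k$'' is vacuous. The paper's version is shorter only because it hands the algebra to a deep global result. Yours is essentially the classical Chern--do Carmo--Kobayashi/Lawson computation and fits the local nature of the conclusion better.
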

		\begin{proof}
			Since $S\equiv n$ on $M$, by \eqref{simonsidint} we immediately obtain that the second fundamental form of the immersion is parallel,
			i.e. $\nabla A\equiv 0$ on $M$: this also implies that $M$ must be isoparametric. By a result due to M\"{u}nzner
			\cite{munzner} (also mentioned in the Introduction), recalling that $m$ is the number of distinct principal curvatures, we have that
			\[
			n=S=(m-1)n \Longrightarrow 
			m=2.
			\]
			Hence, there are two distinct principal curvatures $\lambda_1$ and $\lambda_2$ such that $\lambda_1$ has multiplicity $k$ and $\lambda_2$ has multiplicity $n-k$: since $S\equiv n$, $M$ is not totally geodesic, hence $k\geq 1$. 
			By the fact that $M$ is minimal and $S\equiv n$, we easily obtain that
			\[
			\lambda_1=\pm\sqrt{\dfrac{n-k}{k}}, \quad \mbox{ and } \quad 
			\lambda_2=\mp\sqrt{\dfrac{k}{n-k}}.
			\]
			
			Now, since $S\neq 0$, $A$ is not proportional to $g$: therefore, since $A$ is a parallel, symmetric $(0,2)$-tensor, we can apply a splitting result due to Eisenhart \cite{eisen} (or, equivalently, an analogous of the De Rham's splitting Theorem for symmetric, parallel $(0,2)$-tensors, see e.g. \cite{boubel}) to conclude that 
			$(M^n,g)$ must be locally isometric to a Riemannian product, where the dimension of the factors is induced by the dimension of the eigenspaces of $A$. Since $m=2$, we immediately obtain that $M$ is locally isometric to $(M_1^k\times M_2^{n-k},g_1\oplus g_2)$, where $(M_1^k,g_1)$ (resp. $(M_2^{n-k},g_2))$ is a Riemannian manifold of dimension $k$ (resp. of dimension $n-k$): moreover, if we use the index notation $p,q,r,...=1,...,k$ and $a,b,c,...=k+1,...,n$, by recalling the local expression of the curvature of a product metric and using \eqref{riemconst}, with respect to a frame that diagonalizes $A$ we obtain
			\begin{align*}
				{}^{M_1}R_{pqrs}&=R_{pqrs}=
				(\delta_{pr}\delta_{qs}-\delta_{ps}\delta_{qr})+A_{pr}A_{qs}-A_{ps}A_{qr}=
				\dfrac{n}{k}(\delta_{pr}\delta_{qs}-\delta_{ps}\delta_{qr})\\
				{}^{M_2}R_{abcd}&=R_{abcd}=
				(\delta_{ac}\delta_{bd}-\delta_{ad}\delta_{bc})+A_{ac}A_{bd}-A_{ad}A_{bc}=
				\dfrac{n}{n-k}(\delta_{ac}\delta_{bd}-\delta_{ad}\delta_{bc}),
			\end{align*}
			where ${}^{M_i}R_{\alpha\beta\gamma\delta}$ denote the components of the Riemann curvature tensor of $M_i$. Therefore, $M_1$ (resp. $M_2$) is a space form of dimension $k$ (resp. $n-k$) with constant sectional curvature equal to $n/k$ (resp. $n/(n-k)$): since positively curved space forms are quotients by actions of the isometry group of the sphere endowed with the standard metric, the claim follows by recalling that the quotient maps are local isometries. 
		\end{proof}
		
		Since we mainly work with four-dimensional hypersurfaces, by Proposition \ref{nishi} and the discussion at the end of Remark \ref{inequalities}, we can also write the following
		\begin{cor} \label{localisometrycor}
			Let $(M^4,g)$ be a closed, minimally immersed hypersurface in a space form $(N^{5}(1),g_N)$. If $S\equiv 4$ on $M$, then one of the following holds:
			\begin{enumerate}
				\item $(M^4,g)$ is locally isometric to \\
				$\mathbb{S}^1(1/\sqrt{2})\times\mathbb{S}^3(\sqrt{3}/2)$
				and hence locally conformally flat.
				\item $(M^4,g)$ is locally isometric to $\mathbb{S}^2(1/\sqrt{2})\times\mathbb{S}^2(1/\sqrt{2})$
				and hence Einstein.
			\end{enumerate}
		\end{cor}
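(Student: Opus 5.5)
The plan is to reduce the statement to Proposition \ref{localisometryprop} and then read off the curvature properties of each of the two resulting models from the principal curvatures, using Proposition \ref{nishi} and the equality case discussed in Remark \ref{inequalities}.

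\textbf{Step 1: the two models.} Since $S\equiv 4=n$, Proposition \ref{localisometryprop} with $n=4$ applies. It shows that $(M^4,g)$ is locally isometric to a Clifford hypersurface
\[
\mathbb{S}^k\left(\sqrt{k/4}\right)\times\mathbb{S}^{4-k}\left(\sqrt{(4-k)/4}\right), \qquad k\in\{1,2\}.
\]
The proof of that proposition also gives more. The immersion is isoparametric with $m=2$, the principal curvature $\lambda_1$ has multiplicity $k$ and $\lambda_2$ has multiplicity $4-k$, and
\[
\lambda_1=\pm\sqrt{\tfrac{4-k}{k}}, \qquad \lambda_2=\mp\sqrt{\tfrac{k}{4-k}}.
\]

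\textbf{Step 2: the case $k=1$.} Here the radii are $\sqrt{1/4}=1/2$ and $\sqrt{3}/2$. The model is therefore the product $\mathbb{S}^1(1/2)\times\mathbb{S}^3(\sqrt3/2)$ from Theorem \ref{topbound}; the radius $1/\sqrt2$ in item (1) of the statement should be read as $1/2$. In this case three of the principal curvatures coincide at every point. Proposition \ref{nishi} then gives $\operatorname{W}^+\equiv0$ and $\operatorname{W}^-\equiv0$, so $\operatorname{W}\equiv0$ by \eqref{weyldecomp}, i.e. $g$ is locally conformally flat. Alternatively, since $A$ has a $3$-dimensional eigenspace, the equality case of Remark \ref{inequalities} gives $\lvert A^2\rvert^2=\frac{7}{12}S^2$, and \eqref{weylnormhyper} with $H=0$ again yields $\operatorname{W}\equiv0$.

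\textbf{Step 3: the case $k=2$.} Here $\lambda_1=\pm1$ and $\lambda_2=\mp1$, each with multiplicity two, so $\lambda_1=-\lambda_2$. By the discussion at the end of Remark \ref{inequalities}, $\lvert A^2\rvert^2=\frac14 S^2$. The equality case in \eqref{ineq}, or directly \eqref{riccisquare} with $H=0$, then gives $\mathring{\operatorname{Ric}}\equiv0$. Concretely, $A^2=g$ and \eqref{ricconst} gives $\operatorname{Ric}=(3-1)g=2g$, so $g$ is Einstein. The model is $\mathbb{S}^2(1/\sqrt2)\times\mathbb{S}^2(1/\sqrt2)$.

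There is no genuine obstacle: all the analytic content is already in Proposition \ref{localisometryprop}. The only care needed is matching the radii for $k=1$, which come out as $1/2$ and $\sqrt3/2$.
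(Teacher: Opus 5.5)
Your proof is correct and follows the paper's argument. Proposition \ref{localisometryprop} gives the two local models; for $k=1$, three equal principal curvatures together with Proposition \ref{nishi} give local conformal flatness; for $k=2$, the case $\lambda_1=-\lambda_2$ with both multiplicities two, via Remark \ref{inequalities}, gives the Einstein condition. Your radius correction matches the rest of the paper, but it is not strictly needed: circles of any radius are locally isometric, so $\mathbb{S}^1(1/\sqrt{2})\times\mathbb{S}^3(\sqrt{3}/2)$ is locally isometric to $\mathbb{S}^1(1/2)\times\mathbb{S}^3(\sqrt{3}/2)$ anyway.
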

		\begin{proof}
			By Proposition \ref{localisometryprop}, we immediately obtain the local isometry.  
			\begin{enumerate}
				\item If $(M^4,g)$ is locally isometric to $\mathbb{S}^1\times\mathbb{S}^3$, then by the formulas for the principal curvatures appearing in the proof of Proposition \ref{localisometryprop}, we obtain that there are three equal principal curvatures at every point of $M$: by Proposition \ref{nishi}, this implies the local conformal flatness.
				\item Analogously, if $(M^4,g)$ is locally isometric to $\mathbb{S}^2\times\mathbb{S}^2$,
				there are exactly two distinct principal curvatures, both with multiplicity two, at every point of $M$: by Remark \ref{inequalities}, the claim is proven. 
			\end{enumerate}
		\end{proof}
		
		\begin{rem}
			Following Lawson \cite{lawson}, if $N^{n+1}$ is the standard sphere $\mathbb{S}^{n+1}(1)$, since we are assuming that the hypersurface is closed, the isometry must be global.
		\end{rem}
		We point out that, if the ambient space 
		$(N^5,g_n)$ is just locally conformally flat, for every point $p\in M\subset N$ there exist an open neighborhood $p\in U\subset N$ and a smooth function $u\in C^{\infty}(U)$ such that $(U,e^{2u}g_N)$ is a flat open submanifold of $N$: by restricting the conformal factor $u$ to a smooth function $\bar{u}$ on
		$M\cap U$, we obtain that, locally, the induced metric on $M\cap U$ is $\bar{g}:=e^{2\bar{u}}g$. Therefore, since a local Darboux frame for $\bar{g}$ is the rescaling of the one chosen for $g$, we obtain that the local components of the (anti-)self-dual Weyl tensor of $\bar{g}$ are expressed by the formula \eqref{weylplusminushyper}, where all the geometric quantities are now with respect to $\bar{g}$. Furthermore,
		the number $m$ of distinct eigenvalues of $A$ is invariant
		under conformal changes of the ambient metric, since the
		traceless second fundamental form $\mathring{A}=A-(H/n)g$ is
		conformally covariant, i.e., 
		\[
		\overline{\mathring{A}}=e^{\bar{u}}\mathring{A},
		\]
		where $\overline{\mathring{A}}$ is the traceless second fundamental form with respect to $\bar{g}$.
		Hence, we can perform some of the previous local computations regarding the Weyl tensor and then exploit its conformal properties to extend some of our results to the case where the ambient space is only locally conformally flat instead of a space form. More precisely, we have the following
		\begin{meta} \label{metathm}
			Theorem \ref{equalweylnorms}, Corollary \ref{halfconfmin}, Corollary \ref{signaturehyper}, Corollary \ref{eulercor}, Proposition \ref{nishi} and [Theorem \ref{eigentheorem},\ref{firststateWeyl}] hold if $(N^5,g_N)$ is a locally conformally flat manifold (not necessarily a space form). 
			In particular, Theorem \ref{equalweylnormslcf}, Corollary \ref{signaturehyperlcf} and Theorem \ref{eigentheoremlcf} hold. 
		\end{meta}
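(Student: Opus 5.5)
The plan is to reduce every local statement in the Meta-Theorem to the space form case. The tools are the conformal invariance of the $(1,3)$ Weyl tensor of $(M,g)$ and the conformal covariance of the traceless second fundamental form. The global corollaries will then follow from their local counterparts exactly as in the space form case.

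Fix $p\in M$. Since $(N^5,g_N)$ is locally conformally flat, there are a neighbourhood $U\ni p$ in $N$ and $u\in C^\infty(U)$ such that $(U,e^{2u}g_N)$ is flat, i.e. an open subset of the space form $N^5(0)$. The restriction of the immersion to $M\cap U$ is then an isometric immersion of $(M\cap U,\bar g)$, with $\bar g=e^{2\bar u}g$ and $\bar u=u|_{M\cap U}$, into a flat space. All the pointwise computations of Section \ref{prelim} are local in nature, since they only use the Gauss equation \eqref{riemconst} at a point. They therefore apply verbatim to $\bar g$, with $c=0$. In particular, \eqref{weylplusminusnormhyper} and \eqref{weylnormhyper} hold for $\bar g$, which gives $\lvert\overline{\operatorname{W}}^+\rvert_{\bar g}^2=\lvert\overline{\operatorname{W}}^-\rvert_{\bar g}^2$ at every point of $M\cap U$.

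To transfer this back to $g$, I use two facts. First, the $(1,3)$ Weyl tensor is conformally invariant, so the $(0,4)$ tensors satisfy $\overline{\operatorname{W}}=e^{2\bar u}\operatorname{W}$. Second, the Hodge star on $\Lambda^2$ is conformally invariant in dimension four, so the splitting $\Lambda^2=\Lambda_+\oplus\Lambda_-$ is the same for $g$ and $\bar g$, and $\overline{\operatorname{W}}^\pm=e^{2\bar u}\operatorname{W}^\pm$. Hence $\lvert\overline{\operatorname{W}}^\pm\rvert_{\bar g}^2=e^{-4\bar u}\lvert\operatorname{W}^\pm\rvert_g^2$, and the equality of norms passes to $g$ at $p$. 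This proves Theorem \ref{equalweylnorms} in the locally conformally flat setting, i.e. Theorem \ref{equalweylnormslcf}. Corollary \ref{halfconfmin} follows at once from it.

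For Corollaries \ref{signaturehyper} and \ref{eulercor} the argument is global but uses only the pointwise identity just obtained. Hirzebruch's formula \eqref{hirze} is valid for the intrinsic metric $g$ on the closed manifold $M$, so $\tau(M)=0$. The parity of $\chi(M)$ then follows from \eqref{eulerclosed} exactly as in the proof of Corollary \ref{eulercor}. This gives Corollary \ref{signaturehyperlcf}.

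For Proposition \ref{nishi} and part (\ref{firststateWeyl}) of Theorem \ref{eigentheorem}, I argue as follows. The eigenvalues of $\mathcal{W}^\pm_{\bar g}$ equal $e^{-2\bar u}$ times those of $\mathcal{W}^\pm_g$; this follows by raising indices with $\bar g$ in a $\bar g$-orthonormal frame, which is just a rescaled $g$-orthonormal frame. Consequently the number $w$ of distinct eigenvalues, and the vanishing of $\operatorname{W}^\pm$, are conformally invariant. On the other side, the statements about principal curvatures concern only the multiplicity pattern of the eigenvalues of $A$. Since $\overline{\mathring A}=e^{\bar u}\mathring A$, the multiplicity structure of $\mathring A$, and hence of $A=\mathring A+\frac Hn g$, is the same for both metrics. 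Applying the space form results to $(M\cap U,\bar g)$ in the flat ambient space and translating both sides therefore gives the statements for $g$. Together with the previous paragraphs, this yields Theorem \ref{eigentheoremlcf}, whose part (2) is already assumed in a space form.

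The main obstacle I foresee is checking carefully that the conformal weights match when passing between the $(0,4)$ tensor, the operator on $\Lambda^2$, and norms computed with different metrics. The key point is that only ratios and vanishing matter, so positive conformal factors are harmless. A secondary point to verify is the transformation $\overline{\mathring A}=e^{\bar u}\mathring A$, using the unit normal rescaled by $e^{-u}$. This is a standard formula, and I would take it as given.
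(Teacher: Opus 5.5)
Your proposal is correct and follows essentially the same route as the paper. Both arguments locally conformally flatten the ambient metric and apply the space-form computations to $\bar g=e^{2\bar u}g$ with $c=0$. Both then transfer the conclusions back to $g$ using the conformal invariance of the Weyl tensor and the covariance $\overline{\mathring A}=e^{\bar u}\mathring A$. You also supply details the paper leaves implicit, namely the conformal invariance of $\star$ on $\Lambda^2$ in dimension four and the $e^{-2\bar u}$ scaling of the eigenvalues of $\mathcal{W}^\pm$.
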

		\begin{rem} \label{remavez}
			Taking into account Remark \ref{pontrjagin}, we recall that Avez showed that every smooth manifold admitting a locally conformally flat metric has vanishing
			Pontrjagin classes \cite{avez}, which means that every hypersurface of such manifold has zero signature and this is an alternative proof of the fact that Corollary \ref{signaturehyperlcf} holds. However, in Theorem \ref{equalweylnormslcf} we show that $\lvert\operatorname{W}^+\rvert=\lvert\operatorname{W}^-\rvert$: to the best of our knowledge, the vanishing of the signature does not guarantee, \emph{a priori}, the existence of such a metric, while, on the other hand, every compact four-manifold admits a Riemannian metric such that $\lvert\operatorname{W}^++t\operatorname{W}^-\rvert\neq 0$ everywhere for every $t\in\mathbb{R}$ \cite{cdmaubin}.  
		\end{rem}

		\section{Topological bounds for the second fundamental form} \label{topbounds}
		The main aim of this section is to
		obtain sharp topological results involving the Euler characteristic of four-dimensional minimal hypersurfaces in space forms: in particular, we are interested in integral inequalities for the Euler characteristic in terms of the Weyl tensor and the squared norm of the second fundamental form $S$. Consequently, we obtain a lower bound for $S$ in terms of the Euler characteristic, possibly relating our analysis to the weak version of the Chern conjecture in a particular case. 
		
		First, we recall that, given a closed Riemannian manifold $(M,g)$ of dimension $n$, the \emph{Yamabe constant} $Y(M,[g])$ of the conformal class $[g]$ is defined as the infimum of the normalized Einstein--Hilbert functional in $[g]$, i.e.
		\begin{equation*} 
			Y(M,[g])=\inf_{g'\in [g]}\mathrm{Vol}_{g'}(M)^{-\frac{n-2}{n}}\int_MR_{g'}\,dV_{g'}:
		\end{equation*} 
		We refer the reader to the Introduction for some historical background on the importance of this conformal invariant in geometric analysis. Here, we provide a topological result concerning minimal hypersurfaces in $5$-dimensional space forms such that the induced metrics have positive Yamabe constant (Theorem \ref{boundyamabe}). 
		\begin{rem}
			We highlight the fact that, if $c\in\{0,-1\}$, there exist no closed minimal hypersurfaces if the ambient space is simply connected: hence, in Theorem \ref{boundyamabe} and throughout the rest of the paper, when $c$ is allowed to be different from $1$, we will assume that the ambient space is a non simply connected space form with vanishing or negative constant sectional curvature.
		\end{rem}

		\begin{proof}[Proof of Theorem \ref{boundyamabe}]
			First, the previous Remark allows us to assume that the ambient space is not simply connected if $c\in\{0,-1\}$. By the definition of the Yamabe constant $Y(M^4,[g])$ it is clear that minimal hypersurfaces in space forms with $c\in\{0,-1\}$ cannot have positive Yamabe constant: indeed, by \eqref{scalconst}, such a minimal hypersurface would have non-positive scalar curvature, which means that the infimum of the Einstein--Hilbert action in the conformal class $[g]$ must be non-positive. Then, by a well-known result due to Chang, Gursky and Yang \cite{chang2003conformally}, if the Yamabe constant is positive and 
			\[
			\int_M\lvert\operatorname{W}\rvert^2dV_g<16\pi^2\chi(M),
			\]
			$M$ has to be diffeomorphic to $\mathbb{S}^4$ (the result by Chang, Gursky and Yang states that $M$ might also be diffeomorphic to $\mathbb{RP}^4$, but this cannot be the case in our setting, since
			$\mathbb{RP}^4$ is not orientable). Hence, we assume that
			\begin{equation} \label{lowerboundweyl}
				\int_M |\operatorname{W}|^2dV_g\geq 16\pi^2\chi(M):
			\end{equation}
			this implies that
			\begin{equation*}
				16\pi^2\chi(M)+4\int_M|A^2|^2dV_g\leq \frac{7}{3}\int_M S^2dV_g.
			\end{equation*}
			Using the left-hand side of \eqref{ineq},
			we obtain
			\begin{align*}
				16\pi^2\chi(M)+\int_M S^2dV_g\leq \frac{7}{3}\int_M S^2dV_g,
			\end{align*}
			that is, 
			\begin{equation*}
				16\pi^2\chi(M)\leq\frac{4}{3}\int S^2dV_g,
			\end{equation*}
			which gives 
			\begin{equation*}
				4\pi^2\chi(M)\leq\frac{1}{3}\int_M S^2dV_g.
			\end{equation*}
			If we assume that equality holds in the
			previous equation, by \eqref{ineq} we would get that $(M^4,g)$ is an Einstein manifold and that
			\begin{equation} \label{equalitygursky}
				\int_M\lvert\operatorname{W}\rvert^2dV_g=
				16\pi^2\chi(M),
			\end{equation}
			which is impossible: indeed, if this were the case, $(M^4,g)$ would have to be conformally equivalent to $\mathbb{CP}^2$ with the Fubini--Study metric \cite{chang2003conformally}, which has non-zero signature and the contradiction follows from Corollary \ref{signaturehyper}. Moreover, we observe that the other case described in \cite{chang2003conformally} cannot be realized, since $(M^4,g)$ would be locally conformally flat and, therefore, totally geodesic, contradicting the validity of \eqref{equalitygursky},
			since the only totally geodesic hypersurfaces of $N^5(1)$ are
			space forms with positive scalar curvature and, therefore, 
			positive Euler characteristic by \eqref{cherngaussbonnetminimal}.
			
			Now, using again \eqref{cherngaussbonnetminimal}
			and \eqref{ineq} we get
			\begin{align*}
				4\pi^2\chi(M)&=\int_M\left(\dfrac{3}{8}S^2-\dfrac{3}{4}\lvert A^2\rvert^2 +3-\dfrac{1}{2}S\right)dV_g\geq\\
				&\geq\int_M\left(-\dfrac{1}{16}S^2+3-S\right)dV_g=\dfrac{1}{16}\int_M
				(4-S)(12+S)dV_g.
			\end{align*}
			Finally, it is clear that the equality in the previous inequality holds if and only if $(M^4,g)$ is locally conformally flat and, by \eqref{lowerboundweyl}, we obtain that $\chi(M)\leq 0$.
		\end{proof}
		\begin{rem} \label{remarkeulerbound}
			We point out that \eqref{eulerbound}
			is only interesting when $S$ is not bounded above by $x$, where
			\[
			x:=\dfrac{12}{19}(2\sqrt{5}+1):
			\]
			however, if $S<x$ on $M$, then obviously, by Simons' identity \eqref{simonsid}, $(M^4,g)$ must be a totally geodesic space form in $(N^5(1),g_N)$, since $x<4$.
			
			Furthermore, observe that the left-hand side of \eqref{eulerbound} can be obtained without any hypothesis on 
			the sign of the Yamabe constant: in that case, equality would hold if and only if the hypersurface is locally conformally flat, without the conclusion on the sign of $\chi(M)$.
			However, if we impose that $S$ is constant, by Corollary \ref{localisometrycor} we immediately obtain that the hypersurface is locally isometric to
			$\mathbb{S}^1\times\mathbb{S}^3$ and, therefore, $\chi(M)=0$ by means of the equality in the left-hand side of \eqref{eulerbound}.
		\end{rem}
		
		In the context of minimal immersions into space forms,
		we can find sharp topological lower bounds for the Weyl functional,
		inspired by the work due to Gursky
		\cite{GurskyAnnals, Gurskyharmonicweyl} (namely, Theorem \ref{topbound}, Corollary \ref{rigiditytopbound}
		and Corollary \ref{corbound} in
		the Introduction): however,
		in this special case we do not need to impose any condition on
		the sign of the Yamabe constant of the conformal class and we do not
		need to require the existence of a self-dual harmonic form.
		
		\begin{proof}[Proof of Theorem \ref{topbound} and Corollary \ref{rigiditytopbound}] 
			First, if the Euler characteristic is negative, both claims are trivial, hence we might assume that $\chi(M)\geq 0$. Then, we observe that, if $(M^4,g)$ is locally conformally flat, \eqref{boundeucl} does not hold unless $\chi(M)<0$: moreover, if $c=1$, since the scalar curvature is constant, the hypersurface is locally conformally flat if and only if it is totally geodesic (and, hence, it does not satisfy \eqref{boundsphere} by \eqref{cherngaussbonnetminimal}), or it is locally isometric to $\mathbb{S}^1\times \mathbb{S}^3$ and hence equality in \eqref{boundsphere}
			holds (e.g. by the last comment of Remark \ref{remarkeulerbound}). Now, suppose that $(M^4,g)$ is not locally conformally flat: this also allows us to assume that $\chi(M)>0$, since, if $\chi(M)=0$, both \eqref{boundeucl} and \eqref{boundsphere} are trivial. We want to make use of the Chern--Gauss--Bonnet formula \eqref{cherngaussbonnet}. 
			Let $D>0$ such that
			\[
			\int_M\lvert\operatorname{W}\rvert^2dV_g= D\pi^2\chi(M);
			\]
			by the Chern--Gauss--Bonnet formula, we have
			\[
			\dfrac{32}{D}\int_M\lvert\operatorname{W}\rvert^2dV_g=
			\dfrac{32}{D}\cdot D\pi^2\chi(M)=
			\int_M\left(\lvert\operatorname{W}\rvert^2
			-2\lvert\mathring{\operatorname{Ric}}\rvert^2
			+\dfrac{R^2}{6}\right)dV_g,
			\]
			which, by \eqref{scalconst}, \eqref{weylnormhyper} and \eqref{riccisquare} 
			can be rewritten as 
			\[
			\left(\dfrac{32}{D} -1\right)
			\int_M\left(\dfrac{7}{3}S^2-
			4\lvert A^2\rvert^2\right)dV_g=
			\int_M\left(24c^2-4cS+
			\dfrac{2}{3}S^2-2\lvert A^2\rvert^2\right)dV_g,
			\]
			i.e.
			\begin{equation} \label{integrineq}
				\int_M\left(\dfrac{224-9D}{3D}S^2
				+\dfrac{6D-128}{D}\lvert A^2\rvert^2
				+4cS-24c^2\right)dV_g= 0.
			\end{equation}
			Now, if we assume that $c=0$, we obtain
			\[
			\int_M\left(\dfrac{224-9D}{3D}S^2
			+\dfrac{6D-128}{D}\lvert A^2\rvert^2\right)dV_g= 0;
			\]
			if $D\leq\frac{64}{3}$, we can use the right-hand side of \eqref{ineq}
			to conclude that
			\begin{equation} \label{inequalitynod1}
				\left[\dfrac{224-9D}{3D}+
				\dfrac{7(3D-64)}{6D}\right]\int_MS^2dV_g=
				\dfrac{1}{2}\int_MS^2dV_g\leq 0,
			\end{equation}
			which guarantees that $M$ is totally geodesic, but this is a
			contradiction with the fact that $M$ is not locally conformally flat.
			If $D>\frac{64}{3}$, we exploit the left-hand side of \eqref{ineq}
			in order to obtain
			\[
			\left[\dfrac{224-9D}{3D}+
			\dfrac{3D-64}{2D}\right]\int_MS^2dV_g=
			\dfrac{256-9D}{6D}\int_MS^2dV_g\leq 0:
			\]
			hence, if $D<\frac{256}{9}$, we get that $M$ is totally geodesic. 
			Note that, \emph{a priori}, 
			we cannot conclude anything from the latest
			inequality if $D=\frac{256}{9}$: however, in this case,
			i.e. if 
			\[
			\int_M\lvert\operatorname{W}\rvert^2dV_g=\dfrac{256}{9}\pi^2\chi(M),
			\]
			we have 
			\[
			\dfrac{1}{8}\int_M\lvert\operatorname{W}\rvert^2dV_g=
			\int_M\left(-2\lvert\mathring{\operatorname{Ric}\rvert^2}+\dfrac{R^2}{6}\right)dV_g,
			\]
			that is,
			\[
			\int_M\left(-\dfrac{3}{8}S^2+
			\dfrac{3}{2}\lvert A^2\rvert^2\right)dV_g=0,
			\]
			which, by \eqref{riccisquare}, implies that $(M,g)$ is 
			an Einstein manifold and, hence, totally geodesic, which
			is again a contradiction (see e.g. \cite[Theorem 7.1]{Fialkow1938} or
			\cite[Theorem 3.1]{ryan}; we point out that the author in \cite{Fialkow1938} uses a different convention for the Ricci tensor). 
			
			Now, let $c=1$. Since $M$ is not 
			locally conformally flat, we have that 
			$S$ is constant and non-zero:
			then, \eqref{integrineq} becomes
			\[
			\int_M\left(\dfrac{224-9D}{3D}S^2
			+\dfrac{6D-128}{D}\lvert A^2\rvert^2
			+4S-24\right)dV_g= 0.
			\]
			If $D<\frac{64}{3}$, we can use again the
			right-hand side of \eqref{ineq} to obtain
			\begin{equation} \label{inequalitynod2}
				\int_M\left(\dfrac{1}{2}S^2+4S
				-24\right)dV_g\leq 0
			\end{equation}
			and, since $S$ is constant, 
			\[
			(S+12)(S-4)\leq 0.
			\]
			Thus, we have that $S\leq 4$: hence, \eqref{simonsid} implies that
			either $M$ is totally geodesic,
			which is impossible,
			or $S\equiv 4$ (\cite{simons}). 
			However, in the latter
			case, since equality holds in \eqref{ineq}, we would obtain
			that $(M,g)$ is locally conformally flat, which contradicts
			our assumption: therefore, we can conclude that $D\geq\frac{64}{3}$,
			which means that \eqref{boundsphere} holds.
			
			Last, if $D=\frac{64}{3}$ (i.e. equality holds in \eqref{boundsphere}), the
			coefficient of $\lvert A^2\rvert^2$ in \eqref{integrineq} vanishes and,
			therefore,
			we have that
			\[
			\int_M\left(\dfrac{1}{2}S^2
			+4S-24\right)dV_g=0
			\]
			without using \eqref{ineq}: hence, following the same line of reasoning
			as before, we can conclude that $S\in\{0,4\}$.
			However, $M$ cannot be locally conformally flat by assumption:
			therefore, by Corollary \ref{localisometrycor}, we conclude that 
			$M$ has to be locally isometric to the Clifford hypersurface
			\[
			\mathbb{S}^2\left(\dfrac{1}{\sqrt{2}}\right)\times\mathbb{S}^2\left(\dfrac{1}{\sqrt{2}}\right).
			\]
			We can observe that Corollary \ref{rigiditytopbound} follows immediately from
			the conclusions drawn in Theorem \ref{topbound}.
		\end{proof}
		\begin{rem}
			We point out that, if $c=1$, by \eqref{weylequal} and Corollary \ref{signaturehyper}, the optimal constant in \eqref{boundsphere} is the same appearing in \cite{GurskyAnnals}, while, if $c=0$, the constant in \eqref{boundeucl} is higher. In view of the result in \cite{GurskyAnnals}, the result for $c=1$ might suggest that, if $S$ is constant, then $R$ must be non-negative. 
			For the sake of the reader, we emphasize that, in \cite{GurskyAnnals}, the notation $\lvert\operatorname{W}\rvert^2$ represents the norm of the endomorphism $\mathcal{W}$ and, therefore, it is $1/4$ of 
			our definition of the squared norm of $\operatorname{W}$.
			
			We also note that, by Theorem \ref{equalweylnorms}, it is possible to obtain analogous bounds for $||\operatorname{W}^{\pm}||_{L^2}^2$. 
		\end{rem}
		\begin{rem}
			\begin{itemize}
				\item If $N^5(1)$ is the standard sphere $\mathbb{S}^5$ of radius $1$,
				we can conclude that, in the constant scalar curvature case, either $M$ is globally isometric to an equatorial sphere $\mathbb{S}^4$ or \eqref{boundsphere} holds, with equality if and only if 
				$M$ is isometric to a Clifford hypersurface \cite{lawson};
				\item we highlight the fact that, if $D\leq\frac{64}{3}$, 
				the dependance on $D$ disappears in \eqref{inequalitynod1} and \eqref{inequalitynod2}, suggesting the 
				``Simons-type behaviour" of the results in Theorem 
				\ref{topbound}. 
			\end{itemize}
		\end{rem}
		\noindent
		By Theorem \ref{topbound}, we can also show the validity of the improved bound
		stated in Corollary \ref{corbound}, in the case of non-positive scalar curvature,
		which mirrors the topological lower bound found by Gursky in \cite{gurskyzeit},
		under the hypothesis of the existence of a non-trivial conformal vector field.
		Again, we point out that in \cite{gurskyzeit} the author adopts the
		same convention as in \cite{GurskyAnnals} for $\lvert\operatorname{W}\rvert^2$.
		\begin{proof}[Proof of Corollary \ref{corbound}]
			First, we note that $(M,g)$ cannot be locally conformally flat: indeed,
			since $R\leq 0$, the totally geodesic space forms and the 
			Clifford hypersurface $\mathbb{S}^1\times\mathbb{S}^3$ cannot enter the picture. 
			Assume, as in the proof of Theorem \ref{topbound}, that $\chi(M)>0$ (otherwise,
			the claim is trivial): furthermore, suppose that 
			\[
			\int_M\lvert\operatorname{W}\rvert^2dV_g<32\pi^2\chi(M).
			\]
			Let $D>0$ such that 
			\[
			\int_M\lvert\operatorname{W}\rvert^2dV_g=D\pi^2\chi(M):
			\]
			by Corollary \ref{rigiditytopbound}, we know that 
			$D>64/3$, since $R\leq 0$. 
			Since $c=1$, by \eqref{ineq} and \eqref{integrineq}, we obtain that
			\[
			\int_M\dfrac{256-9D}{6D}S^2+4S -6\leq 0;
			\]
			following the same argument used to prove \eqref{boundsphere},
			we obtain
			\[
			S\leq\dfrac{12}{256-9D}\left(2\sqrt{2D(32-D)}-D\right),
			\quad \mbox{if} \quad \dfrac{64}{3}<D<\dfrac{256}{9} 
			\]
			and
			\[
			S\leq\dfrac{12}{9D-256}\left(D-2\sqrt{2D(32-D)}\right),
			\quad \mbox{if} \quad \dfrac{256}{9}<D<32.
			\]
			Since, in both cases, $S>4$, a simple computation
			allows us to conclude that
			\[
			4<S<12 \quad \mbox{if} \quad 
			\dfrac{64}{3}<D<\dfrac{256}{9} \vee \dfrac{256}{9}<D<32.
			\]
			Furthermore, if $D=\frac{256}{9}$, it is immediate to observe
			that
			\[
			4<S \leq 6:
			\]
			however, this is impossible, because $R\leq 0$ implies $S\geq 12$ by \eqref{scalconst},
			and this concludes the proof.
		\end{proof}
		
		The sharp bound for the $L^2$-norm of the Weyl tensor found in
		Theorem \ref{topbound} allows us to find an optimal lower bound
		for $S$ in the case of a closed minimal hypersurface with constant scalar curvature in $(N^5(1),g_N)$ (Corollary \ref{corpinch}). 
		\begin{proof}[Proof of Corollary \ref{corpinch}]
			By \eqref{boundsphere} and \eqref{ineq}, we have
			\begin{align*}
				\dfrac{4}{3}S^2\mathrm{Vol}_g(M)&=\dfrac{7}{3}S^2\mathrm{Vol}_g(M)
				-S^2\mathrm{Vol}_g(M)\geq \dfrac{7}{3}S^2\mathrm{Vol}_g(M)-
				4\int_M\lvert A^2\rvert^2dV_g=\\
				&=\int_M\left(\dfrac{7}{3}S^2-4\lvert A^2\rvert^2\right)dV_g=
				\int_M\lvert\operatorname{W}\rvert^2dV_g\geq \dfrac{64}{3}\pi^2\chi(M),
			\end{align*}
			which immediately implies \eqref{secondformbound}. Furthermore, by \eqref{ineq}, the equality case is trivial, since $(M^4,g)$ cannot be totally geodesic.
		\end{proof}
		
		
		The result provided by Corollary \ref{corpinch} is related to the so-called \emph{second pinching problem}: as we mentioned in the Introduction, it is conjectured that, given a minimal hypersurface in $\mathbb{S}^{n+1}$ with constant scalar curvature, the set of possible values for the scalar curvature is discrete: in particular, if $S>n$, the conjecture asserts that $S\geq 2n$. By the work of Peng and Terng \cite{pengterng}, it is known that, if $S>n$, there exists a dimensional constant $C(n)$ such that $S\geq n+C(n)$: to the best of our knowledge, the best constant so far, which is $C(n)=n/3$, was obtained in \cite{yangcheng}, while the existence of such constant in the non-constant scalar curvature case has been established in \cite{dingxin}, improving the pioneering work \cite{pengterngmathann}. 
		
		In this section we address this problem by finding lower bounds for $S$ in terms of the Euler characteristic of the minimal hypersurface in the four-dimensional case. 
		First, observe that, by \eqref{cherngaussbonnetminimal}, we easily obtain the following
		
		\begin{proposition} \label{quadrformula}
			Let $\mathcal{A}:=\mathrm{Vol}_g(M)^{-1}\int_M\lvert A^2\rvert^2 dV_g$; if $S$ is constant, then
			\begin{equation}
				S=\frac{2c}{3}+ \sqrt{-\frac{68}{9}c^2+\frac{32\pi^2\chi(M)}{3\mathrm{Vol}_g(M)}+2\mathcal{A}}. 
			\end{equation}
		\end{proposition}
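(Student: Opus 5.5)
The plan is to read the identity directly off the extrinsic Chern--Gauss--Bonnet formula for minimal hypersurfaces, \eqref{cherngaussbonnetminimal}, using that $S$ is constant, and then to choose the correct root of the resulting quadratic. Since $S$ is constant, every term of \eqref{cherngaussbonnetminimal} except $\lvert A^2\rvert^2$ integrates to a constant times $\mathrm{Vol}_g(M)$. Dividing by $\mathrm{Vol}_g(M)$ and using the definition of $\mathcal{A}$ gives
\[
\frac{32\pi^2\chi(M)}{\mathrm{Vol}_g(M)}=3S^2-6\mathcal{A}+24c^2-4cS .
\]

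This is a quadratic equation in $S$:
\[
3S^2-4cS+\left(24c^2-6\mathcal{A}-\frac{32\pi^2\chi(M)}{\mathrm{Vol}_g(M)}\right)=0 .
\]
The quadratic formula gives
\[
S=\frac{2c}{3}\pm\frac16\sqrt{16c^2-288c^2+72\mathcal{A}+\frac{384\pi^2\chi(M)}{\mathrm{Vol}_g(M)}} .
\]
Dividing the radicand by $36$ yields exactly $-\frac{68}{9}c^2+\frac{32\pi^2\chi(M)}{3\mathrm{Vol}_g(M)}+2\mathcal{A}$. The discriminant is automatically non-negative, since $S$ is a real root.

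The only genuine step, and the main obstacle, is justifying the $+$ sign, i.e.\ showing $S\geq 2c/3$.
\begin{itemize}
\item For $c\in\{0,-1\}$ this is immediate, because $S\geq 0\geq 2c/3$. When $c=0$ the two roots are $\pm$ the square root, so $S\geq 0$ selects the non-negative one.
\item For $c=1$, suppose $S<2/3$. Since $S$ is constant and $2/3<4$, Simons' integral identity \eqref{simonsidint} forces $\nabla A\equiv 0$ and $S(S-4)=0$, hence $S\equiv 0$. So every non-totally geodesic hypersurface has $S\geq 4>2/3$ and lies on the $+$ branch.
\end{itemize}

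I expect the totally geodesic case to need separate treatment. There $\mathcal{A}=0$, and \eqref{cherngaussbonnetminimal} gives $32\pi^2\chi/\mathrm{Vol}=24$, so the radicand equals $4/9$ and $S=0$ corresponds to the $-$ root. I would therefore state the formula for non-totally geodesic $M$, which is the setting in which it is used in Theorem \ref{sboundeuler}, and remark that the totally geodesic case is the trivial exception.
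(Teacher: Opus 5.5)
Your argument is correct and follows the same route as the paper: divide the minimal Chern--Gauss--Bonnet formula \eqref{cherngaussbonnetminimal} by $\mathrm{Vol}_g(M)$ and apply the quadratic formula, after which the paper writes $\pm$ and keeps the $+$ root without comment. Your sign justification fills that gap ($S\geq 0\geq 2c/3$ for $c\in\{0,-1\}$, and $S\geq 4>2/3$ by Simons' identity for non-totally geodesic $M$ when $c=1$), and the exception you flag is genuine and confined to $c=1$: for totally geodesic $M$ in $N^5(1)$ the radicand is $4/9$, so the stated formula would give $S=4/3\neq 0$, and the proposition therefore needs the non-totally-geodesic hypothesis under which it is used in Theorem \ref{sboundeuler}.
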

		\begin{proof}
			The claim follows immediately by the quadratic formula
			\begin{align*}
				S&=\frac{4c\pm\sqrt{16c^2-12(24c^2-\frac{32\pi^2\chi(M)}{\mathrm{Vol}_g(M)}-6\mathcal{A})}}{6}\\
				&=\frac{2c}{3}\pm \sqrt{-\frac{68}{9}c^2+\frac{32\pi^2\chi(M)}{3\mathrm{Vol}_g(M)}+2\mathcal{A}}
			\end{align*}
		\end{proof}
		\begin{rem} \label{remquadrformula}
			It follows that
			\begin{equation}
				\frac{32\pi^2\chi(M)}{3\mathrm{Vol}_g(M)}+2\mathcal{A}\geq \frac{68}{9}c^2.
			\end{equation}
			Proposition \ref{quadrformula} implies that, in order to estimate $S,$ one may instead estimate $\frac{\chi(M)}{\mathrm{Vol}_g(M)}$ and $\mathcal{A}.$
		\end{rem}
			By using the Chern--Gauss--Bonnet formula \eqref{cherngaussbonnetminimal}, we can improve the estimate 
			proven in Corollary \ref{corpinch} by providing a lower bound for $S$ in terms of the Euler characteristic
			and the volume (Theorem \ref{sboundeuler} in the Introduction): we also prove that these 
			estimates are sharp. 
			\begin{proof}[Proof of Theorem \ref{sboundeuler}]
				Letting $\mathcal{A}$ be as in Proposition \ref{quadrformula} and
				$V:=\mathrm{Vol}_g(M)$, by the Chern--Gauss--Bonnet formula we have 
				\begin{equation} 
					\frac{3S^2}{4}-S+6-\frac{8\pi^2\chi(M)}{V}=\frac{3}{2}\mathcal{A}.
				\end{equation}
				By \eqref{ineq},
				it follows that
				\begin{equation}
					\frac{3S^2}{8}  \leq\frac{3S^2}{4}-S+6-\frac{8\pi^2\chi(M)}{V}\leq \frac{7S^2}{8}.  
				\end{equation}
				This gives us the following inequalities 
				\begin{align}
					0&\leq \frac{S^2}{8}+S+\frac{8\pi^2\chi(M)}{V}-6 \label{pineq}\\
					0&\leq \frac{3S^2}{8}-S-\frac{8\pi^2\chi(M)}{V}+6. \label{qineq}
				\end{align}
				Define
				\begin{align*}
					p_{k,v}(x)&=\frac{x^2}{8}+x+\frac{8\pi^2k}{v}-6,\\
					q_{k,v}(x)&=\frac{3x^2}{8}-x-\frac{8\pi^2k}{v}+6.
				\end{align*}
				It is clear that
				\begin{align*}
					p_{k,v}(x)&=0 \iff x=-4\pm8\sqrt{1-\frac{\pi^2k}{v}}\\
					q_{k,v}(x)&=0 \iff x=\frac{4}{3}\pm\frac{8\sqrt{2}}{3}\sqrt{\frac{3\pi^2k}{2v}-1}.
				\end{align*}
				\pagebreak
				
				\noindent
				It is worth to note that 
				\begin{itemize}
					\item the roots of $p_{k,v}$ are real if and only if 
					$k/v\leq\pi^2$
					\item the roots of $q_{k,v}$ are real if and only if
					$k/v\geq2/3\pi^2$,
				\end{itemize}
				which means that both polynomials have real roots if and only if 
				$2/3\pi^2\leq k/v\leq\pi^2$: we highlight the fact that,
				for every value of $k/v$, at least one of the
				polynomials $p_{k,v}$ and $q_{k,v}$ has real roots.
				
				In general we observe that, by \eqref{pineq} and \eqref{qineq}, $p_{\chi(M),V}(S)\geq 0$, 
				$q_{\chi(M),V}(S)\geq 0$ and $S\geq 4$ by Simons' identity
				\eqref{simonsid}, since $M^4$ is not totally geodesic. Furthermore, if 
				$k/v\geq2/3\pi^2$, the smallest root
				of $q_{k,v}$ cannot be greater or equal than $4$: hence, since 
				$p_{\chi(M),V}$ has only one positive real root if $k/v\leq\pi^2$, by \eqref{secondformbound}, \eqref{pineq} and
				\eqref{qineq} we have
				\begin{equation} \label{smax}
					S\geq \max\left(\left\{4\pi\sqrt{\dfrac{\chi(M)}{V}},-4+8\sqrt{1-\dfrac{\pi^2\chi(M)}{V}},
					\dfrac{4}{3}+\dfrac{8\sqrt{2}}{3}\sqrt{\dfrac{3\pi^2\chi(M)}{2V}-1}\right\}\cap\mathbb{R}\right).
				\end{equation}
				First, observe that, if $\chi(M)/V>1/\pi^2$, the second term
				in \eqref{smax} is not a real number: moreover, it can
				be easily checked that, in this interval,
				\[
				\dfrac{4}{3}+\dfrac{8\sqrt{2}}{3}\sqrt{\dfrac{3\pi^2\chi(M)}{2V}-1}
				> 4\pi\sqrt{\dfrac{\chi(M)}{V}}.
				\]
				Now, let $\chi(M)/V\leq 1/\pi^2$: we note that, 
				if $\chi(M)/V<2/3\pi^2$, the third term in \eqref{smax} is not
				real and a simple computation proves that
				\[
				4\pi\sqrt{\dfrac{\chi(M)}{V}}\geq -4+8\sqrt{1-\dfrac{\pi^2\chi(M)}{V}}
				\quad \mbox{ if and only if} \quad 
				\dfrac{9}{25\pi^2}\leq \dfrac{\chi(M)}{V}<\dfrac{2}{3\pi^2}.
				\]
				Finally, if $2/3\pi^2\leq \chi(M)/V\leq 1/\pi^2$, all three
				terms in \eqref{smax} are well-defined: in this case, we have
				\[
				4\pi\sqrt{\dfrac{\chi(M)}{V}}\geq
				\dfrac{4}{3}+\dfrac{8\sqrt{2}}{3}\sqrt{\dfrac{3\pi^2\chi(M)}{2V}-1}>-4+8\sqrt{1-\dfrac{\pi^2\chi(M)}{V}}.
				\]
				The equality cases follow trivially by Corollary \ref{corpinch}, \eqref{pineq} and \eqref{qineq}.
			\end{proof}
		\begin{rem}
			We just point out that, if we assume $\pi^2\chi(M)\geq V$, we can obtain another
			lower bound for the $L^2$-norm of the Weyl tensor: indeed, since
			$S$ is constant, by means of \eqref{cherngaussbonnetminimal} and 
			\eqref{ineq}, we
			obtain
			\[
			32\pi^2\chi(M)\leq \int_M\lvert\operatorname{W}\rvert^2dV_g
			+
			\left(\dfrac{S^2}{6}-4S+24\right)V=\int_M
			\lvert\operatorname{W}\rvert^2dV_g+\dfrac{1}{6}(S-12)^2V,
			\]
			which, by hypothesis, yields
			\[
			\int_M\lvert\operatorname{W}\rvert^2 dV_g\geq\pi^2\left[
			32-\dfrac{1}{6}(S-12)^2\right]\chi(M).
			\]
			We can compare the right-hand side to the one appearing in
			\eqref{boundsphere} and observe that 
			\[
			32-\dfrac{1}{6}(S-12)^2 > \dfrac{64}{3}, \quad 
			\mbox{ if } 4\leq S\leq 20.
			\]
			Furthermore, by \eqref{scalconst} the maximum value of the right-hand side in terms of 
			$S$ is obtained when $(M^4,g)$ is scalar-flat: in this case, however, we obtain the same topological lower bound proven in Corollary \ref{corbound}.
		\end{rem}
		Theorem \ref{sboundeuler} suggests that, if one manages to 
		obtain useful bounds for the volume of $M$, we can obtain lower bounds for $S$ in terms of $\chi(M)$ that might improve the 
		best constants present in the literature \cite{yangcheng}, which is the core of Corollary \ref{corsboundeuler}. 
		In order to prove it, we have to give some preliminary definitions. Let 
		$\mathbb{S}^5\subset\mathbb{R}^6$ be the standard sphere of radius $1$ centered at the origin: 
		for every $a\in\mathbb{S}^5$, we can define $\phi_a(x)=<x,a>$, where
		$x\in\mathbb{R}^6$. The set 
		\[
		Z_a:=\{x\in\mathbb{R}^6:\phi_a(x)=0\}
		\]
		defines a hyperplane in $\mathbb{R}^6$ which passes through the
		origin and, therefore, intersects $\mathbb{S}^5$ exactly in an equatorial $4$-sphere, which we denote by $\mathbb{S}_a^4$.
		If we restrict $\phi_a$ to a closed, minimal hypersurface $M^4$
		of $\mathbb{S}^5$, we obtain that $Z_a$ restricts
		to $Z_a^M:=M^4\cap\mathbb{S}_a^4$.
		In this context, we can use Theorem \ref{sboundeuler} and impose a bound on the volumes of $Z_a^M$ in order to improve the known lower bounds on $S$, under some topological conditions.
		\begin{proof}[Proof of Corollary \ref{corsboundeuler}]
			Let again $V:=\mathrm{Vol}_g(M)$: by \cite[Theorem 1.1]{chenli}, we know that
			\[
			V\leq\dfrac{5\lvert\mathbb{S}^5\rvert}{4\lvert\mathbb{S}^4\rvert}\sup_{a\in\mathbb{S}^5}\mathrm{Vol}_g(Z_a^M),
			\]
			which, by hypothesis, yields
			\begin{equation} \label{unifvolbound}
				V\leq\dfrac{5\lvert\mathbb{S}^5\rvert}{4}=\dfrac{5\pi^3}{4}.
			\end{equation}
			Now, if $\chi(M)\leq 0$, by Theorem \ref{sboundeuler} and 
			\eqref{unifvolbound} we get
			\[
			S\geq -4+8\sqrt{1-\dfrac{\pi^2\chi(M)}{V}}\geq 
			-4+8\sqrt{1-\dfrac{4\chi(M)}{5\pi}},
			\]
			which, if $\chi(M)\leq -2$, implies
			\[
			S\geq -4+8\sqrt{1+\dfrac{8}{5\pi}}>\dfrac{16}{3}.
			\]
			Now, assume that $\chi(M)\geq 0$: by hypothesis, 
			\[
			\dfrac{\chi(M)}{V}\geq \dfrac{4\chi(M)}{5\pi},
			\]
			which means that, if $\chi(M)\geq 4$,
			\[
			\dfrac{\chi(M)}{V}\geq \dfrac{16}{5\pi^3}>\dfrac{1}{\pi^2}
			\]
			and, by Theorem \ref{sboundeuler},
			\[
			S > \dfrac{4}{3}+\dfrac{8\sqrt{2}}{3}\sqrt{\dfrac{3\pi^2\chi(M)}{2V}-1}\geq \dfrac{4}{3}+\dfrac{8\sqrt{2}}{3}\sqrt{\dfrac{6\chi(M)}{5\pi}-1}.
			\]
			Note that right-hand side is well-defined, since $\chi(M)\geq 4$:
			furthermore, if $\chi(M)>4$ (i.e. $\chi(M)\geq 6$), we have that
			\[
			S > \dfrac{4}{3}+\dfrac{8\sqrt{2}}{3}\sqrt{\dfrac{36-5\pi}{5\pi}}>\dfrac{16}{3}.
			\]
		\end{proof}
		\begin{rem}
			\begin{itemize}
				\item If we assume that $\chi(M)=2$, the argument of
				Corollary \ref{corsboundeuler} does not work and we just obtain $S\geq 4$: indeed, by using \eqref{unifvolbound} we would obtain
				\[
				\dfrac{2}{V}\geq \dfrac{8}{5\pi^3}>\dfrac{9}{25\pi^2}.
				\]
				By Theorem \ref{sboundeuler}, if $2\pi^2\leq V<(50/9)\pi^2$,
				we have that
				\[
				S\geq4\sqrt{\dfrac{8}{5\pi}},
				\]
				which does not tell us anything, since the right-hand side is less than $4$. 
				On the other hand, if $V<2\pi^2$, we would have
				\[
				S\geq \dfrac{4}{3}+\dfrac{8\sqrt{2}}{3}\sqrt{\dfrac{3\pi^2}{V}-1}>4,
				\]
				which, again, is already known;
				\item let us assume that the hypothesis on $\mathrm{Vol}_g(Z_a^M)$ in Corollary \ref{corsboundeuler} does not hold. If $\chi(M)\geq 0$ and $\chi(M)/V\leq 9/25\pi^2$, we can still obtain a lower bound for $S$: indeed, in \cite[Corollary 5]{chengliyau} Cheng, Li and Yau proved that, in our case, 
				\[
				V\geq \left(1+\dfrac{1}{B_4}\right)\lvert\mathbb{S}^4\rvert,
				\]
				where $B_4=11+2e^{128}$. Using this estimate, by 
				Theorem \ref{sboundeuler} we obtain
				\[
				S\geq -4+8\sqrt{1-\dfrac{\pi^2\chi(M)}{V}}\geq 
				-4+8\sqrt{1-\dfrac{3\chi(M)}{8\left(1+\frac{1}{11+2e^{128}}\right)}},
				\]
				where the right-hand side is only defined when 
				$\chi(M)\in\{0,2\}$. However, in this case we would simply get
				$S\geq 4$, which is not a satisfactory lower bound. 
			\end{itemize}
		\end{rem}
		
		\section{Bochner-type formulas and rigidity results} \label{bochnersection}
		
		In this final section, by means of some Bochner--Weitzenb\"{o}ck formulas, we
		derive some rigidity results for minimal hypersurfaces in the standard sphere, under some curvature hypotheses concerning the Weyl tensor. 
		Given a $(r,s)$-tensor field $\operatorname{T}$, we denote the components of its covariant derivative $\nabla\operatorname{T}$ with respect to a local orthonormal coframe $\{\theta^i\}$ as $T_{i_1...i_s,k}^{j_1...j_r}$, where
		\[
		T_{i_1...i_s,k}^{j_1...j_r}\theta^k=dT_{i_1...i_s}^{j_1...j_r}-\sum_{l=1}^sT_{i_1...i_{l-1}ti_{l+1}...i_{s}}^{j_1...j_r}\theta_{i_l}^t+\sum_{l=1}^rT_{i_1...i_{s}}^{j_1...j_{l-1}tj_{l+1}...j_r}\theta_t^{j_l},
		\]
		where $T_{i_1...i_s}^{j_1...j_r}$ are the local components of $\operatorname{T}$ and $\theta_j^i$ are the \emph{Levi-Civita connection forms} of the chosen coframe (see e.g. \cite{CatinoMastroliaBook}).
		The only exception will be made for the second fundamental form $A$: in this case, we will denote the local components of $\nabla A$ as $A_{ijk}$ instead of $A_{ij,k}$, in accordance with the literature.
		
		First, we want to prove sharp integral bounds for the squared norm of $\nabla A^2$ (Theorem \ref{sharpquadrbound} and 
		Corollary \ref{sharpquadrcor}), in order to provide an integral characterization of totally geodesic spheres and Clifford hypersurfaces in $\mathbb{S}^{n+1}$ (or, more precisely, in a positively curved space form $(N^{n+1}(1),g_N)$). In order to do so, we show the validity of useful Bochner--Weitzenb\"{o}ck formulas, which can be seen as extensions of Simons' identity \eqref{simonsid}. 
		\begin{proof}[Proof of Theorem \ref{sharpquadrbound} and Corollary \ref{sharpquadrcor}]
			First, recall the standard formula
			\[
			\dfrac{1}{2}\Delta f^2=\lvert\nabla f\rvert^2+f\Delta f,
			\]
			holding for every $f\in C^2(M)$. Then, we have
			\[
			\dfrac{1}{2}\Delta S^2=\lvert\nabla S\rvert^2+S\Delta S,
			\]
			which, by \eqref{simonsid}, becomes
			\begin{equation} \label{simons2}
				\dfrac{1}{2}\Delta S^2=2S\lvert\nabla A\rvert^2+2S^2(nc-S)+\lvert\nabla S\rvert^2.
			\end{equation}
			Now, by \eqref{simonsidlocal}, we can compute
			\begin{align*}
				A_{ij,k}^2&=(A_{it}A_{tj})_k=A_{itk}A_{tj}+A_{it}A_{tjk}\\
				&\Longrightarrow \Delta A_{ij}^2=A_{jt}\Delta A_{it}+A_{it}\Delta A_{jt}+2A_{ikt}A_{jkt}=
				2(nc-S)A_{ij}^2+2A_{ikt}A_{jkt};
			\end{align*}
			recalling that
			\[
			\dfrac{1}{2}\Delta\lvert A^2\rvert^2=\lvert\nabla A^2\rvert^2+A_{ij}^2\Delta A_{ij}^2,
			\]
			we obtain
			\begin{equation} \label{simonsasquare}
				\dfrac{1}{2}\Delta\lvert A^2\rvert^2=\lvert\nabla A^2\rvert^2+2\lvert A^2\rvert^2(nc-S)+2A_{ij}^2A_{ikt}A_{jkt}.
			\end{equation}
			Now, let $c=1$: we integrate by parts the last term of the right-hand side of \eqref{simonsasquare} and use \eqref{simonsidlocal} to obtain
			\begin{align*}
				2\int_MA_{ij}^2A_{ikt}A_{jkt}dV_g&=-2\int_M\left(A_{ij,k}^2A_{it}A_{jtk}+A_{ij}^2A_{it}\Delta A_{jt}\right)dV_g=\\
				&=-2\int_M\left[
				(A_{isk}A_{sj}+A_{is}A_{sjk})A_{it}A_{jtk}+\lvert A^2\rvert^2(n-S)\right]dV_g,
			\end{align*}
			which, by renaming the indices in the second term inside the round brackets, implies that
			\[
			2\int_MA_{ij}^2A_{ikt}A_{jkt}dV_g=-
			\int_MA_{it}A_{sj}A_{isk}A_{jtk}dV_g+\int_M\lvert A^2\rvert^2(S-n)dV_g.
			\]
			\noindent
			Let us define the tensor $\operatorname{T}$ locally
			expressed by
			\[
			T_{ijk}:=A_{is}A_{sjk};
			\]
			we observe that, since $A$ is a Codazzi tensor, $\operatorname{T}$ is symmetric
			with respect to the last two indices.
			Applying the definition of $\operatorname{T}$ and the Cauchy-Schwarz inequality we obtain
			\[
			-\int_MA_{it}A_{sj}A_{isk}A_{jtk}\,dV_g=
			-\int_MT_{tsk}T_{stk} \,dV_g\geq
			-\int_M\lvert T\rvert^2dV_g =
			- \int_MA_{ij}^2A_{ikt}A_{jkt}\,dV_g,
			\]
			i.e.
			\begin{equation} \label{intestimtrace}
				3\int_M A_{ij}^2A_{ikt}A_{jkt}dV_g
				\geq \int_M\lvert A^2\rvert^2(S-n)dV_g:
			\end{equation}
			integrating \eqref{simonsasquare} and using \eqref{intestimtrace} and
			the hypothesis,
			we obtain the right-hand side of \eqref{boundasquare}.
			
			Now, we observe that $A_{ij}^2$ and $K_{ij}:=A_{ikt}A_{jkt}$ are symmetric and positive semidefinite as matrices: hence, as a consequence of Cauchy-Schwarz inequality and \eqref{weylminimalgeneral2}, we get
			\[
			A_{ij}^2A_{ikt}A_{jkt}=\lvert A_{ij}^2A_{ikt}A_{jkt}\rvert\leq\lvert A^2\rvert\cdot\lvert\operatorname{K}\rvert 
			\leq\sqrt{\dfrac{n^2-3n+3}{n(n-1)}} S\cdot\lvert\nabla A\rvert^2. 
			\]
			Here we used the fact that, given a symmetric, positive semidefinite matrix $B$ in an $n$-dimensional real 
			inner product space $(V,\langle,\rangle)$, 
			if $\mu_i$ are the eigenvalues of $B$, then
			\[
			|B|=\sqrt{\sum_{i=1}^n\mu_i^2}\leq\sum_{i=1}^n\mu_i=\operatorname{tr}(B).
			\]
			By integrating \eqref{simonsasquare}, we obtain
			\[
			0\leq\int_M\left(\lvert\nabla A^2\rvert^2-2\lvert A^2\rvert^2(S-n)+2\sqrt{\dfrac{n^2-3n+3}{n(n-1)}}S\cdot\lvert\nabla A\rvert^2\right)dV_g;
			\]
			now, we use \eqref{simons2} to get
			\begin{equation*}
				0\leq\int_M\lvert\nabla A^2\rvert^2-2\lvert A^2\rvert^2(S-n)+\sqrt{\dfrac{n^2-3n+3}{n(n-1)}}[2S^2(S-n)-\lvert\nabla S\rvert^2]dV_g
			\end{equation*}
			and, hence, the left-hand side of \eqref{boundasquare}. 
			
			Now, we study the equality cases.
			\begin{enumerate}
				\item By our previous argument, equality holds in the right-hand side of \eqref{boundasquare} if and only if 
				\[
				T_{jik}T_{ijk}=\lvert\operatorname{T}\rvert^2=T_{ijk}T_{ijk}
				\]
				on $M$. 
				It is straightforward to observe that,
				in this case, $\operatorname{T}$ is
				also symmetric with respect to the
				first two indices, i.e.
				\[
				T_{ijk}=T_{jik}=T_{ikj}.
				\]
				Using the symmetries of $\operatorname{T}$, we can compute
				\begin{align*}
					A_{ij,k}^2-A_{ik,j}^2&=
					(A_{is}A_{sj})_k-(A_{is}A_{sk})_j=
					A_{isk}A_{sj}+A_{is}A_{sjk}-
					A_{isj}A_{sk}-A_{is}A_{skj}=\\
					&=T_{jik}+T_{ijk}-T_{kij}-T_{ikj}=0,
				\end{align*}
				which means that $A^2$ is a Codazzi tensor: 
				by a result due to Umehara \cite[Theorem 1.3]{umehara} we can conclude that 
				$(M^n,g)$ must also be isoparametric, with $\nabla A\equiv 0$: by Proposition \ref{localisometryprop}, the claim follows.
				\item If $(M,g)$ is either totally geodesic or locally isometric to a Clifford hypersurface, the equality
				in the left-hand side of \eqref{boundasquare} can be immediately obtained by means of Proposition \ref{localisometryprop}. 
				Conversely, assume that equality in \eqref{boundasquare} holds. By the equality in \eqref{weylminimalgeneral2},
				we immediately obtain that $(M^4,g)$ is locally conformally flat.
				Assume that there exists $p\in M$ such that
				$A_{ij}^2K_{ij}$ is not zero. By Cauchy-Schwarz, there exists $\lambda>0$ such that
				\begin{equation} \label{proportional}
					K_{ij}=\lambda\cdot A_{ij}^2,
				\end{equation}
				for every $i,j$. 
				Tracing \eqref{proportional}, we obtain
				\[
				\lvert\nabla A\rvert^2=\lambda\cdot S:
				\]
				since $\lambda>0$, contracting \eqref{proportional} with $A_{ij}^2$,
				we have
				\[
				\lambda\cdot\lvert A^2\rvert^2= A_{ij}^2K_{ij}= S\cdot \lvert\nabla A\rvert^2=\lambda\cdot S^2,
				\]
				which, by local conformal flatness, implies
				\[
				\dfrac{n^2-3n+3}{n(n-1)}S^2=\lvert A^2\rvert^2=S^2,
				\]
				contradicting the fact that $A_{ij}^2K_{ij}\neq0$ at $p$.
				
				Therefore, $A_{ij}^2K_{ij}=S\cdot\lvert\nabla A\rvert^2=0$ on $M$. 
				If $\nabla A\equiv 0$ on $M$, the conclusion follows directly
				from Proposition \ref{localisometryprop}, hence let us assume that there exists $q\in M$ such that 
				$\nabla A\not\equiv 0$ at $q$: by continuity, there exists an open neighborhood $U$ of $q$ in which 
				$\nabla A\not\equiv 0$, which implies that $S\equiv 0$ in $U$. On the other hand, if 
				$S\neq 0$ at some point $q'$, there exists an open neighborhood $U'$ of $q'$ in which $\nabla A\equiv 0$, which means
				that $S$ is constant on $U'$. By continuity, the only possibility is that $S\equiv 0$ on $M$, which contradicts
				the existence of a point $q$ at which $\nabla A\not\equiv 0$, and this
				concludes the proof of the Theorem. 
				
				In order to prove Corollary \ref{sharpquadrcor}, we observe that the inequality \eqref{refinedboundasquare} is immediately deduced from \eqref{boundasquare}, using \eqref{weylminimalgeneral2} and \eqref{tracriccmingen}. The claim on the equality case follows from the characterizations of the equalities in Theorem \ref{sharpquadrbound}.
			\end{enumerate}
		\end{proof}
		\begin{rem}
			By Simons' identity \eqref{simonsidint}, we know that, if $(M^4,g)$ is a closed hypersurface of a positive space form $(N^5(1),g_N)$ and 
			$S\leq n$, then either $S\equiv 0$ or $S\equiv n$ and $(M^4,g)$ is isoparametric. We observe that it is possible to 
			prove the same result using \eqref{simonsasquare}. Indeed, we know that $A_{ij}^2A_{ikt}A_{jkt}$ is non-negative on $M$,
			since it can be seen as the trace of the product of two positive semidefinite matrices: hence, integrating \eqref{simonsasquare}
			and using the fact that $\lvert A^2\rvert^2(n-S)$ and $A_{ij}^2A_{ikt}A_{jkt}$ are non-negative, we immediately get that
			$\nabla A^2\equiv 0$: exploiting again \eqref{simonsasquare}, we also obtain that 
			$\lvert A^2\rvert^2(n-S)=A_{ij}^2A_{ikt}A_{jkt}=0$, which proves the claim. 
		\end{rem}
		For the last results of the section, we will need a useful Proposition proven in \cite[Lemma 2.4]{huisken} and, later, in \cite[Proposition 2.2]{case2023sharp}, which can be seen as a generalization of the sharp bounds described in \cite{pengterng}:
		
		\begin{proposition}
			Let $V$ be a real $n$-dimensional inner product space, $n\geq 3$, and $A\in\operatorname{End}(V)$ be a symmetric, trace-free endomorphism of $V$. Then, we have that
			\[
			-\dfrac{(n-2)}{\sqrt{n(n-1)}}\lvert A\rvert^3\leq \operatorname{tr}(A^3)\leq \dfrac{(n-2)}{\sqrt{n(n-1)}}\lvert A\rvert^3;
			\]
			moreover, one of the equalities hold if and only if $A$ has an eigenspace of dimension at least $n-1$.
		\end{proposition}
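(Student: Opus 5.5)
Since $A$ is symmetric, I will work in an orthonormal basis of eigenvectors, so the statement reduces to a constrained optimization problem for the eigenvalues $\lambda_1,\dots,\lambda_n$. Concretely, I want to prove that whenever $\sum_i\lambda_i=0$ and $\sum_i\lambda_i^2=1$, one has $\lvert\sum_i\lambda_i^3\rvert\le (n-2)/\sqrt{n(n-1)}$. Homogeneity of degree three reduces the general case to $\lvert A\rvert=1$, and replacing $A$ by $-A$ exchanges the two inequalities. Hence it is enough to prove the upper bound and describe when it is attained.

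The feasible set $\{\sum\lambda_i=0,\ \sum\lambda_i^2=1\}$ is a compact $(n-2)$-sphere, so the cubic $f=\sum\lambda_i^3$ attains its maximum on it. At a maximizer, the Lagrange multiplier rule gives $3\lambda_i^2=\alpha+2\beta\lambda_i$ for every $i$. Thus each $\lambda_i$ is a root of one fixed quadratic, and the eigenvalues take at most two distinct values $a>b$, with multiplicities $k$ and $n-k$, $1\le k\le n-1$. The two constraints then determine the values:
\[
a=\sqrt{\tfrac{n-k}{nk}},\qquad b=-\sqrt{\tfrac{k}{n(n-k)}}.
\]
A direct substitution yields
\[
ka^3+(n-k)b^3=\frac{n-2k}{\sqrt{nk(n-k)}}.
\]

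It remains to maximize $g(k)=(n-2k)/\sqrt{nk(n-k)}$ over the integers $1\le k\le n-1$. For $k\ge n/2$ we have $g(k)\le 0$. On the range $k<n/2$, the numerator $n-2k$ decreases in $k$ and the factor $k(n-k)$ increases in $k$, so $g$ is strictly decreasing. Therefore the maximum is $g(1)=(n-2)/\sqrt{n(n-1)}$, and this value is attained only when $k=1$.

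For the equality case, suppose the maximum is attained at some $A$. Its eigenvalues form a critical point of the constrained problem, so they must be of the two-valued type above, and only $k=1$ achieves the maximal value. In that case the eigenvalue $b$ has an eigenspace of dimension $n-1$. For the lower bound, applying the same reasoning to $-A$ forces the configuration $k=n-1$, which again gives an eigenspace of dimension $n-1$. Conversely, if $A$ has an eigenspace of dimension at least $n-1$, then $A$ is either zero or of this two-valued form, and equality follows by direct computation. I expect the main care to be needed in the equality discussion, namely in using the fact that any extremizer must be a critical point. The rest of the argument is routine algebra.
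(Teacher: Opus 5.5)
Your proof is correct and complete. The reduction to the eigenvalue vector on the $(n-2)$-sphere $\{\sum\lambda_i=0,\ \sum\lambda_i^2=1\}$ checks out, as do the Lagrange condition $3\lambda_i^2=\alpha+2\beta\lambda_i$ forcing at most two distinct eigenvalues, the explicit values $a,b$, the formula $(n-2k)/\sqrt{nk(n-k)}$, and its strict maximality at $k=1$. Together with the passage to $-A$, these also give the equality characterization in both directions.

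The paper does not prove this proposition itself; it quotes it from Huisken and from Case--Tyrrell. Your Lagrange-multiplier argument is the classical proof of this inequality, which goes back to Okumura, so it supplies a self-contained version of the cited fact rather than a genuinely different route.

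Two small points are worth stating explicitly. First, the multiplier rule applies because the constraint gradients $(1,\dots,1)$ and $2\lambda$ are linearly independent on the feasible set, since $\lambda\perp(1,\dots,1)$ and $\lambda\neq 0$. Second, in the equality discussion, set aside the trivial case $A=0$ before normalizing to $\lvert A\rvert=1$; there both equalities hold and the eigenspace has dimension $n$.
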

		\noindent
		As a consequence, in our setting, we have the following
		\begin{cor} \label{lagrangecor}
			Let $(M^4,g)$ be a minimally immersed hypersurface in a space form $(N^5(c),g_N)$. Then, at every $p\in M$,
			\begin{equation} \label{lagrange}
				-\sqrt{\dfrac{S^3}{3}}\leq\operatorname{tr(A^3)}\leq\sqrt{\dfrac{S^3}{3}}
			\end{equation}
			and equality holds at $p$ if and only if $A$ has a principal curvature of multiplicity at least $3$ at $p$.
		\end{cor}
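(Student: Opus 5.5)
This is a direct specialization of the preceding Proposition (Huisken; Case and the second author) to $n=4$. Minimality means $H=\operatorname{tr}A=0$, so at each $p\in M$ the shape operator $A_p$ is a symmetric, trace-free endomorphism of the $4$-dimensional inner product space $(T_pM,g_p)$.

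The plan is to apply that Proposition with $n=4\geq 3$. The constant becomes
\[
\frac{n-2}{\sqrt{n(n-1)}}=\frac{2}{\sqrt{12}}=\frac{1}{\sqrt{3}}.
\]
Since $\lvert A\rvert^3=S^{3/2}$, the bound reads $\lvert\operatorname{tr}(A^3)\rvert\leq S^{3/2}/\sqrt{3}=\sqrt{S^3/3}$, which is exactly \eqref{lagrange}.

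For the equality case, the Proposition says equality holds if and only if $A_p$ has an eigenspace of dimension at least $n-1=3$. This means some principal curvature at $p$ has multiplicity at least $3$, as claimed. The case $A_p=0$ is included consistently: then $S=0$, both sides vanish, and all four principal curvatures coincide.

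If one wants a self-contained check instead of citing the Proposition, diagonalize $A$ at $p$ as in \eqref{diagonalsecond}. Then maximize $\sum\lambda_i^3$ subject to $\sum\lambda_i=0$ and $\sum\lambda_i^2=S$ using Lagrange multipliers. The critical condition $3\lambda_i^2=\alpha+2\beta\lambda_i$ forces the $\lambda_i$ to take at most two values. Comparing the splittings $(3,1)$ and $(2,2)$ gives the extremum $\pm\sqrt{S^3/3}$, attained only in the $(3,1)$ configuration, for example $\lambda=(a,a,a,-3a)$ with $S=12a^2$. The only mildly delicate point is tracking which multiplicity pattern realizes the extremum, and the Proposition already settles this.
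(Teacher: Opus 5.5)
Your proposal is correct and matches the paper: the paper obtains the corollary by specializing the Huisken/Case--Tyrrell proposition to $n=4$, where $(n-2)/\sqrt{n(n-1)}=1/\sqrt{3}$ and an eigenspace of dimension $n-1=3$ is exactly a principal curvature of multiplicity at least $3$. Your optional Lagrange-multiplier check is a valid extra verification but is not needed.
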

		

		Now, let $(M,g)$ be a Riemannian manifold of dimension $n\geq 4$. We say that
		$g$ is \emph{harmonic Weyl} if the Weyl tensor $\operatorname{W}$ is
		divergence free, i.e. if
		\[
		\delta\operatorname{W}\equiv 0, \mbox{ on } M.
		\]
		In components, this can be rewritten as
		\[
		(\delta\operatorname{W})_{ijk}=W_{tijk,t}=0;
		\]
		since $n\geq 4$, this condition is equivalent to the vanishing of the so-called \emph{Cotton tensor} $\operatorname{C}$, locally expressed
		by
		\begin{equation} \label{cottondef}
			C_{ijk}=R_{ij,k}-R_{ik,j}-\dfrac{1}{2(n-1)}(R_k\delta_{ij}-R_j\delta_{ik})=
			P_{ij,k}-P_{ik,j},
		\end{equation}
		where $P_{ij}$ are the local components
		of the \emph{Schouten tensor}
		$\operatorname{P}$.
		Indeed, since 
		$n\geq 4$, we can express 
		$\operatorname{C}$ in terms of 
		the Weyl tensor $\operatorname{W}$ as
		(see e.g. \cite{CatinoMastroliaBook})
		\begin{equation} \label{cottonweyl}
			C_{ijk}=-\dfrac{n-2}{n-3}(\delta\operatorname{W})_{ijk}.
		\end{equation}
		On one hand, by \eqref{cottonweyl} the harmonic Weyl condition is trivially satisfied
		on any locally conformally flat manifold; on the other hand, by \eqref{cottondef} the same
		condition also holds on every Einstein manifold, since they all are also
		Ricci parallel (i.e. $\nabla\operatorname{Ric}\equiv 0$). 
		
		If $n=4$, by \eqref{weyldecomp}, we can introduce the notion of
		\emph{half harmonic Weyl} metric, i.e. such that $\operatorname{W}^+$
		or $\operatorname{W}^-$ are divergence free. Recall that the action of 
		the Hodge star operator commutes with the covariant differentiation of
		$2$-forms, hence
		the decomposition of $\Lambda^2=\Lambda_+\oplus\Lambda_-$ is preserved by the
		Levi-Civita connection, which implies that
		\begin{equation*}
			\nabla\operatorname{W}=\nabla\operatorname{W}^++\nabla\operatorname{W}^-,
		\end{equation*}
		and, therefore,
		\begin{equation*}
			\delta(\operatorname{W})=\delta(\operatorname{W})^++\delta(\operatorname{W})^-.
		\end{equation*}
		With respect to a local orthonormal (co)frame, we can 
		express the divergence of $\operatorname{W}^{\pm}$ as follows (see e.g. \cite{CatinoDamenoMastrolia} for an explicit expression of these components):
		\[
		(\delta\operatorname{W}^{\pm})_{ijk}=W_{tijk,t}^{\pm}.
		\]
		Using again the fact that the Hodge star operator commutes with the covariant
		derivative induced by the Levi-Civita connection,
		by \eqref{hodgeweylplusmincomp} we obtain
		\begin{equation} \label{divergplusmin}
			(\delta\operatorname{W}^{\pm})_{ijk}=
			\dfrac{1}{2}[(\delta\operatorname{W})_{ijk}\pm
			\star(\delta(\operatorname{W})_{ijk})]=
			\dfrac{1}{2}\left[W_{tijk,t}\pm\dfrac{1}{2}\mu_{jkrs}W_{tirs,t}\right].
		\end{equation}

		It is well known that, if $(M,g)$ is a half harmonic Weyl manifold, the
		(anti-)self-dual Weyl tensor satisfies a remarkable Bochner--Weitzenb\"{o}ck formula:
		in dimension four, it can be locally expressed as follows 
		(\cite{Derdzinski}; see also \cite{CatinoMastroliaBook}):
		\begin{equation} \label{bochnercomp}
			\Delta W_{ijkl}^{\pm}=\dfrac{R}{2}W_{ijkl}^{\pm}-2(W_{ipjq}^{\pm}W_{pqkl}^{\pm}-
			W_{iplq}^{\pm}W_{jpkq}^{\pm}+W_{ipkq}^{\pm}W_{jplq}^{\pm}).
		\end{equation}
		Using the first Bianchi identity, one can show that (\cite{CatinoMastroliaBook})
		\begin{equation} \label{eqweyl1}
			W_{ipjq}^{\pm}W_{pqkl}^{\pm}=\dfrac{1}{2}W_{ijpq}^{\pm}W_{pqkl}^{\pm};
		\end{equation}
		moreover, exploiting \eqref{symbolcontraction} and \eqref{hodgeweylplusmincomp}, it is possible to prove that
		\begin{equation} \label{eqweyl2}
			W_{ipkq}^{\pm}W_{jplq}^{\pm}+W_{iplq}^{\pm}W_{jpkq}^{\pm}=\dfrac{1}{8}\lvert\operatorname{W}^{\pm}\rvert^2\delta_{ij}\delta_{kl}.
		\end{equation}
		Combining \eqref{eqweyl1} and \eqref{eqweyl2} and observing
		that $i\neq j$ in \eqref{bochnercomp}, we can rewrite it as follows:
		\begin{equation} \label{bochnercompshort}
			\Delta W_{ijkl}^{\pm}=\dfrac{R}{2}W_{ijkl}^{\pm}-W_{ijpq}^{\pm}W_{pqkl}^{\pm}-2W_{ipkq}^{\pm}W_{jplq}^{\pm}.
		\end{equation}
		Now, contracting \eqref{bochnercompshort} with $W_{ijkl}$ and recalling
		that, for every tensor field $\operatorname{T}$,
		\[
		\dfrac{1}{2}\Delta\lvert\operatorname{T}\rvert^2=\lvert\nabla\operatorname{T}\rvert^2+
		\langle\operatorname{T},\Delta\operatorname{T}\rangle,
		\]
		we obtain 
		\begin{equation} \label{bochscalar}
			\dfrac{1}{2}\Delta\lvert\operatorname{W^{\pm}}\rvert^2=\lvert\nabla\operatorname{W^{\pm}}\rvert^2+\dfrac{R}{2}\lvert\operatorname{W^{\pm}}\rvert^2-3W_{ijkl}^{\pm}W_{pqkl}^{\pm}W_{ijpq}^{\pm}.
		\end{equation}
		We observe that, by the orthogonality property of \eqref{weyldecomp},
		both \eqref{bochnercompshort} and \eqref{bochscalar} hold 
		also for $\operatorname{W}$ in the harmonic Weyl case (for
		useful applications of these formulas, see, for instance, \cite{Derdzinski, GurskyAnnals, Gurskyharmonicweyl, gurskylebrun, PengWu}). 
		
		In the special context of minimal hypersurfaces immersed into space forms, we are able to prove an integral inequality for
		half harmonic Weyl induced metrics (Theorem \ref{harmweylinequality}).
		
		\begin{proof}[Proof of Theorem \ref{harmweylinequality}]

			Without loss of generality, we assume that $\delta\operatorname{W}^+\equiv 0$ and that $(M^4,g)$ is not locally conformally flat. 
			By integrating both sides of \eqref{bochscalar} for $\operatorname{W}^+$ we get 
			\begin{equation} \label{integrweitz}
				\int_M |\nabla\operatorname{W}^+|^2 dV_g= 3\int_M W_{ijkl}^+W_{pqkl}^+W_{ijpq}^+ dV_g-\int_M\frac{12c-S}{2}|\operatorname{W}^+|^2 dV_g,
			\end{equation}
			which,
			by \eqref{weylnormhyper}, we rewrite as 
			\begin{equation}
				\int_M |\nabla \operatorname{W}^+|^2 dV_g= 3\int_M W_{ijkl}^+W_{klpq}^+W_{ijpq}^+ dV_g-\int_M\frac{12c-S}{2}\bigg( \frac{7}{6}S^2-2|A^2|^2\bigg) dV_g.
			\end{equation}
			In order to simplify the computations, we will deal with the components of the full Weyl tensor $\operatorname{W}$. We use \eqref{weylfialkow} to express the cubic term by means of the second fundamental form: first,
			we compute
			\begin{align*}
				W_{ijkl}W_{pqkl}&=\left(A_{ik}A_{jl}-A_{il}A_{jk}+
				F_{ik}\delta_{jl}-F_{il}\delta_{jk}+F_{jl}\delta_{ik}-F_{jk}\delta_{il}\right)\\
				&\ast \left(A_{pk}A_{ql}-A_{pl}A_{qk}+
				F_{pk}\delta_{ql}-F_{pl}\delta_{qk}+F_{ql}\delta_{pk}-F_{qk}\delta_{pl}\right)=\\
				&=2\left[A_{ip}^2A_{jq}^2-
				A_{iq}^2A_{jp}^2+A_{ik}A_{jq}F_{pk}-
				A_{ik}A_{jp}F_{qk}+A_{ip}A_{jk}F_{qk}-A_{iq}A_{jk}F_{pk}\right.+\\
				&+A_{pk}A_{jq}F_{ik}-A_{qk}A_{jp}F_{ik}+
				A_{ip}A_{qk}F_{jk}-A_{iq}A_{pk}F_{jk}+F_{ip}^2\delta_{jq}-
				F_{iq}^2\delta_{jp}\left.\right.+\\
				&+\left.F_{jq}^2\delta_{ip}-F_{jp}^2\delta_{iq}
				+2F_{ip}F_{jq}-2F_{iq}F_{jp}\right].
			\end{align*}
			Now, taking into account the symmetries of $\operatorname{W}$, $A$ and $F$, recalling that $\operatorname{W}$ is a traceless tensor and renaming the indices of the different terms in the previous equation, we can realize that
			\[
			W_{ijkl}W_{pqkl}W_{ijpq}=(4A_{ip}^2A_{jq}^2+16A_{ik}A_{jq}F_{pk}+8F_{ip}F_{jq})W_{ijpq}:
			\]
			observing that, by \eqref{fialkow},
			\[
			\operatorname{tr}(\operatorname{F})=\dfrac{1}{6}S\,\quad \mbox{ and } \quad
			\lvert\operatorname{F}\rvert^2=\dfrac{1}{4}\left(\lvert A^2\rvert^2-\dfrac{2}{9}S^2\right),
			\]
			we can finally compute
			\begin{align*}
				W_{ijpq}W_{ijkl}W_{pqkl}&=(4A_{ip}^2A_{jq}^2+16A_{ik}A_{jq}F_{pk}+8F_{ip}F_{jq})\ast\\
				&\ast\left(A_{ip}A_{jq}-A_{iq}A_{jp}+
				F_{ip}\delta_{jq}-F_{iq}\delta_{jp}+F_{jq}\delta_{ip}-F_{jp}\delta_{iq}\right)=\\
				&=10[\operatorname{tr}(A^3)]^2
				-28\operatorname{tr}(A^6)+19S\lvert A^2\rvert^2
				-\dfrac{23}{9}S^3.
			\end{align*}
			Around a point $p\in M$ we can choose a Darboux frame such that $\mathcal{W}^+$ is in diagonal form at $p$: in this case, 
			by \eqref{weyleigen} the eigenvalues of $\mathcal{W}^+$ and $\mathcal{W}^-$ are equal, thus, if $i\neq j\neq k\neq l$
			we have that
			\[
			W_{ijij}=2W_{ijij}^+, \quad W_{ijik}=W_{ijkl}=0.
			\]
			Thus, by \eqref{hodgeweylplusmin}, we can obtain
			\begin{align*}
				W_{ijkl}W_{pqkl}W_{ijpq}&=16(W_{1212})^3+16(W_{1313})^2+16(W_{1414})^2=\\
				&=128\left[
				(W_{1212}^+)^3+(W_{1313}^+)^3+(W_{1414}^+)^3\right]=\\
				&=2\left[16(W_{1212}^+)^3+48(W_{1212}^+)(W_{1234}^+)^2+16(W_{1313}^+)^3+48(W_{1313}^+)(W_{1342}^+)^2\right.+\\ &\left.+16(W_{1414}^+)^3+48(W_{1414}^+)(W_{1423}^+)^2\right]=\\
				&=2W_{ijkl}^+W_{pqkl}^+W_{ijpq}^+;
			\end{align*}
			since $W_{ijkl}^+W_{pqkl}^+W_{ijpq}^+$ is a smooth function on $M$, its values do not depend on the choice of coordinate system, hence we get
			\[
			W_{ijkl}^+W_{pqkl}^+W_{ijpq}^+=5[\operatorname{tr}(A^3)]^2-14\operatorname{tr}(A^6)+\dfrac{19}{2}S\lvert A^2\rvert^2-\dfrac{23}{18}S^3.
			\]
			Inserting this into \eqref{integrweitz}, we obtain
			\begin{equation*}
				\int_M |\nabla\operatorname{W}^+|^2 +6c\left(\dfrac{7}{6}S^2
				-2\lvert A^2\rvert^2\right)dV_g=\int_M 15[\operatorname{tr}(A^3)]^2-42\mathrm{tr}(A^6)
				+\dfrac{55}{2}S
				\lvert A^2\rvert^2-\frac{13}{4}S^3dV_g,
			\end{equation*}
			hence deducing \eqref{sharpinequality}.
			Finally, it
			is clear that equality holds in the statement if and only 
			if the self-dual Weyl tensor of $(M,g)$ is parallel: in particular, this implies
			that the eigenvalues of $\operatorname{W}^+$ are constant
			functions on $M$, since $\lvert\operatorname{W^+}\rvert^2$ and $W_{ijkl}^+W_{pqkl}^+W_{ijpq}^+$ are constant \cite{Derdzinski} (recall that 
			$W_{ijkl}^+W_{pqkl}^+W_{ijpq}^+=24\det\operatorname{W}^+$, where $\operatorname{W}^+$ is identified with the self-dual Weyl operator $\mathcal{W}^+$). Hence, by \eqref{bochscalar}, 
			\[
			R\lvert\operatorname{W}^+\rvert^2=6W_{ijkl}^+W_{pqkl}^+W_{ijpq}^+,
			\]
			which allows us to deduce that $R$ is constant: 
			by [Theorem \ref{eigentheorem}, \ref{secondstateWeyl}], we conclude that $M$ is isoparametric. We assume that the eigenvalues of $A$ are simple: since it means that $A$ has four distinct eigenvalues everywhere, we know that $c$ must be equal to $1$ (\cite{cartan, levi1937, segre}). However, since all the eigenvalues have multiplicity $1$ on $M$, $A$ is locally diagonalizable: therefore, around any point $p$, we can write $A_{iit}=(\lambda_i)_t=0$ for every $i,t$. Taking the covariant derivatives of the components defined in \eqref{weylplusminusminimal}, we obtain
			\begin{align*}
				0=W_{1213,t}^+&=\dfrac{1}{4}\left[
				(\lambda_1-\lambda_4)A_{23t}+(\lambda_2-\lambda_3)A_{14t}\right]\\
				0=W_{1214,t}^+&=\dfrac{1}{4}\left[
				(\lambda_1-\lambda_3)A_{24t}+(\lambda_2-\lambda_4)A_{13t}\right]\\
				0=W_{1314,t}^+&=\dfrac{1}{4}\left[
				(\lambda_1-\lambda_2)A_{34t}+(\lambda_3-\lambda_4)A_{12t}\right];
			\end{align*}
			since $\lambda_i\neq\lambda_j$ if $i\neq j$, $A_{iit}=0$ and $A$ is a Codazzi tensor, it is easy to observe that $A_{ijk}=0$ for every $i,j,k$, i.e. $\nabla A\equiv 0$ on $M$. However, this is impossible, due to the results in \cite{cherndocarmokob,lawson}: hence, since Cartan's minimal hypersurfaces do not exist in $N^5(c)$ and $(M^4,g)$ is not locally conformally flat, we conclude that there are exactly two distinct principal curvatures, both with multiplicity two. By minimality and \eqref{ricconst}, we conclude that $(M^4,g)$ must be an Einstein manifold: therefore, since $(M^4,g)$ cannot be totally geodesic, by the results in \cite{Fialkow1938, ryan} we have that $c=1$ and, by Proposition \ref{localisometryprop}, $(M^4,g)$ is locally isometric to $\mathbb{S}^2(1/\sqrt{2})\times\mathbb{S}^2(1/\sqrt{2})$. 
		\end{proof}
			
		\begin{rem}
			In the constant scalar curvature case, by performing analogous computations and exploiting \eqref{simonsasquare}, we get the pointwise equality
			\begin{align}
				|\nabla \operatorname{W}|^2=&4\lvert\nabla A^2\rvert^2+8A_{ij}^2A_{ikt}A_{jkt}+8c\lvert A^2\rvert^2+14cS^2+\\
				&-\left(30[\operatorname{tr}(A^3)]^2
				-84\operatorname{tr}(A^6)+63S\lvert A^2\rvert^2
				-\dfrac{13}{2}S^3\right). \notag
			\end{align}
			Moreover, by \eqref{weylplusminusnormminimal}, it is straightforward to note that, if $c\neq 0$ and $\operatorname{W}\not\equiv 0$, \eqref{sharpinequality} provides a purely extrinsic sharp upper bound for the Weyl functional of a harmonic Weyl minimal hypersurface in $(N^5(c),g_N)$, i.e.
			\[
			c\int_M\lvert\operatorname{W}\rvert^2dV_g\leq \int_M5[\operatorname{tr}(A^3)]^2-14\operatorname{tr}(A^6)+\dfrac{55}{6}S\lvert A^2\rvert^2-\dfrac{13}{12}S^3dV_g.
			\]
		\end{rem}
		The last part of the equality case in Theorem \ref{harmweylinequality} can also be proven by means of the following:
		\begin{proposition} \label{isoparamhhw}
			Any minimal, isoparametric hypersurface with half harmonic Weyl curvature of a space form $(N^5(c),g_N)$ is Einstein or locally conformally flat. 
		\end{proposition}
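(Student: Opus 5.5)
We argue by cases on the number $m$ of distinct principal curvatures, which is constant on $M$ because the hypersurface is isoparametric. If some principal curvature has multiplicity at least three, Proposition \ref{nishi} gives $\operatorname{W}^+\equiv\operatorname{W}^-\equiv 0$, so $(M^4,g)$ is locally conformally flat. If $m=2$ with both multiplicities equal to two, minimality forces the two principal curvatures to be $\lambda$ and $-\lambda$. Then $A^2=\lambda^2 g$, and by \eqref{ricconst} we get $\operatorname{Ric}=(3c-\lambda^2)g$, so $(M^4,g)$ is Einstein (compare the end of Remark \ref{inequalities}). The case $m=3$ cannot occur: Cartan's minimal hypersurfaces with three principal curvatures do not exist in dimension four (see Remark \ref{remarklcf}). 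It therefore remains to rule out $m=4$ under the half harmonic Weyl hypothesis.

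Assume now that $m=4$ and, without loss of generality, that $\delta\operatorname{W}^+\equiv 0$. The four eigenvalues are simple and constant, so $A$ is smoothly diagonalizable near any point, and $A_{iik}=(\lambda_i)_k=0$ for all $i,k$. Together with the Codazzi symmetry, the only components of $\nabla A$ that can be nonzero are $A_{ijk}$ with $i,j,k$ pairwise distinct. For each $t$ we compute $W^+_{ijkl,t}$ from \eqref{weylplusminusminimal}, exactly as in the proof of Theorem \ref{harmweylinequality}. The coefficients of $\operatorname{W}^+$ depend on $S$, and on $A^2$ only through its derivative $A^2_{ij,t}=(\lambda_i+\lambda_j)A_{ijt}$. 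This gives expressions of the form
\[
W^+_{1213,t}=\tfrac14\left[(\lambda_1-\lambda_4)A_{23t}+(\lambda_2-\lambda_3)A_{14t}\right],
\]
and similarly for $W^+_{1214,t}$ and $W^+_{1314,t}$. The diagonal components have zero derivative.

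We then contract over $t$ and use \eqref{divergplusmin} to write the divergence conditions $W^+_{tijk,t}=0$ as linear equations in the unknowns $A_{123},A_{124},A_{134},A_{234}$, with coefficients that are differences $\lambda_a-\lambda_b$. For example, taking the free indices $(i,j,k)=(2,1,3)$ leaves only the $t=4$ term, which gives a relation such as
\[
(\lambda_1-\lambda_4)A_{234}+(\lambda_2-\lambda_3)A_{144}=(\lambda_1-\lambda_4)A_{234}=0,
\]
since $A_{144}=0$. Hence $A_{234}=0$, and the same argument gives $A_{123}=A_{124}=A_{134}=0$. We conclude that $\nabla A\equiv 0$.

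A parallel second fundamental form with $c\neq 1$ is impossible for a non-totally geodesic minimal hypersurface. With $c=1$, \eqref{simonsid} gives $S\equiv 4$, and Proposition \ref{localisometryprop} then forces $m=2$. Both contradict $m=4$, so this case does not occur.

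The main obstacle is this bookkeeping. We must check that each divergence equation involves essentially a single unknown $A_{ijk}$, because the summation over $t$ kills the terms with repeated indices. If instead the equations couple the unknowns, we must verify that the resulting $4\times4$ linear system has nonvanishing determinant whenever the $\lambda_i$ are distinct. If the direct computation becomes messy, an alternative route is to note that the half harmonic Weyl condition is equivalent to $C_{ijk}\pm\tfrac12\mu_{jkrs}C_{irs}=0$. For an isoparametric minimal hypersurface, $R$ is constant and $\operatorname{Ric}=3cg-A^2$, so the Cotton tensor reduces to $C_{ijk}=-(A^2_{ij,k}-A^2_{ik,j})$, which can be computed directly in the diagonal frame.
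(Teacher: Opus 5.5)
Your overall strategy works, but the one divergence component you display is miscomputed, and it is the step the whole proof rests on. With free indices $(2,1,3)$ one has $(\delta\operatorname{W}^+)_{213}=W^+_{t213,t}=W^+_{1213,1}+W^+_{3213,3}+W^+_{4213,4}$.

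\begin{itemize}
\item The $t=4$ term is the one that \emph{vanishes}. Indeed $W^+_{4213}=W^+_{1342}=W^+_{1313}$ is a diagonal entry of $\mathcal{W}^+$, and its derivative involves only $A_{11t}$ and $A_{33t}$.
\item The $t=1$ and $t=3$ terms survive. Since $W^+_{3213}=-W^+_{1314}$, they combine to $\frac14(\lambda_1-\lambda_4)A_{123}-\frac14(\lambda_3-\lambda_4)A_{123}=\frac14(\lambda_1-\lambda_3)A_{123}$.
\item The expression you wrote, $(\lambda_1-\lambda_4)A_{234}+(\lambda_2-\lambda_3)A_{144}$, is $4W^+_{1213,4}$. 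This is a component of $\nabla\operatorname{W}^+$, not of $\delta\operatorname{W}^+$. Setting it to zero amounts to assuming $\nabla\operatorname{W}^+=0$, which is the stronger hypothesis of the equality case of Theorem \ref{harmweylinequality}.
\end{itemize}

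So ``the same argument gives $A_{123}=A_{124}=A_{134}=0$'' is not justified as written. The claim itself is true. Done correctly, each component of $\delta\operatorname{W}^+$ is one eigenvalue difference times one $A_{ijk}$ with distinct indices; this is \eqref{divergweylplus} with $A_{iik}=0$. For example, $(\delta\operatorname{W}^+)_{112}=\frac14(\lambda_3-\lambda_4)A_{134}$ and $(\delta\operatorname{W}^+)_{114}=\frac14(\lambda_2-\lambda_3)A_{123}$, so no determinant is needed.

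Your Cotton-tensor fallback is the cleanest way to see this. In the eigenframe $C_{ijk}=-(\lambda_j-\lambda_k)A_{ijk}$, which vanishes when $i\in\{j,k\}$. Hence the condition $C_{ijk}+\frac12\mu_{jkrs}C_{irs}=0$ with $j=i$ gives $(\lambda_r-\lambda_s)A_{irs}=0$ for all pairwise distinct $i,r,s$.

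With that repair your argument is correct, and it differs from the paper's. The paper reads off from \eqref{divergweylplus} that $\lvert\delta\operatorname{W}^+\rvert=\lvert\delta\operatorname{W}^-\rvert$ for isoparametric hypersurfaces, so half harmonic Weyl is equivalent to $\delta\operatorname{W}=0$. Since $R$ is constant, this is harmonic curvature, and Umehara's theorem then gives $m\le 2$ at once. That route needs no case analysis and no appeal to Cartan or Proposition \ref{localisometryprop}.

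You instead replace Umehara by a direct proof that $\nabla A=0$, using the same mechanism the paper uses in the equality case of Theorem \ref{harmweylinequality}. You then dispose of parallel $A$ via Simons' identity. The cost is the separate case $m=3$: for $c\le 0$ you need Cartan's bound $m\le 2$, not the $\mathbb{S}^5$ remark you cite. You could drop Cartan altogether. With constant eigenvalues, $A_{ijk}=0$ whenever two indices lie in a common eigenspace, and the equations $(\lambda_r-\lambda_s)A_{irs}=0$ then force $\nabla A=0$ for $m=3$ as well.
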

		\begin{proof}
			An easy computation based on \eqref{divergplusmin} shows that, at every point, the components of $\delta\operatorname{W}^+$ can be written as
			\begin{align} \label{divergweylplus}
				(\delta\operatorname{W}^{\pm})_{112}&=\dfrac{1}{4}\left[
				(\lambda_1-\lambda_2)A_{112}\pm(\lambda_3-\lambda_4)A_{134}\right], \quad
				(\delta\operatorname{W}^{\pm})_{113}=\dfrac{1}{4}\left[
				(\lambda_1-\lambda_3)A_{113}\pm(\lambda_4-\lambda_2)A_{142}\right]\\
				(\delta\operatorname{W}^{\pm})_{114}&=\dfrac{1}{4}\left[
				(\lambda_1-\lambda_4)A_{114}\pm(\lambda_2-\lambda_3)A_{123}\right]\notag, \quad
				(\delta\operatorname{W}^{\pm})_{212}=\dfrac{1}{4}\left[
				(\lambda_1-\lambda_2)A_{212}\pm(\lambda_3-\lambda_4)A_{234}\right]\notag\\
				(\delta\operatorname{W}^{\pm})_{213}&=\dfrac{1}{4}\left[
				(\lambda_1-\lambda_3)A_{213}\pm(\lambda_4-\lambda_2)A_{242}\right]\notag, \quad
				(\delta\operatorname{W}^{\pm})_{214}=\dfrac{1}{4}\left[
				(\lambda_1-\lambda_4)A_{214}\pm(\lambda_2-\lambda_3)A_{223}\right]\notag\\
				(\delta\operatorname{W}^{\pm})_{312}&=\dfrac{1}{4}\left[
				(\lambda_1-\lambda_2)A_{312}\pm(\lambda_3-\lambda_4)A_{334}\right]\notag, \quad
				(\delta\operatorname{W}^{\pm})_{313}=\dfrac{1}{4}\left[
				(\lambda_1-\lambda_3)A_{313}\pm(\lambda_4-\lambda_2)A_{342}\right]\notag\\
				(\delta\operatorname{W}^{\pm})_{314}&=\dfrac{1}{4}\left[
				(\lambda_1-\lambda_4)A_{314}\pm(\lambda_2-\lambda_3)A_{323}\right]\notag, \quad
				(\delta\operatorname{W}^{\pm})_{412}=\dfrac{1}{4}\left[
				(\lambda_1-\lambda_2)A_{412}\pm(\lambda_3-\lambda_4)A_{434}\right]\notag\\
				(\delta\operatorname{W}^{\pm})_{413}&=\dfrac{1}{4}\left[
				(\lambda_1-\lambda_3)A_{413}\pm(\lambda_4-\lambda_2)A_{442}\right], \quad 
				(\delta\operatorname{W}^{\pm})_{414}=\dfrac{1}{4}\left[
				(\lambda_1-\lambda_4)A_{414}\pm(\lambda_2-\lambda_3)A_{423}\right]\notag.
			\end{align}
			Since $(M^4,g)$ is isoparametric, it is easy to see that 
			\[
			\lvert\delta\operatorname{W}^+\rvert=\lvert\delta\operatorname{W}^-\rvert,
			\]
			which, by the fact that $\nabla\operatorname{W}$ splits orthogonally into $\nabla\operatorname{W}^++\nabla\operatorname{W}^-$, implies that $(M^4,g)$ is half harmonic Weyl if and only if $\delta\operatorname{W}\equiv 0$. 
			Since $\delta\operatorname{W}=0$ and $R$ is constant, a result due to Umehara \cite{umehara} allows us to conclude that $m\leq 2$, i.e. $(M^4,g)$ is either Einstein or locally conformally flat, which proves the claim. 
		\end{proof}
		
		\begin{rem}
			We point out that, if $(M^4,g)$ a closed, minimally immersed hypersurface in $N^5(1)$ with constant scalar curvature, then, if $\delta\operatorname{W}^+=0$, there exists at least one point where $m\leq 3$. Indeed, if we assume that $m=4$ everywhere, by the fact that $A$ can be locally diagonalizable around every point we obtain that $A_{iit}=(\lambda_i)_t$: by \eqref{divergweylplus}, it is immediate to obtain the following local conditions on $\nabla A$
			\begin{align*}
				A_{123}&=\dfrac{\lambda_2-\lambda_4}{\lambda_1-\lambda_3}(\lambda_2)_4=\dfrac{\lambda_4-\lambda_3}{\lambda_1-\lambda_2}(\lambda_3)_4=\dfrac{\lambda_4-\lambda_1}{\lambda_2-\lambda_3}(\lambda_1)_4;\\
				A_{124}&=\dfrac{\lambda_3-\lambda_1}{\lambda_4-\lambda_2}(\lambda_1)_3=\dfrac{\lambda_3-\lambda_2}{\lambda_1-\lambda_4}(\lambda_2)_3=\dfrac{\lambda_4-\lambda_3}{\lambda_1-\lambda_2}(\lambda_4)_3;\\
				A_{134}&=\dfrac{\lambda_2-\lambda_1}{\lambda_3-\lambda_4}(\lambda_1)_2=\dfrac{\lambda_3-\lambda_2}{\lambda_1-\lambda_4}(\lambda_3)_2=\dfrac{\lambda_2-\lambda_4}{\lambda_1-\lambda_3}(\lambda_4)_2;\\
				A_{234}&=\dfrac{\lambda_2-\lambda_1}{\lambda_3-\lambda_4}(\lambda_2)_1=\dfrac{\lambda_3-\lambda_1}{\lambda_4-\lambda_2}(\lambda_3)_1=\dfrac{\lambda_4-\lambda_1}{\lambda_2-\lambda_3}(\lambda_4)_1.
			\end{align*}
			Since $M$ is closed, we can consider, for instance, a point $p\in M$ such that $\nabla\lambda_1=0$: by the previous equations and the fact that $M$ is minimal, at that point we have $\nabla A=0$. Since the hypersurface cannot be totally geodesic by hypothesis, Simons' identity \eqref{simonsid} immediately implies that $S\equiv n$, which contradicts Proposition \ref{localisometryprop}. 
			
			Also, we point out that, if $1<m\leq 3$ everywhere, then, by Theorem \ref{eigentheorem} and a well-known result by Derdzi\'{n}ski \cite{Derdzinski}, around every point $p\in M$ such that $\lvert \operatorname{W}^+\rvert\neq 0$ there exists an open neighborhood in which the metric $g$ is conformally K\"{a}hler. 
		\end{rem}
		Finally, we want to analyze a weaker condition, which is satisfied both by Einstein and half conformally flat metrics: in the closed, four-dimensional case, the critical points of the so-called \emph{Weyl functional}
		\[
		g\longmapsto \int_M\lvert\operatorname{W}\rvert^2dV_g
		\]
		are the ones who satisfy the Euler--Lagrange equations
		\begin{equation} \label{bachtensor}
			0=W_{ikjl,lk}+\dfrac{1}{2}R_{kl}W_{ikjl}=:B_{ij}.
		\end{equation}
		The functions $B_{ij}$ are the local components of the \emph{Bach tensor}
		$\operatorname{B}$ \cite{bach}, which enjoys a plenty of interesting properties 
		in four dimensions: for instance, $\operatorname{B}$ is 
		symmetric, trace-free in all dimensions, but in the four-dimensional
		case it is also divergence-free and \emph{conformally covariant}, since,
		under a pointwise conformal change of metric
		$\tilde{g}=e^{2u}g$, it transforms as
		\cite{CatinoMastroliaBook}
		\[
		e^{2u}\tilde{\operatorname{B}}=\operatorname{B}.
		\]
		Indeed, the Weyl functional is conformally invariant, hence its
		Euler--Lagrange equations are conformally invariant conditions:
		this implies that, if a metric $g$ is \emph{Bach-flat}, i.e. its
		Bach tensor identically vanishes, every Riemannian metric in 
		the conformal class $[g]$ is Bach-flat. 
		
		In the context of minimal hypersurfaces isometrically embedded into
		space forms, the Bach-flat condition allows us to prove some
		Bochner-type formulas, which lead to integral equalities and sharp
		bounds on the second fundamental form. 
		By \eqref{bachtensor}, it is possible to show that
		(see \cite{CatinoMastroliaBook} for higher dimensional cases):
		\begin{equation} \label{bachtensor2}
			2B_{ij}=\Delta R_{ij}-\dfrac{1}{3}R_{,ij}+
			2R_{kt}R_{ikjt}-\dfrac{2}{3}RR_{ij}-\dfrac{1}{6}\Delta R\delta_{ij}
			-\dfrac{1}{2}\left(\lvert\operatorname{Ric}\rvert^2
			-\dfrac{R^2}{3}\right)\delta_{ij}, \notag
		\end{equation}
		where $R_{,ij}$ are the components of $\operatorname{Hess}R$. 
		We recall that, in the proof of Theorem \ref{sharpquadrbound}, we also derived a Simons' type identity,
		which, for four-dimensional minimal hypersurfaces, becomes
		\begin{equation} \label{laplasquare}
			\Delta A_{ij}^2=2(4c-S)A_{ij}^2+2A_{ikt}A_{jkt}.
		\end{equation}
		By \eqref{ricconst}, \eqref{scalconst}, \eqref{simonsidlocal}, \eqref{simonsid} and \eqref{laplasquare}, we obtain
		\begin{align} \label{bachhyper}
			2B_{ij}&=2A_{ij}^4-2A_{ij}\operatorname{tr}(A^3)-4cA_{ij}^2+\dfrac{4}{3}SA_{ij}^2+\dfrac{1}{3}S_{ij}-2A_{ikt}A_{jkt}+\\
			&+\left(\dfrac{1}{3}\lvert\nabla A\rvert^2+\dfrac{1}{3}cS-\dfrac{1}{6}S^2-\dfrac{1}{2}\lvert A^2\rvert^2\right)\delta_{ij}, \notag
		\end{align}
		where $S_{ij}$ are the local components of $\operatorname{Hess} S$.
		Now, we are ready to show a rigidity integral result for Bach-flat hypersurfaces (Theorem \ref{bachident}).
		\begin{proof}[Proof of Theorem \ref{bachident}] 
			By hypothesis, $g$ is a Bach-flat metric,
			hence, the left-hand side of 
			\eqref{bachhyper} vanishes. 
			Now, we contract \eqref{bachhyper} with $A_{ij}$ to obtain
			\[
			0=-A_{ij}\Delta A_{ij}^2+
			2\operatorname{tr}(A^5)+4c\operatorname{tr}(A^3)-
			\dfrac{8}{3}S\operatorname{tr}(A^3);
			\]
			using \eqref{laplasquare} in the previous equality, we get
			\eqref{firstbochnerbach}. 
			
			Now, using integration by parts and renaming the indices, we obtain
			\begin{align*}
				\int_MA_{ij}A_{ikl}A_{jkl}dV_g&=-
				\int_MA_{ijl}A_{ik}A_{jkl}dV_g-\int_MA_{ij}A_{ik}\Delta A_{jk}dV_g=\\
				&=-\int_MA_{ij}A_{ikl}A_{jkl}dV_g-\left(4c-S\right)
				\int_M\operatorname{tr}(A^3)dV_g,
			\end{align*}
			that is,
			\[
			\int_MA_{ij}A_{ikl}A_{jkl}dV_g=-\dfrac{1}{2}\left(4c-S\right)
			\int_M\operatorname{tr}(A^3)dV_g;
			\]
			furthermore, since $A$ is a Codazzi trace-free tensor, we get
			\[
			\int_MA_{ij}S_{ij}=-\int_MA_{ijj}S_i=0.
			\]
			Integrating \eqref{firstbochnerbach} and using these facts,
			we immediately obtain the integral identity
			\[
			\int_M\operatorname{tr}(A^5)dV_g=\dfrac{5}{6}\int_M S\operatorname{tr}(A^3)dV_g;
			\]
			hence, the validity of \eqref{bachintegral1} and the characterization of the equality case follow directly from  \eqref{lagrange}. 
			
			Now, let us contract \eqref{bachhyper} with $A_{ij}^2$ in order
			to have
			\begin{align*}
				0&=2\operatorname{tr}(A^6)
				-2\left[\operatorname{tr}(A^3)\right]^2-4c\lvert A^2\rvert^2+
				\dfrac{5}{6}S\lvert A^2\rvert^2-
				\dfrac{1}{6}S^3+\dfrac{1}{3}cS^2+\\
				&+\dfrac{1}{3}\lvert\nabla A\rvert^2+\dfrac{1}{3}A_{ij}^2S_{ij}-2A_{ij}^2A_{ikt}A_{jkt};
			\end{align*}
			by a straightforward application of \eqref{simonsid} and \eqref{simonsasquare}, we obtain \eqref{secondbochnerbach}.
			
			Now, we can integrate \eqref{secondbochnerbach}: first, a simple integration by parts, combined with \eqref{simons2} and the fact that 
			$A$ is Codazzi, shows that
			\[
			\int_MA_{ij}^2S_{ij}\,dV_g=-\int_M
			(A_{it}A_{tj})_jS_i\,dV_g=-\int_M
			S_iA_{ijt}A_{jt}\,dV_g=
			\int_M\left(SA_{ijti}A_{jt}+S\lvert\nabla A\rvert^2\right)dV_g.
			\]
			Now, since $M$ is minimal, we can apply a well-known commutation formula for the Hessian of $(0,2)$-tensor fields
			(see e.g. \cite{CatinoMastroliaBook}) in order to obtain, by \eqref{riemconst} and \eqref{ricconst},
			\begin{align*}
				A_{ijti}&=A_{ijit}+A_{pj}R_{pt}+A_{ip}R_{pjti}=\\
				&=A_{pj}(3c\delta_{pt}-A_{pt}^2)+
				A_{ip}[c(\delta_{pt}\delta_{ji}-\delta_{pi}\delta_{jt}+A_{pt}A_{ji}-A_{pi}A_{jt}]=\\
				&=(4c-S)A_{jt};
			\end{align*}
			this, by \eqref{simons2}, immediately implies that
			\[
			\int_M A_{ij}^2S_{ij}=\int_M \left[S\lvert\nabla A\rvert^2+S^2(4c-S)\right]dV_g=
			\int_M\left[\dfrac{1}{4}\Delta S^2-\dfrac{1}{2}\lvert\nabla S\rvert^2\right]dV_g=-\dfrac{1}{2}\int_M\lvert\nabla S\rvert^2dV_g.
			\]
			Now, we can exploit
			the first inequality in \eqref{ineq} and \eqref{lagrange} to deduce \eqref{bachintegral2}. 
			
			Finally, equality holds in \eqref{bachintegral2} if and only if
			equalities hold in the right-hand side of \eqref{ineq} and in \eqref{lagrange} at every point $p\in M$, which means that
			$(M,g)$ is a locally conformally flat manifold, by Proposition \ref{nishi}.
		\end{proof}
		\begin{rem}
			We highlight the fact that \eqref{secondbochnerbach} is the
			extrinsic version of the Bochner--Weitzenb\"{o}ck formula
			for Bach-flat metrics proven in \cite{Chang-gursky-yang-annals}.
		\end{rem}
		Now, we can prove an integral equality in the locally conformally
		flat case, derived from Theorem \ref{bachident}.
		\begin{proof}[Proof of Corollary \ref{corbachlcf}]
			The claim follows immediately by the fact that equality holds in \eqref{bachintegral2} and
			that, by Proposition \ref{nishi}, at every point there are three equal principal
			curvatures: hence, if they are $-3\lambda$, $\lambda$, $\lambda$, $\lambda$, it is 
			immediate to see that
			\[
			\lambda=\pm\sqrt{\dfrac{S}{12}} \quad \Longrightarrow \quad \operatorname{tr}(A^6)=\dfrac{61}{144}S^3.
			\]
			Inserting this into \eqref{bachintegral2} and using the equality in the right-hand side of \eqref{ineq},
			the claim is proven. 
		\end{proof}
		
		We finally point out that, using \eqref{ineq} and \eqref{lagrange}, by Theorem \ref{harmweylinequality} and Theorem \ref{bachident} we can obtain integral rigidity results for half harmonic Weyl and Bach-flat minimal hypersurfaces (Corollary \ref{rigresultintegral}): 
		in particular, we prove a strict inequality for the second fundamental form
		in the case of half harmonic Weyl metrics (\eqref{strictharmweyl}) and a characterization of totally geodesic hypersurfaces in positive space forms,
		under the Bach-flat hypothesis (\eqref{totgeodbach}).
		
		
		\begin{proof}[Proof of Corollary \ref{rigresultintegral}]
			\begin{enumerate}
				\item We assume that $(M^4,g)$ is not locally conformally flat: by \eqref{sharpinequality} and 
				\eqref{lagrange} we get 
				\[
				c\int_M\left(\dfrac{7}{3}S^2-4\lvert A^2\rvert^2\right)dV_g \leq \int_M\left(\dfrac{427}{72}S^3-14\operatorname{tr}(A^6)\right)dV_g.
				\]
				In order to prove \eqref{strictharmweyl}, 
				we need to show that the previous inequality
				is strict, but this is trivial: indeed, if 
				equality holds, it has to hold in 
				\eqref{lagrange} at every $p\in M$, which means
				that at least 
				three principal curvatures must be equal at every $p\in M$. Therefore, Proposition \ref{nishi} would imply that $(M^4,g)$ is locally conformally
				flat, contradicting our assumption. 
				\item The validity of \eqref{totgeodbach} is deduced from the fact that, since $c=1$, by the left-hand side of \eqref{ineq} the last term in the right-hand side of \eqref{bachintegral2} is non-positive: hence the inequality is proven. Finally, by \eqref{ineq} and Theorem \ref{bachident}, equality holds if and only if $(M^4,g)$ is Einstein and locally conformally flat, which means that $(M^4,g)$ is a totally geodesic positive space form.
			\end{enumerate}
		\end{proof}
		\bibliographystyle{abbrv}
		\bibliography{bibliography}
		
		
		
		

		\end{document}